\documentclass[a4paper,12pt,reqno,english]{amsart}
\usepackage[utf8]{inputenc}
\usepackage[T1]{fontenc}
\usepackage{babel}

\usepackage{amssymb}

\usepackage[bookmarks = false, colorlinks, citecolor = blue, urlcolor = blue]{hyperref}
\usepackage[titletoc,toc,title]{appendix}
\usepackage[capitalise, nameinlink]{cleveref}
\usepackage{tikz}


\setlength{\topmargin}{-0.43cm}
\setlength{\textheight}{23.3cm}
\setlength{\oddsidemargin}{-0.39cm}
\setlength{\evensidemargin}{-0.39cm}
\setlength{\textwidth}{16.7cm}


\numberwithin{equation}{section}
\numberwithin{figure}{section}


\theoremstyle{plain}
\newtheorem{theorem}{Theorem}[section]

\theoremstyle{plain}
\newtheorem*{theorem*}{Theorem}

\theoremstyle{plain}
\newtheorem{proposition}[theorem]{Proposition}

\theoremstyle{plain}
\newtheorem{lemma}[theorem]{Lemma}

\theoremstyle{plain}
\newtheorem{corollary}[theorem]{Corollary}

\theoremstyle{definition}
\newtheorem{definition}[theorem]{Definition}

\theoremstyle{definition}
\newtheorem{notation}[theorem]{Notation}

\theoremstyle{definition}

\theoremstyle{remark}
\newtheorem{remark}[theorem]{Remark}


\newcommand{\extm}[1]{
\mathfrak{e}_{#1}
}

\newcommand{\intm}[1]{
\mathfrak{i}_{#1}
}

\newcommand{\Uql}{U_q(\mathfrak{l})}
\newcommand{\Lambdaq}{\Lambda_q(\mathfrak{u}_+)}
\newcommand{\Lambdak}{\Lambda^k_q(\mathfrak{u}_+)}

\begin{document}

\title[On the Dolbeault-Dirac operators on quantum projective spaces]{On the Dolbeault-Dirac operators on quantum projective spaces}

\author{Marco Matassa}%

\email{marco.matassa@gmail.com, marco.matassa@math.univ-bpclermont.fr}

\address{Université Clermont Auvergne, CNRS, LMBP, F-63000 Clermont-Ferrand, France.}

\begin{abstract}
We consider Dolbeault–Dirac operators on quantum projective spaces, following Krähmer and Tucker-Simmons.
The main result is an explicit formula for their squares, up to terms in the quantized Levi factor, which can be expressed in terms of some central elements.
This computation is completely algebraic.
These operators can also be made to act on appropriate Hilbert spaces. Using the formula mentioned above, we easily find that they have compact resolvent, thus obtaining a result similar to that of D'Andrea and Dąbrowski.
\end{abstract}

\maketitle

\section{Introduction}

The aim of this paper is to provide a detailed study of a class of Dolbeault–Dirac operators on quantized symmetric spaces introduced in \cite{qflag2}.
We will consider the case of quantum projective spaces and give a detailed computation of the square of these operators.
The main result is that they can be expressed essentially in terms of certain central elements, which in the classical limit reduce to the usual quadratic Casimirs.
This can be seen as a quantum analogue of the classical Parthasarathy formula \cite{partha}.
As a corollary of this formula, we easily obtain a result similar to the one appearing in \cite{dd-proj}, namely that these operators have compact resolvent when acting on the appropriate Hilbert spaces.

First we give a brief survey of some of the relevant literature.
Important results on quantized irreducible generalized flag manifolds, of which projective spaces are examples, were obtained by Heckenberger and Kolb in \cite{flagcalc1, flagcalc2}: they show that these spaces admit a canonical $q$-analogue of the de Rham complex, with the homogenous components having the same dimensions as in the classical case.
The first-order differential calculi coming from this construction can be implemented by commutators with Dirac operators, in the sense of spectral triples, as shown by Krähmer in \cite{qflag}.
However, due to the implicit nature of this construction, he could not prove that these operators have compact resolvent in general. Recall that this is an important requirement for a spectral triple.
This difficulty motivated the subsequent construction of Krähmer and Tucker-Simmons in \cite{qflag2}: they connect this approach with the theory of the braided symmetric and exterior algebras of Berenstein and Zwicknagl \cite{bezw}, as a first step towards proving a quantum analogue of the Parthasarathy formula.
For the case of quantum projective spaces, Dirac operators defining spectral triples were introduced by D'Andrea and Dąbrowski in \cite{dd-proj}, generalizing previous results for the Podleś sphere \cite{ds-pod, twisted-podles} and for the quantum projective plane \cite{ddl-plane}.

In this paper we therefore revisit the construction of \cite{dd-proj}, connecting it with the general framework developed in \cite{qflag2}.
There are important differences, however.
First of all we avoid some seemingly ad-hoc choices made in \cite{dd-proj}, by working within a general scheme. Secondly, and most importantly, our results are more general. This is because of the algebraic setting we adopt, which defines the Dolbeault–Dirac operator $D$ as an element of $U_{q}(\mathfrak{g}) \otimes \mathrm{Cl}_{q}$. Since the operator does not act on a Hilbert space from the outset, the cancellations in the computation of $D^2$ take place at this more abstract level.

To obtain these results, it will be necessary to make some minor modifications to the setup of \cite{qflag2}.
This is because of some normalizations which are fixed in the cited paper, which create certain issues even in the classical limit.
Throughout this paper we will work in full generality in this respect.
As we will see at the end, certain choices will be singled out by the requirement that $D^{2}$ should take a simple form, as in the classical case.

The structure of the paper is as follows. In \cref{sec:notation} we fix our notations and conventions.
In \cref{sec:classical} we review the classical picture of Dolbeault–Dirac operators and the computation of their squares.
In \cref{sec:algebras} we introduce the relevant quantum symmetric and exterior algebras.
In \cref{sec:pairing} we discuss the choices for the dual pairings and Hermitian inner products.
In \cref{sec:clifford} we introduce the quantum Clifford algebra and give commutation relations for its generators.
In \cref{sec:root} we describe the quantum root vectors corresponding to the radical roots, together with their commutation relations.
In \cref{sec:dolbeault} we prove our main result on the squares of Dolbeault–Dirac operators on quantum projective spaces.
Finally in \cref{sec:casimir} we give some results related to the central elements appearing in the computation.

\section{Notations and conventions}
\label{sec:notation}

In this section we fix most of our notations and conventions and briefly review some facts about simple Lie algebras, parabolic subalgebras and quantized enveloping algebras.

\subsection{Parabolic subalgebras}

Let $\mathfrak{g}$ be a finite-dimensional complex simple Lie algebra with a fixed Cartan subalgebra $\mathfrak{h}$.
We denote by $\Delta(\mathfrak{g})$ the root system, by $\Delta^{+}(\mathfrak{g})$ a choice of positive roots and by $\Pi = \{ \alpha_{1}, \cdots, \alpha_{r} \}$ the simple roots.
Denote by $(\cdot, \cdot)$ the symmetric bilinear form on $\mathfrak{h}^{*}$ induced by the Killing form.
In the simply-laced case we have $(\alpha_{i}, \alpha_{j}) = a_{ij}$, where $a_{i j}$ are the entries of the Cartan matrix, and moreover $a_{i j} \in \{-1, 0\}$ for $i \neq j$.

For parabolic subalgebras we will follow the presentation given in \cite[Section 2.2]{qflag2}.
Let $S \subset \Pi$ be a subset of the simple roots.
Then we set
$$
\Delta(\mathfrak{l}) = \mathrm{span}(S) \cap \Delta(\mathfrak{g}), \quad
\Delta(\mathfrak{u}_{+}) = \Delta^{+}(\mathfrak{g}) \backslash \Delta^{+}(\mathfrak{l}).
$$
In terms of these roots we define
$$
\mathfrak{l} = \mathfrak{h} \oplus \bigoplus_{\alpha \in \Delta(\mathfrak{l})} \mathfrak{g}_{\alpha}, \quad
\mathfrak{u}_{\pm} = \bigoplus_{\alpha \in \Delta(\mathfrak{u}_{+})} \mathfrak{g}_{\pm \alpha}, \quad
\mathfrak{p} = \mathfrak{l} \oplus \mathfrak{u}_{+}.
$$
It follows from the definitions that $\mathfrak{l}$ and $\mathfrak{u}_{\pm}$ are Lie subalgebras of $\mathfrak{g}$.
We call $\mathfrak{p}$ the \emph{standard parabolic subalgebra} associated to $S$ (we will omit the dependence on $S$ in the following). The subalgebra $\mathfrak{l}$ is reductive and is called the \emph{Levi factor} of $\mathfrak{p}$, while $\mathfrak{u}_{+}$ is a nilpotent ideal of $\mathfrak{p}$ called the \emph{nilradical}. We refer to the roots of $\Delta(\mathfrak{u}_{+})$ as the \emph{radical roots}.

We will consider \emph{cominuscule} parabolics: these have the property that all radical roots contain a certain simple root $\alpha_t$ with multiplicity $1$.
For these parabolic subalgebras we have the commutation relations $[\mathfrak{u}_{+}, \mathfrak{u}_{-}] \subset \mathfrak{l}$.
This follows from the general commutation relations $[E_\alpha, E_\beta] = c_{\alpha, \beta} E_{\alpha + \beta}$, together with the fact that $\alpha_t$ appears with multiplicity $1$.

The adjoint action of $\mathfrak{p}$ on $\mathfrak{g}$ descends to an action on $\mathfrak{g} / \mathfrak{p}$. The decomposition $\mathfrak{g} = \mathfrak{u}_{-} \oplus \mathfrak{p}$ gives $\mathfrak{g} / \mathfrak{p} \cong \mathfrak{u}_{-}$ as $\mathfrak{l}$-modules.
With respect to the Killing form of $\mathfrak{g}$ both $\mathfrak{u}_{+}$ and $\mathfrak{u}_{-}$ are isotropic and we have $\mathfrak{l} = \mathfrak{u}^{\perp}$, where $\mathfrak{u} = \mathfrak{u}_{+} \oplus \mathfrak{u}_{-}$.
The pairing $\mathfrak{u}_{-} \otimes \mathfrak{u}_{+} \to \mathbb{C}$ coming from the Killing form is non-degenerate, so that $\mathfrak{u}_{-}$ and $\mathfrak{u}_{+}$ are dual as $\mathfrak{l}$-modules.

\subsection{Quantized enveloping algebras}

We use \cite{klsc} as general reference for this part, even though we keep the conventions of \cite{qflag2}, which use the opposite coproduct.
The \emph{quantized universal enveloping algebra} $U_q(\mathfrak{g})$, for $q \in \mathbb{C}$, is generated by the elements $E_i$, $F_i$, $K_i$, $K_i^{-1}$ with $1\le i\le r$ and $r$ the rank of $\mathfrak{g}$, satisfying certain relations given below. We will only consider the simply-laced case, where the relations take a simpler form. These are
\begin{gather*}
K_i K_i^{-1} = K_i^{-1} K_i=1,\ \
K_i K_j = K_j K_i, \\
K_i E_j K_i^{-1} = q_i^{a_{ij}} E_j,\ \
K_i F_j K_i^{-1} = q_i^{-a_{ij}} F_j, \\
E_i F_j - F_j E_i = \delta_{ij} \frac{K_i - K_i^{-1}}{q_i - q_i^{-1}},
\end{gather*}
plus the quantum analogue of the Serre relations
\begin{gather*}
E_{i} E_{j} - E_{j} E_{i} = 0, \ a_{ij} = 0, \quad
E_{i}^{2} E_{j} - (q + q^{-1}) E_{i} E_{j} E_{i} + E_{j} E_{i}^{2} = 0, \ a_{ij} = - 1, \\
F_{i} F_{j} - F_{j} F_{i} = 0, \ a_{ij} = 0, \quad
F_{i}^{2} F_{j} - (q + q^{-1}) F_{i} F_{j} F_{i} + F_{j} F_{i}^{2} = 0, \ a_{ij} = - 1.
\end{gather*}
It admits a Hopf algebra structure given by
\begin{gather*}
\Delta(K_i)=K_i\otimes K_i, \quad
\Delta(E_i)=E_i\otimes1+ K_i\otimes E_i, \quad
\Delta(F_i)=F_i\otimes K_i^{-1}+1\otimes F_i, \\
S(K_{i}) = K_{i}^{-1}, \quad
S(E_{i}) = - K_{i}^{-1} E_{i}, \quad
S(F_{i}) = - F_{i} K_{i}, \quad
\varepsilon(K_i)=1, \quad
\varepsilon(E_i)=\varepsilon(F_i)=0.
\end{gather*}
For $q \in \mathbb{R}$ the \emph{compact real form} is defined by
\[
K_{i}^{*} = K_{i} \quad
E_{i}^{*} = K_{i} F_{i}, \quad
F_{i}^{*} = E_{i} K_{i}^{-1}.
\]
We will also use the $q$-numbers, which are defined by $[x] = (q^x - q^{-x}) / (q - q^{-1})$ for $x \in \mathbb{R}$, and their factorials, which are defined by $[n]! = [n] [n - 1] \cdots [1]$ for $n \in \mathbb{N}$.

The finite-dimensional irreducible (Type 1) representations of $U_{q}(\mathfrak{g})$ are labelled by their highest weights $\Lambda$ as in the classical case. We denote these modules by $V(\Lambda)$. For $q \in \mathbb{R}$ there is a Hermitian inner product $(\cdot, \cdot)$ on $V(\Lambda)$, unique up to a positive scalar factor, which is compatible with the compact real form in the sense that
\[
(a v, w) = (v, a^{*} w), \quad
v, w \in V(\Lambda), \ a \in U_{q}(\mathfrak{g}).
\]
We also need the braiding on the category of \emph{Type 1 representations}.
Given two $U_{q}(\mathfrak{g})$-modules $V$ and $W$ of this type, the \emph{braiding} is defined by $\widehat{R}_{V, W} = \tau \circ R_{V, W}$.
Here $\tau$ denotes the flip and $R_{V, W}$ is the specialization of the universal $R$-matrix to the modules $V$ and $W$.

Finally we will need the definition of the \emph{quantized Levi factor} $U_q(\mathfrak{l})$.
Recall that corresponding to the choice of a subset of simple roots $S \subset \Pi$ we have the Levi factor $\mathfrak{l}$, a subalgebra of $\mathfrak{g}$. Then the quantized enveloping algebra of the Levi factor is defined as
\[
U_q(\mathfrak{l}) = \{ \textrm{subalgebra of $U_q(\mathfrak{g})$ generated by $K_i^{\pm 1}$ and $E_j, F_j$ with $j \in S$} \}.
\]
This definition of the quantized Levi factor is given for example in \cite[Section 4]{quantum-flag}.

\section{The classical picture}
\label{sec:classical}

In this section we briefly review the classical picture of the Dolbeault–Dirac operator.
We will also outline the computation of its square, obtaining in this way a rough version of the general Parthasarathy formula \cite{partha}.
The main point here is to stress the crucial steps in the derivation.
This will provide some important guidance for the quantum case.

\subsection{Dolbeault–Dirac operator}
A pair $(\mathfrak{g}, \mathfrak{p})$, where $\mathfrak{g}$ is a complex semisimple Lie algebra and $\mathfrak{p}$ is a parabolic Lie subalgebra, provides an infinitesimal description of the complex manifold $G/P$.
Here $G$ is the (connected, simply-connected) Lie group with Lie algebra $\mathfrak{g}$ and $P$ is the subgroup corresponding to $\mathfrak{p}$.
These spaces are referred to as \emph{generalized flag manifolds} and include the family of projective spaces.
Being complex manifolds, we have the corresponding Dolbeault operators acting on complex differential forms.

Here we will consider the classical analogue of the operator $\eth$ introduced in \cite{qflag2}.
Note that its definition makes use of the parabolic subalgebra $\mathfrak{q} = \mathfrak{l} \oplus \mathfrak{u}_{-}$, instead of the standard one $\mathfrak{p}$.
The corresponding manifolds $G/Q$ and $G/P$ are diffeomorphic as real manifolds, but the induced complex structures are inverse to each other.
The upshot is that in this case the holomorphic tangent space can be identified with $\mathfrak{u}_{+}$ as an $\mathfrak{l}$-module.
For more details on the classical geometric picture we refer to \cite[Section 7]{qflag2}.

Classically $\eth$ coincides with the adjoint of the Dolbeault operator $\bar{\partial}: \Omega^{(0, k)} \to \Omega^{(0, k + 1)}$ acting on anti-holomorphic forms. The adjoint is taken with respect to an invariant Hermitian inner product.
After making the relevant identifications, we can consider $\eth$ as an element of $U(\mathfrak{g}) \otimes \mathrm{Cl}$.
Here $U(\mathfrak{g})$ is the enveloping algebra of $\mathfrak{g}$ and $\mathrm{Cl}$ is the (complex) Clifford algebra of $\mathfrak{u}_{+} \oplus \mathfrak{u}_{-}$, which naturally acts on the exterior algebra $\Lambda(\mathfrak{u}_{+})$.
Note that we consider $\mathfrak{u}_-$ as dual to $\mathfrak{u}_+$ and hence the symmetric bilinear form is the dual pairing.
Explicitely
\[
\eth = \sum_{\xi_{i} \in \Delta(\mathfrak{u}_{+})} E_{\xi_{i}} \otimes \intm{i}.
\]
Here $\{E_{\xi_{i}}\}$ are the root vectors corresponding to the radical roots $\{\xi_{i}\}$, while $\intm{i}$ denotes interior multiplication with respect to the basis of $\mathfrak{u}_{-}$ dual to the root vectors.

The \emph{Dolbeault–Dirac operator} is then defined as
\[
D = \eth + \eth^{*} \in U(\mathfrak{g}) \otimes \mathrm{Cl}.
\]
Here $*$ denotes the adjoint, which algebraically is implemented as a $*$-structure on $U(\mathfrak{g}) \otimes \mathrm{Cl}$. More explicitely, on the $U(\mathfrak{g})$ factor it corresponds to the $*$-structure induced by the compact real form of $\mathfrak{g}$, while on $\mathrm{Cl}$ it is induced by the choice of a Hermitian inner product on $\Lambda(\mathfrak{u}_{+})$.
It can be seen that $D$ acts, up to a scalar, as the Dolbeault–Dirac operator on $G/Q$ formed with respect to the canonical $\mathrm{spin}^{c}$ structure, see \cite[Section 3.4]{dirac-book}.

\subsection{Computation of $D^{2}$}

We will now discuss a simple way to compute the square of $D$ on an irreducible generalized flag manifold $G / P$ (strictly speaking $G / Q$, in our conventions).
To make our life easier, we choose the root vectors in such a way that $B(E_{\xi_{i}}, F_{\xi_{i}}) = 1$ with respect to the Killing form.
Moreover we can choose an orthonormal basis of $\mathfrak{u}_{+}$ in such a way that the adjoint of $\intm{i}$, with respect to the Hermitian inner product on $\Lambda(\mathfrak{u}_{+})$, is given by exterior multiplication, which we denote by $\extm{i}$.
In this way we have
$$
D = \sum_{i} E_{\xi_{i}} \otimes \intm{i} + \sum_{i} F_{\xi_{i}} \otimes \extm{i}.
$$
We have $\eth^{2} = 0$, as a consequence of $\bar{\partial}^{2} = 0$. Then we obtain
$$
D^{2} = \sum_{i, j} E_{\xi_{i}} F_{\xi_{j}} \otimes \intm{i} \extm{j} + \sum_{i, j} F_{\xi_{i}} E_{\xi_{j}} \otimes \extm{i} \intm{j}.
$$
For any elements $A, B \in \mathfrak{g}$, we will use the notation $A \sim B$ if $A = B + C$ with $C \in \mathfrak{l}$.
Recall that we have the commutation relation $[\mathfrak{u}_{+}, \mathfrak{u}_{-}] \subset \mathfrak{l}$. Therefore $[E_{\xi_{i}}, F_{\xi_{j}}] \in \mathfrak{l}$ and so we can write $F_{\xi_{j}} E_{\xi_{i}} \sim E_{\xi_{i}} F_{\xi_{j}}$.
Therefore after relabeling we get
$$
D^{2} \sim \sum_{i, j} E_{\xi_{i}} F_{\xi_{j}} \otimes (\intm{i} \extm{j} + \extm{j} \intm{i}).
$$
Write $D^{2} = D^{2}_{D} + D^{2}_{O}$ depending on whether $i = j$ or $i \neq j$.
We have $\intm{i} \extm{j} + \extm{j} \intm{i} = \delta_{i,j} \mathrm{id}$, since we are using dual bases.
Using these relations we immediately conclude that
$$
D^{2}_{D} \sim \sum_{i} E_{\xi_{i}} F_{\xi_{i}} \otimes 1, \quad
D^{2}_{O} \sim 0.
$$
Since we chose $B(E_{\xi_{i}}, F_{\xi_{i}}) = 1$, we have that $C \sim \sum_{i} E_{\xi_{i}} F_{\xi_{i}}$, where $C$ is the quadratic Casimir of $\mathfrak{g}$.
Therefore we conclude that $D^{2} \sim C \otimes 1$.
Note that this result holds algebraically for $D = \eth + \eth^{*} \in U(\mathfrak{g}) \otimes \mathrm{Cl}$, without making reference to its action on the spinor bundle.

\subsection{Projective spaces}

Projective spaces provide a class of examples of generalized flag manifolds.
Since they will be our main concern in this paper, we fix here our conventions for them.
In the case of the $N$-dimensional projective space our setting is the following.
We consider the simple Lie algebra $\mathfrak{g} = \mathfrak{sl}_{N + 1}$ and remove the simple root $\alpha_{N}$, so that $S = \Pi \backslash \{\alpha_{N}\}$.
Then the Levi factor is $\mathfrak{l} = \mathfrak{gl}_{N}$, with the semisimple part being $\mathfrak{k} = \mathfrak{sl}_{N}$.
The nilradical $\mathfrak{u}_{+}$ can be identified with the fundamental representation of $\mathfrak{sl}_{N}$ as an $\mathfrak{l}$-module.

\begin{remark}
Another possible choice is to remove the simple root $\alpha_{1}$. These two choices are related by a Dynkin diagram automorphism, so that they basically amount to the same thing.
There is a corresponding Hopf algebra automorphim also in the quantum setting.
Therefore in the following we will only consider  the case where we remove the simple root $\alpha_{N}$.
\end{remark}

\section{Symmetric and exterior algebras}\label{sec:algebras}

We start by defining the \emph{quantum symmetric algebra} of $\mathfrak{u}_{+}$, following \cite{bezw}, as
$$
S_{q}(\mathfrak{u}_{+}) = T(\mathfrak{u}_{+}) / \langle \ker (\sigma_{\mathfrak{u}_{+}, \mathfrak{u}_{+}} + \mathrm{id}) \rangle.
$$
Here $T(\mathfrak{u}_{+})$ is the tensor algebra and $\sigma_{\mathfrak{u}_{+}, \mathfrak{u}_{+}}$ is the commutor coming from the coboundary stucture, see \cite{qflag2}. We do not need the precise definition of this structure, since concretely $\ker (\sigma_{\mathfrak{u}_{+}, \mathfrak{u}_{+}} + \mathrm{id})$ is the span of the eigenspaces of the braiding $\widehat{R}_{\mathfrak{u}_{+}, \mathfrak{u}_{+}}$ with \emph{negative} eigenvalues.
Similarly we define the the \emph{quantum exterior algebra} of $\mathfrak{u}_{+}$ as
$$
\Lambda_{q}(\mathfrak{u}_{+}) = T(\mathfrak{u}_{+}) / \langle \ker (\sigma_{\mathfrak{u}_{+}, \mathfrak{u}_{+}} - \mathrm{id}) \rangle.
$$
Here $\ker (\sigma_{\mathfrak{u}_{+}, \mathfrak{u}_{+}} - \mathrm{id})$ coincides with the span of the eigenspaces of the braiding $\widehat{R}_{\mathfrak{u}_{+}, \mathfrak{u}_{+}}$ with \emph{positive} eigenvalues.
In this section we will find explicit relations for these algebras.

\subsection{The R-matrix}

We will consider the case of the fundamental representation of $U_{q}(\mathfrak{sl}_{N})$.
The corresponding $R$-matrix (up to rescaling) is given in \cite[Section 8.4.2]{klsc} as
\[
R = q \sum_{i = 1}^{N} E_{ii} \otimes E_{ii} + \sum_{i \neq j} E_{ii} \otimes E_{jj} + (q - q^{-1}) \sum_{i > j} E_{ij} \otimes E_{ji}.
\]
Here $E_{ij}$ is the matrix which is equal to $1$ in the $(i,j)$-position and $0$ otherwise. This is with respect to a weight basis $\{ e_{i} \}_{i = 1}^{N}$, where $e_{1}$ is the highest weight vector.
The braiding is given by $\widehat{R} = \tau \circ R$, where $\tau$ is the flip. It follows
that for $\widehat{R}$ we have
\begin{gather*}
\widehat{R} (e_{i} \otimes e_{i}) = q e_{i} \otimes e_{i}, \quad
\widehat{R} (e_{i} \otimes e_{j}) = e_{j} \otimes e_{i}, \quad i > j,\\
\widehat{R} (e_{i} \otimes e_{j}) = e_{j} \otimes e_{i} + (q - q^{-1}) e_{i} \otimes e_{j}, \quad i < j.
\end{gather*}
Using these formulae we can easily compute its  eigenvectors and eigenvalues.
\begin{lemma}
\label{lem:rmatrix-eigen}
The operator $\widehat{R}$ has the eigenvectors $\{e_{i} \otimes e_{i}\}_{i = 1}^{N}$ and $\{ e_{i} \otimes e_{j} + q^{-1} e_{j} \otimes e_{i} \}_{i < j}$ with eigenvalues $q$ and $\{e_{i}\otimes e_{j} - q e_{j} \otimes e_{i}\}_{i < j}$ with eigenvalues $-q^{-1}$.
\end{lemma}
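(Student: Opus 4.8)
The plan is to prove the statement by direct verification, using the explicit formulae for $\widehat{R}$ recorded just above. The key structural observation is that $\widehat{R}$ preserves the weight grading of the basis $\{e_i \otimes e_j\}$, and in particular it maps the two-dimensional subspace $\mathrm{span}\{e_i \otimes e_j,\, e_j \otimes e_i\}$, for a fixed pair $i < j$, into itself, while each $e_i \otimes e_i$ is sent to a scalar multiple of itself. Thus $V \otimes V$ decomposes as the direct sum of the $N$ one-dimensional diagonal lines $\mathbb{C}\, e_i \otimes e_i$ and the $\binom{N}{2}$ two-dimensional blocks indexed by pairs $i < j$, and it suffices to diagonalize $\widehat{R}$ on each block separately.

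On the diagonal lines there is nothing to do: the formula $\widehat{R}(e_i \otimes e_i) = q\, e_i \otimes e_i$ already exhibits $e_i \otimes e_i$ as an eigenvector with eigenvalue $q$. On a fixed block with $i < j$, I would first record the two basic actions. For $e_i \otimes e_j$ we use the $i < j$ rule, giving $\widehat{R}(e_i \otimes e_j) = e_j \otimes e_i + (q - q^{-1}) e_i \otimes e_j$. For $e_j \otimes e_i$, which is the place where one must be careful, the first tensor factor now carries the larger index, so the $i > j$ rule applies and yields $\widehat{R}(e_j \otimes e_i) = e_i \otimes e_j$. Substituting these into the proposed combinations $e_i \otimes e_j + q^{-1} e_j \otimes e_i$ and $e_i \otimes e_j - q\, e_j \otimes e_i$ and collecting terms, a short computation produces $q$ times the first vector and $-q^{-1}$ times the second, confirming the claimed eigenvalues.

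Finally I would confirm completeness by a dimension count. The diagonal contributes $N$ eigenvectors and each of the $\binom{N}{2}$ blocks contributes $2$, for a total of $N + 2\binom{N}{2} = N^2 = \dim(V \otimes V)$. Within each block the two eigenvectors are linearly independent, since the change-of-basis matrix from $\{e_i \otimes e_j,\, e_j \otimes e_i\}$ has determinant $-(q + q^{-1}) \neq 0$; hence the listed vectors form a basis of eigenvectors and none are missed. I do not expect any genuine obstacle here: the only subtlety is the asymmetry of the defining formulae for $\widehat{R}$, which forces one to invoke the correct branch ($i < j$ versus $i > j$) when computing the action on $e_j \otimes e_i$.
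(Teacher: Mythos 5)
Your proposal is correct and follows essentially the same route as the paper: a direct computation of $\widehat{R}$ on the combinations $e_i\otimes e_j + q^{-1}e_j\otimes e_i$ and $e_i\otimes e_j - q\,e_j\otimes e_i$ using the explicit formulae, taking care to apply the correct branch to $e_j\otimes e_i$. The block decomposition and dimension count you add are a harmless (and correct) bonus beyond what the lemma actually asserts.
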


\begin{proof}
Clearly $e_{i} \otimes e_{i}$ is an eigenvector with eigenvalue $q$. For $i < j$ we compute
\[
\begin{split}
\widehat{R}(e_{i} \otimes e_{j} + q^{-1} e_{j} \otimes e_{i})
& = e_{j} \otimes e_{i} + (q - q^{-1}) e_{i} \otimes e_{j} + q^{-1} e_{i} \otimes e_{j}\\
& = q(e_{i} \otimes e_{j} + q^{-1} e_{j} \otimes e_{i}).
\end{split}
\]
Therefore $e_{i} \otimes e_{j} + q^{-1} e_{j} \otimes e_{i}$ has eigenvalue $q$. Similarly for $i < j$ we compute
\[
\begin{split}
\widehat{R}(e_{i} \otimes e_{j} - q e_{j} \otimes e_{i})
& = e_{j} \otimes e_{i} + (q - q^{-1}) e_{i} \otimes e_{j} - q e_{i} \otimes e_{j}\\
& = -q^{-1} (e_{i} \otimes e_{j} - q e_{j} \otimes e_{i}).
\end{split}
\]
Therefore $e_{i} \otimes e_{j} - q e_{j} \otimes e_{i}$ has eigenvalue $-q^{-1}$.
\end{proof}

\begin{remark}
\label{rmk:rescale-basis}
Rescaling the weight basis $\{e_i\}_{i = 1}^N$ does not change the form of the eigenvectors and the eigenvalues. As a consequence, the relations for the symmetric and exterior algebras will not depend on this rescaling. This can be seen explicitely below.
\end{remark}

\subsection{The algebras}

We identify $\mathfrak{u}_{+}$ with the fundamental representation of $U_{q}(\mathfrak{sl}_{N})$.
We write $\Lambda_{q}^{2} \mathfrak{u}_{+} = \ker (\sigma_{\mathfrak{u}_{+}, \mathfrak{u}_{+}} + \mathrm{id})$ and $S_{q}^{2} \mathfrak{u}_{+} = \ker (\sigma_{\mathfrak{u}_{+}, \mathfrak{u}_{+}} - \mathrm{id})$, since they can be identified with the antisymmetric and symmetric $2$-tensors, respectively.
Using \cref{lem:rmatrix-eigen} we have
\begin{gather*}
\Lambda_{q}^{2} \mathfrak{u}_{+} = \mathrm{span} \{e_{i} \otimes e_{j} - q e_{j} \otimes e_{i}: 1 \leq i < j \leq N\}, \\
S_{q}^{2}\mathfrak{u}_{+} = \mathrm{span} \{e_{i} \otimes e_{i}\}_{i = 1}^{N} \cup \mathrm{span} \{e_{i} \otimes e_{j} + q^{-1} e_{j} \otimes e_{i}: 1 \leq i < j \leq N\}.
\end{gather*}
From these we obtain the relations for the symmetric and exterior algebras. We denote the product in the former by juxtaposition and in the latter by $\wedge$, as usual.

\begin{proposition}
\label{prop:exterior-rels}
The generators of the algebras $S_{q}(\mathfrak{u}_{+})$ and
$\Lambda_{q}(\mathfrak{u}_{+})$ satisfy the relations
\begin{gather*}
e_{i} e_{j} = q e_{j} e_{i}, \quad i < j,\\
e_{i} \wedge e_{i} = 0, \quad e_{i} \wedge e_{j}= - q^{-1} e_{j} \wedge e_{i}, \quad i < j.
\end{gather*}
\end{proposition}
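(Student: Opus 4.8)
The plan is to read off both sets of relations directly from the explicit descriptions of the degree-two components $\Lambda_q^2 \mathfrak{u}_+$ and $S_q^2 \mathfrak{u}_+$ obtained just above. Recall that by definition $S_q(\mathfrak{u}_+) = T(\mathfrak{u}_+) / \langle \ker(\sigma_{\mathfrak{u}_+, \mathfrak{u}_+} + \mathrm{id}) \rangle$, so that the defining relations in degree two are precisely the statement that every element of $\ker(\sigma_{\mathfrak{u}_+, \mathfrak{u}_+} + \mathrm{id}) = \Lambda_q^2 \mathfrak{u}_+$ vanishes in the quotient. Likewise $\Lambda_q(\mathfrak{u}_+) = T(\mathfrak{u}_+) / \langle \ker(\sigma_{\mathfrak{u}_+, \mathfrak{u}_+} - \mathrm{id}) \rangle$, so its degree-two relations are that every element of $\ker(\sigma_{\mathfrak{u}_+, \mathfrak{u}_+} - \mathrm{id}) = S_q^2 \mathfrak{u}_+$ vanishes. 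Thus the entire content of the statement reduces to translating the two spanning sets into relations among the generators.

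First I would treat the symmetric algebra. Using the spanning set $\Lambda_q^2 \mathfrak{u}_+ = \mathrm{span}\{ e_i \otimes e_j - q e_j \otimes e_i : i < j \}$, imposing that each such generator vanishes in the quotient $S_q(\mathfrak{u}_+)$ gives $e_i e_j - q e_j e_i = 0$, that is $e_i e_j = q e_j e_i$ for $i < j$, which is the first stated relation. Next I would treat the exterior algebra in the same way: from $S_q^2 \mathfrak{u}_+ = \mathrm{span}\{ e_i \otimes e_i \}_{i=1}^N \cup \mathrm{span}\{ e_i \otimes e_j + q^{-1} e_j \otimes e_i : i < j \}$, setting the diagonal vectors to zero yields $e_i \wedge e_i = 0$, while setting the off-diagonal vectors to zero yields $e_i \wedge e_j + q^{-1} e_j \wedge e_i = 0$, i.e. $e_i \wedge e_j = -q^{-1} e_j \wedge e_i$ for $i < j$, giving the remaining relations.

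There is no substantial obstacle here: the real work is already carried out in the eigenvector computation of \cref{lem:rmatrix-eigen} together with the identification of the kernels with the positive- and negative-eigenvalue eigenspaces of $\widehat{R}$, both of which are in hand. The one point deserving care is to pair the correct kernel with each algebra, since there is a sign inversion: the symmetric algebra $S_q(\mathfrak{u}_+)$ is cut out by the \emph{antisymmetric} (negative-eigenvalue) vectors, whereas the exterior algebra $\Lambda_q(\mathfrak{u}_+)$ is cut out by the \emph{symmetric} (positive-eigenvalue) vectors. Finally, I would note that since the listed spanning sets exhaust the respective kernels and the defining ideals are generated in degree two, these relations in fact constitute complete presentations; and by \cref{rmk:rescale-basis} they are insensitive to the normalization of the weight basis $\{ e_i \}_{i=1}^N$.
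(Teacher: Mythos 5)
Your argument is correct and is essentially identical to the paper's proof: both identify $S_q(\mathfrak{u}_+) = T(\mathfrak{u}_+)/\langle \Lambda_q^2\mathfrak{u}_+\rangle$ and $\Lambda_q(\mathfrak{u}_+) = T(\mathfrak{u}_+)/\langle S_q^2\mathfrak{u}_+\rangle$ and read the relations directly off the explicit spanning sets coming from \cref{lem:rmatrix-eigen}. You simply spell out the translation step in more detail than the paper does; nothing is missing.
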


\begin{proof}
The two algebras are defined by $S_{q}(\mathfrak{u}_{+}) = T(\mathfrak{u}_{+}) / \langle \Lambda_{q}^{2} \mathfrak{u}_{+} \rangle$ and $\Lambda_{q} (\mathfrak{u}_{+}) = T(\mathfrak{u}_{+}) / \langle S_{q}^{2} \mathfrak{u}_{+} \rangle$.
Then the relations follow from the explicit characterization of the two subspaces.
\end{proof}

Recall that, given a quadratic algebra $A = T(V) / \langle R \rangle$ with relations $R \subset V \otimes V$, the \emph{quadratic dual algebra} $A^{!}$ is defined by $A^{!} = T(V^{*}) / \langle R^{\circ} \rangle$, where $R^{\circ} \subset V^{*} \otimes V^{*}$ is defined by
$$
R^{\circ} = \{\varphi \in V^{*} \otimes V^{*}: \langle \varphi, R \rangle = 0\},
$$
where $\langle \cdot, \cdot \rangle$ is the pairing between $V$ and $V^{*}$.

The modules $\mathfrak{u}_{+}$ and $\mathfrak{u}_{-}$ are dual.
We denote the dual basis of $\mathfrak{u}_{-}$ by $\{f_{i}\}_{i=1}^{N}$.
It follows from general arguments that $\Lambda_{q}(\mathfrak{u}_{-})$ is the quadratic dual of $S_{q}(\mathfrak{u}_{+})$, see \cite[Proposition 2.11]{bezw}.
Then using the previous result we can obtain the relations for $\Lambda_{q}(\mathfrak{u}_{-})$.

\begin{proposition}
\label{prop:exterior-dual-rels}
The exterior algebra $\Lambda_{q}(\mathfrak{u}_{-})$ satisfies the relations
$$
f_{i} \wedge f_{i} = 0, \quad f_{i} \wedge f_{j}= - q f_{j} \wedge f_{i} \quad i < j.
$$
\end{proposition}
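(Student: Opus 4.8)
The plan is to compute $\Lambda_q(\mathfrak{u}_-)$ as the quadratic dual of $S_q(\mathfrak{u}_+)$, whose relations are already in hand from \cref{prop:exterior-rels}. Write $S_q(\mathfrak{u}_+) = T(\mathfrak{u}_+)/\langle R \rangle$ with
\[
R = \mathrm{span}\{e_i \otimes e_j - q\, e_j \otimes e_i : 1 \leq i < j \leq N\},
\]
so that $\dim R = \binom{N}{2}$. By the cited \cite[Proposition 2.11]{bezw} we have $\Lambda_q(\mathfrak{u}_-) = S_q(\mathfrak{u}_+)^{!} = T(\mathfrak{u}_-)/\langle R^{\circ} \rangle$, and the whole problem reduces to determining the annihilator $R^{\circ} \subset \mathfrak{u}_- \otimes \mathfrak{u}_-$ and reading off the induced relations on the generators $f_i$.

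First I would expand a general element $\varphi = \sum_{k,l} c_{kl}\, f_k \otimes f_l$ and impose $\langle \varphi, e_i \otimes e_j - q\, e_j \otimes e_i \rangle = 0$ for every $i < j$. The one point that requires care is the pairing between $\mathfrak{u}_- \otimes \mathfrak{u}_-$ and $\mathfrak{u}_+ \otimes \mathfrak{u}_+$: as the dual of a tensor product it reverses the order of the factors, namely $\langle f_k \otimes f_l, e_i \otimes e_j \rangle = \delta_{kj}\delta_{li}$. With this convention the orthogonality condition becomes $c_{ji} = q\, c_{ij}$ for $i < j$, while the diagonal coefficients $c_{ii}$ remain free. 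Hence $R^{\circ}$ has the basis $\{f_i \otimes f_i\}_{i=1}^{N} \cup \{f_i \otimes f_j + q\, f_j \otimes f_i : i < j\}$, and passing to the quotient gives exactly $f_i \wedge f_i = 0$ and $f_i \wedge f_j = -q\, f_j \wedge f_i$ for $i < j$.

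The main obstacle is precisely fixing this order convention, since the naive (unswapped) pairing $\langle f_k \otimes f_l, e_i \otimes e_j \rangle = \delta_{ki}\delta_{lj}$ would instead produce the factor $q^{-1}$. I would settle it by adhering to the conventions of \cite{bezw} — equivalently, to the canonical identification $(\mathfrak{u}_+ \otimes \mathfrak{u}_+)^{*} \cong \mathfrak{u}_- \otimes \mathfrak{u}_-$ underlying the quadratic dual — and then cross-check with two consistency tests: the dimension count $\dim R^{\circ} = \binom{N+1}{2} = N^2 - \dim R$, which must hold for a quadratic dual and confirms that the correct subspace has been found, and the classical limit $q \to 1$, in which the relation collapses to the usual anticommutativity $f_i \wedge f_j = -f_j \wedge f_i$ of the exterior algebra $\Lambda(\mathfrak{u}_-)$.
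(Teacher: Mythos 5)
Your proposal is correct and follows essentially the same route as the paper: both compute $\Lambda_q(\mathfrak{u}_-)$ as the quadratic dual of $S_q(\mathfrak{u}_+)$ using the order-reversing pairing $\langle f_k \otimes f_l, e_i \otimes e_j \rangle = \langle f_l, e_i\rangle\langle f_k, e_j\rangle$, and you correctly identify the annihilator of $\mathrm{span}\{e_i \otimes e_j - q\,e_j \otimes e_i\}_{i<j}$ as spanned by $\{f_i \otimes f_i\}$ and $\{f_i \otimes f_j + q\,f_j \otimes f_i\}_{i<j}$. The only (harmless) difference is that you determine the full annihilator with a dimension count, whereas the paper just exhibits the relevant elements of $R^{\circ}$ and reads off the relations.
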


\begin{proof}
The space of relations of $S_{q}(\mathfrak{u}_{+})$ is $\Lambda_{q}^{2}\mathfrak{u}_{+} = \{ e_{i} \otimes e_{j} - q e_{j} \otimes e_{i} \}_{i < j}$. Recall the definition of the pairing $\langle  x \otimes x^{\prime}, y \otimes y^{\prime} \rangle = \langle x^{\prime}, y \rangle \langle x, y^{\prime} \rangle$.
Then for $i < j$ we have
\[
\langle f_{a} \otimes f_{b}, e_{i} \otimes e_{j} - q e_{j} \otimes e_{i} \rangle
 = \langle f_{b}, e_{i} \rangle \langle f_{a}, e_{j} \rangle - q \langle f_{b}, e_{j} \rangle\langle f_{a}, e_{i} \rangle
 = \delta_{ib} \delta_{ja} - q\delta_{jb} \delta_{ia}.
\]
First of all observe that if $a = b$ then this is zero, since $i < j$.
Therefore $f_{i} \otimes f_{i} \in (\Lambda_{q}^{2}\mathfrak{u}_{+})^{\circ}$, which gives the relation $f_{i} \wedge f_{i} = 0$. Then we have
$$
\langle f_{i} \otimes f_{j}, e_{i} \otimes e_{j} - q e_{j} \otimes e_{i} \rangle = -q, \quad\langle f_{j} \otimes f_{i}, e_{i} \otimes e_{j} - q e_{j} \otimes e_{i} \rangle = 1.
$$
Therefore we have $f_{i} \otimes f_{j} + q f_{j}\otimes f_{i} \in (\Lambda_{q}^{2} \mathfrak{u}_{+})^{\circ}$,
since
$$
\langle f_{i} \otimes f_{j} + q f_{j} \otimes f_{i}, e_{i} \otimes e_{j} - q e_{j} \otimes e_{i} \rangle = - q + q = 0.
$$
This gives the relation $f_{i} \wedge f_{j} = -q f_{j} \wedge f_{i}$ for $i < j$.
\end{proof}

\section{Pairings and inner products}
\label{sec:pairing}

In this section we will discuss how to extend the pairing of $\mathfrak{u}_{-}$ and $\mathfrak{u}_{+}$ to a pairing of the corresponding exterior algebras.
Similarly we will extend the unique (up to a constant) invariant Hermitian inner product on $\mathfrak{u}_{+}$ to the whole exterior algebra.
We will follow the approach of \cite{qflag2}, with certain minor modifications.
In their treatment, a normalization in each degree is fixed from the outset, see for example \cite[Proposition 3.6]{qflag2}.
This leads to some small issues in the classical limit.
Instead we will consider general rescalings in each degree, parametrized by certain numbers $\{\lambda_k\}_{k = 0}^N$ and $\{\lambda^\prime_k\}_{k = 0}^N$.
This general setting will turn out to be crucial for the computation of the square of the Dolbeault–Dirac operator.

\subsection{Extension of the pairing}

In the classical case, given a dual pairing $\langle \cdot, \cdot \rangle: V^{*} \otimes V \to\mathbb{C}$ of vector spaces, its standard extension to the corresponding exterior algebras $\langle \cdot, \cdot \rangle: \Lambda^{k}(V^{*}) \otimes \Lambda^{k}(V) \to \mathbb{C}$ is obtained via the determinant. Concretely we have
\[
\langle y_{k} \wedge \cdots \wedge y_{1}, x_{1} \wedge \cdots \wedge x_{k} \rangle
= \det(\langle y_{i}, x_{j} \rangle).
\]
Taking dual bases $\{e_{i}\}_i$ and $\{f_{i}\}_i$ of $V$ and $V^{*}$, that is $\langle f_{i}, e_{j}\rangle=\delta_{ij}$, we see that
\[
\langle f_{k} \wedge \cdots \wedge f_{1}, e_{1}\wedge \cdots \wedge e_{k}\rangle = 1.
\]
This pairing has the property that $\langle y \wedge z, x \rangle = \langle y, \delta_{z}(x) \rangle$, where $\delta_{z}$ denotes interior multiplication by $z$. It is given explicitely by te formula
$$
\delta_{z}(x_{1} \wedge \cdots \wedge x_{k}) = \sum_{j = 1}^{k} (-1)^{j - 1} \langle z, x_{j} \rangle x_{1} \wedge \cdots \wedge \widehat{x_{j}} \wedge \cdots \wedge x_{k},
$$
where the hat denotes omission as usual.
This property is important in order to construct a representation of the Clifford algebra $\mathrm{Cl}(V \oplus V^{*})$ on $\Lambda^{k}(V)$.

A pairing of exterior algebras can also be obtained by restriction of a pairing of tensor algebras, since exterior algebras are quotients of tensor algebras.
This is the strategy adopted in \cite[Section 3]{qflag2}. Let $\langle \cdot, \cdot \rangle: \mathfrak{u}_{-} \otimes \mathfrak{u}_{+} \to \mathbb{C}$ denote the dual pairing. It can be extended to a pairing of tensor algebras, denoted by $\langle \cdot, \cdot \rangle_{T}:\mathfrak{u}_{-}^{\otimes k} \otimes \mathfrak{u}_{+}^{\otimes k} \to \mathbb{C}$, by defining
\[
\langle y_{k} \otimes \cdots \otimes y_{1}, x_{1} \otimes \cdots \otimes x_{k} \rangle_{T}
= \langle y_{1}, x_{1} \rangle \cdots \langle y_{k}, x_{k} \rangle.
\]
It is clearly non-degenerate.
Here the subscript $T$ is to stress that it is a pairing of tensor algebras. Notice also the reverse order of the elements in the first slot.

\begin{notation}
Keeping with the notation of \cite{qflag2}, we denote by $\Lambda_{q}^{k} \mathfrak{u}_{\pm}$ the antisymmetric $k$-tensors and by $\Lambda_{q}^{k}(\mathfrak{u}_{\pm})$ the elements of degree $k$ inside the exterior algebra $\Lambda_{q}(\mathfrak{u}_{\pm})$.
\end{notation}

\begin{remark}
It is crucial to keep in mind the distinction between $\Lambda_{q}^{k} \mathfrak{u}_{\pm}$ and $\Lambda_{q}^{k}(\mathfrak{u}_{\pm})$, despite the similar notations. The former are \emph{submodules} of the tensor algebra $T(\mathfrak{u}_{\pm})$, while the latter are \emph{quotients}.
They can be identified, as we will see below.
\end{remark}

For each degree $k$ we have the maps $\pi_{\pm}^{k}: \Lambda_{q}^{k} \mathfrak{u}_{\pm} \to \Lambda_{q}^{k}(\mathfrak{u}_{\pm})$ obtained by composition of the inclusions $\Lambda_{q}^{k}\mathfrak{u}_{\pm} \to \mathfrak{u}_{\pm}^{\otimes k}$ with the quotients $\mathfrak{u}_{\pm}^{\otimes k} \to \Lambda_{q}^{k}(\mathfrak{u}_{\pm})$. They are shown to be isomorphisms in \cite[Proposition 3.2]{ChTS14}.
Then we define a pairing $\langle \cdot, \cdot \rangle_{\Lambda}: \Lambda_{q}^{k}(\mathfrak{u}_{-}) \otimes \Lambda_{q}^{k}(\mathfrak{u}_{+}) \to \mathbb{C}$ by
\[
\langle y, x \rangle_{\Lambda}
= \langle(\pi_{-}^{k})^{-1}(y), (\pi_{+}^{k})^{-1}(x) \rangle_{T}.
\]
Here the subscript $\Lambda$ is to stress that it is a pairing of exterior algebras. It is non-degenerate, as shown in \cite[Proposition 3.6]{qflag2}.
Notice that we are free to make a rescaling in each degree by a constant, without changing the main properties of the pairing.

We point out that in the classical case the pairing $\langle \cdot, \cdot \rangle_{\Lambda}$, as defined above, does not coincide with the pairing $\langle \cdot, \cdot \rangle$, previously introduced via the determinant.
To see this in a simple example, consider $e_{i} \otimes e_{j} - e_{j} \otimes e_{i} \in \Lambda^{2}\mathfrak{u}_{+}$ for $i < j$.
Applying $\pi_{+}^{2}$ we get
\[
\pi_{+}^{2}(e_{i} \otimes e_{j} - e_{j} \otimes e_{i})
= e_{i} \wedge e_{j} - e_{j} \wedge e_{i}
= 2 e_{i} \wedge e_{j}.
\]
Thus we find that $(\pi_{+}^{2})^{-1} (e_{i} \wedge e_{j}) = \frac{1}{2} (e_{i} \otimes e_{j} - e_{j} \otimes e_{i})$.
Repeating the same argument for $\mathfrak{u}_{-}$ we get $(\pi_{-}^{2})^{-1} (f_{j} \wedge f_{i}) = \frac{1}{2} (f_{j} \otimes f_{i} - f_{i}\otimes f_{j})$, where again $\{f_{i}\}$ is the basis dual to $\{e_{i}\}$.
Now we can easily compute the pairing $\langle f_{j} \wedge f_{i}, e_{i} \wedge e_{j}\rangle_{\Lambda}$. It is given by
$$
\langle f_{j} \wedge f_{i}, e_{i} \wedge e_{j}\rangle_{\Lambda}
= \frac{1}{4} \langle f_{j} \otimes f_{i} - f_{i} \otimes f_{j}, e_{i} \otimes e_{j} - e_{j}\otimes e_{i} \rangle_{T}
= \frac{1}{2}.
$$
But this does not coincide with $\langle f_{j} \wedge f_{i}, e_{i} \wedge e_{j} \rangle = 1$.
The situation is similar for higher degrees, with a mismatch given by $k!$ in degree $k$.
A simple fix for this issue is to rescale $\langle(\pi_{-}^{k})^{-1}(y), (\pi_{+}^{k})^{-1}(x) \rangle_{T}$ in each degree by the appropriate normalization.

\subsection{Computation of the pairing}

In this subsection we compute the pairing of the exterior algebras $\Lambda_{q}^{k}(\mathfrak{u}_{+})$ and $\Lambda_{q}^{k}(\mathfrak{u}_{-})$ for all degrees.
First we introduce some handy notation.

\begin{notation}
Write $\underline{i} = (i_{1}, \cdots, i_{k})$, with $i_{1} < \cdots < i_{k}$. Then we define the elements $e_{\underline{i}} = e_{i_{1}} \wedge \cdots \wedge e_{i_{k}} \in \Lambda_{q}^{k}(\mathfrak{u}_{+})$ and $f_{\underline{i}} = f_{i_{k}} \wedge \cdots \wedge f_{i_{1}} \in \Lambda_{q}^{k}(\mathfrak{u}_{-})$.
\end{notation}

It is easy to see, using the relations for the exterior algebras obtained before, that $\{e_{\underline{i}}\}_{\underline{i}}$ and $\{f_{\underline{i}}\}_{\underline{i}}$ form bases for their respective algebras.
Notice also the reverse order in $f_{\underline{i}}$.

\begin{definition}
We define quantum analogs of the antisymmetrization maps by
\[
\mathbf{A}(e_{\underline{i}}) = \sum_{p \in S_{k}} \mathbf{A}_p(e_{\underline{i}}) = \sum_{p \in S_{k}} (-q)^{\|p\|} e_{p(i_{1})} \otimes \cdots \otimes e_{p(i_{k})}.
\]
An analogous definition is given for the antisymmetrization of the elements $\{ f_{\underline{i}} \}_{\underline{i}}$.
\end{definition}

Indeed, as we will show in the next lemma, they are antisymmetric tensors. Moreover they essentially lift the bases $\{e_{\underline{i}}\}$ and $\{f_{\underline{i}}\}$ to their respective tensor algebras.

\begin{lemma}
We have $\mathbf{A}(e_{\underline{i}}) \in \Lambda_{q}^{k} \mathfrak{u}_{+}$ and $\mathbf{A}(f_{\underline{i}}) \in \Lambda_{q}^{k} \mathfrak{u}_{-}$. Moreover
\[
\pi_{+}^{k}(\mathbf{A}(e_{\underline{i}})) = q^{k (k - 1) / 2} [k]! e_{\underline{i}}, \quad
\pi_{-}^{k}(\mathbf{A}(f_{\underline{i}})) = q^{k (k - 1) / 2} [k]! f_{\underline{i}}.
\]
\end{lemma}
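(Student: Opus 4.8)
The plan is to prove the two statements about $\mathbf{A}(e_{\underline{i}})$ in detail and then obtain those about $\mathbf{A}(f_{\underline{i}})$ by the same argument with minor bookkeeping changes. First I would rewrite the antisymmetrizer as a sum over words: as $p$ runs over $S_{k}$, the tensor $e_{p(i_{1})} \otimes \cdots \otimes e_{p(i_{k})}$ runs exactly once over all reorderings $e_{w_{1}} \otimes \cdots \otimes e_{w_{k}}$ of the increasing tuple, and since $i_{1} < \cdots < i_{k}$ the length $\|p\|$ equals the number of inversions $\mathrm{inv}(w)$ of the resulting word $w$. Thus $\mathbf{A}(e_{\underline{i}}) = \sum_{w} (-q)^{\mathrm{inv}(w)} e_{w_{1}} \otimes \cdots \otimes e_{w_{k}}$, the sum being over permutations $w$ of $\{i_1, \dots, i_k\}$.

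For the membership $\mathbf{A}(e_{\underline{i}}) \in \Lambda_{q}^{k} \mathfrak{u}_{+}$ I would use the standard description of the antisymmetric tensors as the intersection
\[
\Lambda_{q}^{k} \mathfrak{u}_{+} = \bigcap_{m = 1}^{k - 1} \mathfrak{u}_{+}^{\otimes (m - 1)} \otimes \Lambda_{q}^{2} \mathfrak{u}_{+} \otimes \mathfrak{u}_{+}^{\otimes (k - m - 1)},
\]
valid here because these braided exterior algebras are flat (this underlies the isomorphism $\pi_{\pm}^{k}$ of \cite{ChTS14}). It then suffices to check, for each $m$, that $\mathbf{A}(e_{\underline{i}})$ lies in $\mathfrak{u}_{+}^{\otimes (m - 1)} \otimes \Lambda_{q}^{2} \mathfrak{u}_{+} \otimes \mathfrak{u}_{+}^{\otimes (k - m - 1)}$. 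Grouping the words into pairs $\{w, w'\}$ related by the transposition of positions $m, m + 1$, and normalizing so that $w$ is increasing there with $w_{m} = a < b = w_{m + 1}$, one has $\mathrm{inv}(w') = \mathrm{inv}(w) + 1$ and the two words share the same prefix and suffix, so the pair assembles into $(\text{prefix}) \otimes (-q)^{\mathrm{inv}(w)}(e_{a} \otimes e_{b} - q\, e_{b} \otimes e_{a}) \otimes (\text{suffix})$. By the explicit description of $\Lambda_{q}^{2} \mathfrak{u}_{+}$ preceding \cref{prop:exterior-rels} (equivalently \cref{lem:rmatrix-eigen}), the middle factor $e_{a} \otimes e_{b} - q\, e_{b} \otimes e_{a}$ is precisely a generator of $\Lambda_{q}^{2} \mathfrak{u}_{+}$, and summing over all pairs gives the membership.

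For the value of $\pi_{+}^{k}$, recall that this map is induced by the quotient sending $\otimes$ to $\wedge$, so $\pi_{+}^{k}(\mathbf{A}(e_{\underline{i}})) = \sum_{w} (-q)^{\mathrm{inv}(w)} e_{w_{1}} \wedge \cdots \wedge e_{w_{k}}$. Using the relation $e_{j} \wedge e_{i} = -q\, e_{i} \wedge e_{j}$ for $i < j$ from \cref{prop:exterior-rels}, sorting each monomial into increasing order contributes one factor $-q$ per inversion, whence $e_{w_{1}} \wedge \cdots \wedge e_{w_{k}} = (-q)^{\mathrm{inv}(w)} e_{\underline{i}}$ and therefore $\pi_{+}^{k}(\mathbf{A}(e_{\underline{i}})) = \big( \sum_{w} q^{2 \mathrm{inv}(w)} \big) e_{\underline{i}}$. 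The remaining point is the Mahonian identity $\sum_{p \in S_{k}} t^{\|p\|} = \prod_{n = 1}^{k} (1 + t + \cdots + t^{n - 1})$; specializing $t = q^{2}$ and using $1 + q^{2} + \cdots + q^{2(n - 1)} = q^{n - 1} [n]$ collapses the product to $q^{k(k - 1)/2} [k]!$, which is the claimed scalar.

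The $f$ statements follow by the same computation after two adjustments: $f_{\underline{i}} = f_{i_{k}} \wedge \cdots \wedge f_{i_{1}}$ is written in decreasing order, and the relevant relation is $f_{i} \wedge f_{j} = -q\, f_{j} \wedge f_{i}$ for $i < j$ (\cref{prop:exterior-dual-rels}). Consequently one sorts monomials into decreasing order, the adjacent two-tensor appearing is $f_{b} \otimes f_{a} - q\, f_{a} \otimes f_{b}$, a scalar multiple of the generator $f_{a} \otimes f_{b} - q^{-1} f_{b} \otimes f_{a}$ of $\Lambda_{q}^{2} \mathfrak{u}_{-}$, and the exponent statistic becomes the number of co-inversions. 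Since co-inversions and inversions are equidistributed over $S_{k}$, the same identity yields the identical scalar $q^{k(k - 1)/2} [k]!$. I expect the main difficulty to be purely organisational — fixing the reordering statistic consistently and checking that the adjacent two-tensor is the correct $\Lambda_{q}^{2}$ generator in both the $\mathfrak{u}_{+}$ and the reverse-ordered $\mathfrak{u}_{-}$ conventions — together with the one genuinely non-formal ingredient, namely the intersection description of $\Lambda_{q}^{k} \mathfrak{u}_{\pm}$, which rests on the flatness of these algebras.
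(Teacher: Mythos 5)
Your proof is correct and is essentially the paper's own argument: the same pairing of permutations differing by an adjacent transposition to establish membership in $\bigcap_{j} \ker(\sigma_{j}+\mathrm{id}) = \bigcap_{j} \mathfrak{u}_{+}^{\otimes (j-1)} \otimes \Lambda_{q}^{2}\mathfrak{u}_{+} \otimes \mathfrak{u}_{+}^{\otimes (k-j-1)}$, the same reordering count producing $\sum_{p \in S_{k}} q^{2\|p\|}$, and the same evaluation of that sum as $q^{k(k-1)/2}[k]!$. The only cosmetic differences are that the intersection description is the paper's \emph{definition} of the antisymmetric tensors (no flatness input is needed for the membership claim; flatness enters only in knowing $\pi_{\pm}^{k}$ are isomorphisms, which this lemma does not assert), and that for the $f$'s the double reversal --- present both in $f_{\underline{i}}$ and in $\mathbf{A}(f_{\underline{i}})$ --- makes the sorting exponent literally $\|p\|$ again, so your appeal to the equidistribution of inversions and co-inversions, while valid, is not actually needed.
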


\begin{proof}
Recall that the antisymmetric tensors are defined by $\Lambda_{q}^{k}\mathfrak{u}_{+} = \bigcap_{j = 1}^{k - 1}\ker(\sigma_{j} + \mathrm{id})$,
where $\sigma_{j}$ is the commutor acting on the factors $j$ and $j + 1$, see \cite[Definition 3.5]{qflag2}.
Fix $j \leq k - 1$. Then for any $p \in S_{k}$ there is a unique permutation $p^{\prime} \in S_{k}$ such that $p^{\prime}(i_{j}) = p(i_{j + 1})$ and $p^{\prime}(i_{j + 1}) = p(i_{j})$, with all other elements fixed. We have $\|p^{\prime}\| = \|p\| + 1$ if $p(i_{j}) < p(i_{j + 1})$ and $\|p^{\prime}\| = \|p\| - 1$ if $p(i_{j}) > p(i_{j + 1})$. Consider the first case. We compute
\[
\begin{split}
\mathbf{A}_p(e_{\underline{i}}) + \mathbf{A}_{p^\prime}(e_{\underline{i}})
& =(-q)^{\|p\|} e_{p(i_{1})} \otimes \cdots\otimes e_{p(i_{j})} \otimes e_{p(i_{j + 1})}\otimes \cdots \otimes e_{p(i_{k})}\\
& + (-q)^{\|p^{\prime}\|} e_{p^{\prime}(i_{1})} \otimes \cdots \otimes e_{p^{\prime}(i_{j})} \otimes e_{p^{\prime}(i_{j + 1})} \otimes \cdots\otimes e_{p^{\prime}(i_{k})}\\
& =(-q)^{\|p\|} e_{p(i_{1})} \otimes \cdots\otimes(e_{p(i_{j})} \otimes e_{p(i_{j + 1})}- q e_{p(i_{j + 1})} \otimes e_{p(i_{j})}) \otimes\cdots \otimes e_{p(i_{k})}.
\end{split}
\]
Observe that the element $e_{p(i_{j})} \otimes e_{p(i_{j + 1})}-qe_{p(i_{j + 1})} \otimes e_{p(i_{j})}$ belongs to $\Lambda_{q}^{2}\mathfrak{u}_{+}$, since $p(i_{j}) < p(i_{j + 1})$. The other case is similarly verified. Since this holds for any $p \in S_k$ and we are summing over all permutations, we conclude that $\mathbf{A}(e_{\underline{i}}) \in \Lambda_{q}^{k}\mathfrak{u}_{+}$.

Similarly we consider the case $\mathbf{A}(f_{\underline{i}})$. Its summands are of the form
\[
\mathbf{A}_p(f_{\underline{i}}) = (-q)^{\|p\|} f_{p(i_{k})} \otimes \cdots \otimes f_{p(i_{1})}.
\]
We can repeat the argument used for $\mathbf{A}(e_{\underline{i}})$. Consider the unique permutation $p^\prime \in S_k$ which swaps $p(i_{j})$ and $p(i_{j -1})$. Then we have $\|p^{\prime}\| = \|p\| + 1$ if $p(i_{j}) > p(i_{j - 1})$ and $\|p^{\prime}\| = \|p\| - 1$ if $p(i_{j}) < p(i_{j - 1})$.
This is because the order defining $f_{\underline{i}}$ is the opposite of the order defining $e_{\underline{i}}$.
Let us consider the first case. Summing the relevant terms we get
\[
\begin{split}
\mathbf{A}_p(f_{\underline{i}}) + \mathbf{A}_{p^\prime}(f_{\underline{i}})
& =(-q)^{\|p\|}f_{p(i_{k})}\otimes\cdots\otimes f_{p(i_{j})}\otimes f_{p(i_{j-1})}\otimes\cdots\otimes f_{p(i_{1})}\\
 & +(-q)^{\|p^{\prime}\|}f_{p^{\prime}(i_{k})}\otimes\cdots\otimes f_{p^{\prime}(i_{j})}\otimes f_{p^{\prime}(i_{j-1})}\otimes\cdots\otimes f_{p^{\prime}(i_{1})}\\
 & =(-q)^{\|p\|}f_{p(i_{k})}\otimes\cdots\otimes(f_{p(i_{j})}\otimes f_{p(i_{j-1})}-qf_{p(i_{j-1})}\otimes f_{p(i_{j})})\otimes\cdots\otimes f_{p(i_{1})}.
\end{split}
\]
Since $p(i_{j}) > p(i_{j - 1})$, the element $f_{p(i_{j})} \otimes f_{p(i_{j - 1})} -q f_{p(i_{j - 1})} \otimes f_{p(i_{j})}$
belongs to $\Lambda_{q}^{2}\mathfrak{u}_{-}$. The other case is similar. Therefore we conclude that $\mathbf{A}(f_{\underline{i}}) \in \Lambda_{q}^{k} \mathfrak{u}_{-}$.

Next we compute the elements $\pi_{+}^{k}(\mathbf{A}(e_{\underline{i}}))$. By definition we have
\[
\pi_{+}^{k}(\mathbf{A}(e_{\underline{i}})) = \sum_{p \in S_{k}} (-q)^{\|p\|} e_{p(i_{1})} \wedge \cdots\wedge e_{p(i_{k})}.
\]
We have the relation $e_{a} \wedge e_{b} = -q e_{b} \wedge e_{a}$ for $a > b$. Therefore we can rewrite every element $e_{p(i_{1})} \wedge \cdots \wedge e_{p(i_{k})}$ as $e_{i_{1}} \wedge \cdots \wedge e_{i_{k}}$, up to some constant. Recall that the length $\|p\|$ coincides with the number of inversions. Hence upon reordering the element $e_{p(i_{1})} \wedge \cdots \wedge e_{p(i_{k})}$ we pick up a factor $(-q)^{\|p\|}$. We are left with the expression
\[
\pi_{+}^{k}(\mathbf{A}(e_{\underline{i}})) = \sum_{p \in S_{k}} q^{2 \|p\|} e_{i_{1}} \wedge \cdots\wedge e_{i_{k}}.
\]
The above sum is well known and the result is
\[
\pi_{+}^{k}(\mathbf{A}(e_{\underline{i}})) = q^{k (k - 1) / 2} [k]! e_{i_{1}} \wedge \cdots\wedge e_{i_{k}}.
\]
The case of $\pi_{-}^{k}(\mathbf{A}(f_{\underline{i}}))$ is completely analogous and therefore omitted.
\end{proof}

We are now in the position to compute the pairings of $\{e_{\underline{i}}\}_{\underline{i}}$ with $\{f_{\underline{i}}\}_{\underline{i}}$.

\begin{proposition}
Let $f_{\underline{i}} \in \Lambda_{q}^{k}(\mathfrak{u}_{-})$ and $e_{\underline{j}} \in \Lambda_{q}^{k}(\mathfrak{u}_{+})$.
Then we have $\langle f_{\underline{i}}, e_{\underline{j}}\rangle = 0$ for $\underline{i} \neq\underline{j}$, while $\langle f_{\underline{i}}, e_{\underline{i}}\rangle = q^{- k (k - 1) / 2} / [k]!$ for $\underline{i} = \underline{j}$.
\end{proposition}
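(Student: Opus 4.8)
The plan is to reduce the entire computation to a single evaluation of the tensor pairing $\langle\cdot,\cdot\rangle_T$, using the definition of $\langle\cdot,\cdot\rangle_\Lambda$ together with the preceding lemma. Since the maps $\pi_\pm^k$ are isomorphisms, the lemma gives
\[
(\pi_{+}^{k})^{-1}(e_{\underline{j}}) = \frac{1}{q^{k(k-1)/2}[k]!}\,\mathbf{A}(e_{\underline{j}}), \qquad (\pi_{-}^{k})^{-1}(f_{\underline{i}}) = \frac{1}{q^{k(k-1)/2}[k]!}\,\mathbf{A}(f_{\underline{i}}),
\]
so that by the very definition of the pairing of exterior algebras,
\[
\langle f_{\underline{i}}, e_{\underline{j}}\rangle = \frac{1}{q^{k(k-1)}\,([k]!)^{2}}\,\langle \mathbf{A}(f_{\underline{i}}), \mathbf{A}(e_{\underline{j}})\rangle_{T}.
\]
Everything then comes down to evaluating the tensor pairing on the right.

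Next I would expand both antisymmetrizations and use bilinearity. Writing $\mathbf{A}(e_{\underline{j}}) = \sum_{p}(-q)^{\|p\|}e_{p(j_1)}\otimes\cdots\otimes e_{p(j_k)}$ and $\mathbf{A}(f_{\underline{i}}) = \sum_{p'}(-q)^{\|p'\|}f_{p'(i_k)}\otimes\cdots\otimes f_{p'(i_1)}$, one notes that the reverse order built into the definition of $\langle\cdot,\cdot\rangle_T$ and the reverse order built into $f_{\underline{i}}$ (and hence into $\mathbf{A}(f_{\underline{i}})$) cancel each other, so that the index $p'(i_m)$ is paired against $p(j_m)$ for each $m$. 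Since $\{f_a\}$ and $\{e_b\}$ are dual bases, each summand reduces to $(-q)^{\|p\|+\|p'\|}\prod_{m}\delta_{p'(i_m),\,p(j_m)}$.

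Now the two cases separate cleanly. Each of $p,p'$ permutes the ordered index sets $\{j_1,\dots,j_k\}$ and $\{i_1,\dots,i_k\}$, so the product of Kronecker deltas can be nonzero only if these two sets coincide; for $\underline{i}\neq\underline{j}$ they do not, every term vanishes, and $\langle f_{\underline{i}}, e_{\underline{j}}\rangle = 0$. For $\underline{i}=\underline{j}$ both permutations act on the same ordered set and the deltas force $p'=p$, leaving the surviving contribution $\sum_{p\in S_k}(-q)^{2\|p\|} = \sum_{p\in S_k}q^{2\|p\|}$. This is precisely the Poincaré-polynomial sum already evaluated in the previous lemma as $q^{k(k-1)/2}[k]!$. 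Substituting this value into the displayed identity, the scalars collapse to
\[
\langle f_{\underline{i}}, e_{\underline{i}}\rangle = \frac{q^{k(k-1)/2}[k]!}{q^{k(k-1)}\,([k]!)^{2}} = \frac{q^{-k(k-1)/2}}{[k]!},
\]
as claimed.

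The main obstacle I anticipate is not analytic but purely a matter of bookkeeping: one must keep the two independent order reversals — the one in the definition of $\langle\cdot,\cdot\rangle_T$ and the one in the definition of $f_{\underline{i}}$ — matched correctly, so that slot $m$ of the index sequence pairs with slot $m$, and one must read $\|p\|$ consistently as the number of inversions of a permutation of the ordered index set in both antisymmetrizations. Getting either convention backwards would either destroy the vanishing of the off-diagonal terms or spoil the diagonal sum, so before trusting the general formula I would verify the $k=2$ case explicitly, in the spirit of the example worked out earlier in this section.
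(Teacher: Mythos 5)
Your proposal is correct and follows essentially the same route as the paper's own proof: invert $\pi_{\pm}^{k}$ via the antisymmetrization lemma, expand the double sum over permutations, use duality of the bases to kill off-diagonal terms and force $p'=p$ on the diagonal, and evaluate $\sum_{p}q^{2\|p\|}=q^{k(k-1)/2}[k]!$. The bookkeeping of the two order reversals that you flag as the main risk is handled exactly as you describe, with slot $m$ pairing against slot $m$.
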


\begin{proof}
Recall that the pairing is defined by
\[
\langle f_{\underline{i}},e_{\underline{j}}\rangle=\langle(\pi_{-}^{k})^{-1}(f_{\underline{i}}),(\pi_{+}^{k})^{-1}(e_{\underline{j}})\rangle.
\]
From the previous lemma we have
\[
(\pi_{-}^{k})^{-1} (f_{\underline{i}}) = \frac{q^{- k (k - 1) / 2}}{[k]!} \mathbf{A} (f_{\underline{i}}), \quad
(\pi_{+}^{k})^{-1} (e_{\underline{j}}) = \frac{q^{- k (k - 1) / 2}}{[k]!} \mathbf{A} (e_{\underline{j}}).
\]
Plugging in these expressions and using the definition of the antisymmetrizers we find
\[
\langle f_{\underline{i}}, e_{\underline{j}} \rangle = \frac{q^{- k (k - 1)}}{[k]!^2} \sum_{p, p^{\prime} \in S_{k}}(-q)^{\|p\| + \|p^{\prime}\|} \langle f_{p(i_{k})} \otimes \cdots \otimes f_{p(i_{1})}, e_{p^{\prime}(j_{1})}\otimes \cdots \otimes e_{p^{\prime}(j_{k})} \rangle.
\]
Using the definition of the pairing $\langle\cdot, \cdot \rangle: \mathfrak{u}_{-}^{\otimes k} \otimes \mathfrak{u}_{+}^{\otimes k} \to \mathbb{C}$ we get
\[
\langle f_{p(i_{k})}\otimes\cdots\otimes f_{p(i_{1})},e_{p^{\prime}(j_{1})}\otimes\cdots\otimes e_{p^{\prime}(j_{k})}\rangle=\langle f_{p(i_{1})},e_{p^{\prime}(j_{1})}\rangle\cdots\langle f_{p(i_{k})},e_{p^{\prime}(j_{k})}\rangle.
\]
Recall that $\langle f_{i},e_{j}\rangle=\delta_{ij}$, since they are dual bases. Therefore it is clear that the pairing is zero unless $\underline{i} = \underline{j}$.
Let us consider this case. The only non-zero terms in the sums are those for which we have $p = p^{\prime}$, by the same argument. Hence we get
\[
\langle f_{p(i_{k})} \otimes \cdots\otimes f_{p(i_{1})}, e_{p(i_{1})} \otimes \cdots \otimes e_{p(i_{k})}\rangle = 1.
\]
Therefore we obtain the expression
\[
\langle f_{\underline{i}}, e_{\underline{i}}\rangle = \frac{q^{- k (k - 1)}}{[k]!^2} \sum_{p \in S_{k}} q^{2\|p\|} = \frac{q^{- k (k - 1) / 2}}{[k]!},
\]
where we have used again the sum $\sum_{p \in S_{k}} q^{2\|p\|} = q^{ k (k - 1) / 2} [k]!$.
\end{proof}

\subsection{General pairings and inner products}
By the results of the previous subsection, it follows that we can rescale the pairing $\langle \cdot, \cdot \rangle_{\Lambda}$ in each degree in such a way that $\{e_{\underline{i}}\}_{\underline{i}}$ and $\{f_{\underline{i}}\}_{\underline{i}}$ are dual bases, similarly to the classical case.
More generally we can rescale by any non-zero complex number.
We introduce some special notation to deal with this case.

\begin{notation}
\label{not:dual-pair}
We denote by $\langle \cdot, \cdot \rangle_{k} : \Lambda_{q}^{k}(\mathfrak{u}_{-}) \otimes \Lambda_{q}^{k}(\mathfrak{u}_{+}) \to \mathbb{C}$ the pairing such that
$$
\langle f_{\underline{i}}, e_{\underline{i}} \rangle_{k} = 1, \quad
\langle f_{\underline{i}}, e_{\underline{j}} \rangle_{k} = 0, \  \underline{i} \neq \underline{j}.
$$
More generally let $\{\lambda_k\}_{k = 0}^N$ be some non-zero complex numbers. Then we set
$$
\langle f_{\underline{i}}, e_{\underline{i}} \rangle_{\lambda, k} = \lambda_{k}, \quad
\langle f_{\underline{i}}, e_{\underline{j}} \rangle_{\lambda, k} = 0, \  \underline{i} \neq \underline{j}.
$$
\end{notation}

\begin{remark}
The definition of the Dolbeault–Dirac operator in \cite{qflag2} uses a dual pairing between $\mathfrak{u}_{-}$ and $\mathfrak{u}_{+}$, so that in principle we should set $\lambda_{1} = 1$.
However it is immediate to see that for $\lambda_{1} \neq 1$ we only get an overall constant, which we can always consider.
\end{remark}

Later on we will also need to introduce an Hermitian inner product $(\cdot, \cdot): \Lambda_{q}^{k}(\mathfrak{u}_{+}) \otimes \Lambda_{q}^{k}(\mathfrak{u}_{+}) \to \mathbb{C}$. This can be first defined on $\mathfrak{u}_{+}$ and then extended to the exterior algebra, as discussed previously.
For our applications we should require the property
$$
(x, a \triangleright z) = (a^{*} \triangleright x, z), \quad
x, z \in \Lambda_{q}^{k}(\mathfrak{u}_{+}), \ a \in U_{q}(\mathfrak{l}).
$$
Here $\triangleright$ denotes the action of $U_{q}(\mathfrak{l})$ on the module $\mathfrak{u}_{+}$ (and its extension to the exterior algebra), while $*$ is the involution of $U_{q}(\mathfrak{g})$ coming from the compact real form.

\begin{lemma}
We can choose $(\cdot, \cdot): \mathfrak{u}_{+} \otimes \mathfrak{u}_{+} \to \mathbb{C}$ such that $(e_{i}, e_{j}) = \delta_{ij}$.
\end{lemma}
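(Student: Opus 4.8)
The plan is to obtain the inner product from the general existence result for compact real forms, and then to arrange orthonormality of the weight basis by an admissible rescaling. First I would recall that $\mathfrak{u}_{+}$ is identified with the fundamental representation of $U_{q}(\mathfrak{sl}_{N})$, which is irreducible. Since $U_{q}(\mathfrak{sl}_{N})$ is itself the quantized enveloping algebra of a simple Lie algebra, the existence statement for Type 1 modules recalled in \cref{sec:notation} provides a positive-definite Hermitian inner product $(\cdot, \cdot)$ on $\mathfrak{u}_{+}$, unique up to a positive scalar, such that $(a \triangleright v, w) = (v, a^{*} \triangleright w)$ for all $a \in U_{q}(\mathfrak{sl}_{N})$. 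This is exactly the compatibility $(x, a \triangleright z) = (a^{*} \triangleright x, z)$ demanded above, restricted to the semisimple part of the Levi factor.

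Next I would use orthogonality of distinct weights. The vectors $e_{i}$ are weight vectors with pairwise distinct weights $\mu_{i}$, since the fundamental representation has one-dimensional weight spaces. Testing the compatibility against the self-adjoint generators $K_{j}$ (recall $K_{j}^{*} = K_{j}$), which act diagonally with positive real eigenvalues, gives $(q^{(\alpha_{j}, \mu_{i})} - q^{(\alpha_{j}, \mu_{l})})(e_{i}, e_{l}) = 0$ for all $j$; choosing $j$ so that the two exponents differ forces $(e_{i}, e_{l}) = 0$ whenever $i \neq l$.

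It then remains to normalize and to extend compatibility to the whole Levi factor. Positive definiteness yields $(e_{i}, e_{i}) > 0$, so I would rescale each $e_{i}$ by $(e_{i}, e_{i})^{-1/2}$. By \cref{rmk:rescale-basis} such a rescaling leaves the eigenvectors and eigenvalues of $\widehat{R}$ unchanged, and hence leaves all the symmetric and exterior algebra relations of \cref{prop:exterior-rels} intact, so the rescaled vectors may still be denoted $\{e_{i}\}$; after this step $(e_{i}, e_{j}) = \delta_{ij}$. Finally, passing from $U_{q}(\mathfrak{sl}_{N})$ to all of $\Uql$ only adds the grouplike generator $K_{N}^{\pm 1}$, which is self-adjoint and acts diagonally on the weight basis with real eigenvalues; the identity $(x, K_{N} \triangleright z) = (K_{N} \triangleright x, z)$ therefore holds automatically, and together with the first step this gives compatibility for all of $\Uql$.

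The calculation is essentially routine, so there is no serious obstacle. The one point worth flagging is that the invariant inner product does \emph{not} render the $e_{i}$ orthonormal on the nose — their norms differ by powers of $q$, as a one-line $U_{q}(\mathfrak{sl}_{2})$ computation using $F_{1}^{*} = E_{1} K_{1}^{-1}$ shows — so the rescaling step is genuinely needed, and it is precisely \cref{rmk:rescale-basis} that guarantees it costs us nothing.
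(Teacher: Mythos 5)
Your proposal is correct and follows essentially the same route as the paper: both arguments use the self-adjointness of the $K$ generators together with the distinctness of the weights of the $e_i$ to force orthogonality, and then invoke the rescaling freedom of \cref{rmk:rescale-basis} to normalize the diagonal entries. Your additional remarks — the explicit appeal to the existence/uniqueness of the invariant inner product, the observation that the unrescaled norms differ by powers of $q$, and the check that the extra Cartan generator of $\Uql$ causes no trouble — are all accurate and merely make explicit what the paper leaves implicit.
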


\begin{proof}
Since $K_{k}^{*} = K_{k}$ we must have $(e_{i}, K_{k} \triangleright e_{j}) = (K_{k} \triangleright e_{i}, e_{j})$ for all $i, j, k$.
Using the fact that $\{e_{i}\}$ is a weight basis we have $K_{k} \triangleright e_{i} = q^{(\alpha_{k}, \beta_{i})} e_{i}$, where $\beta_{i}$ denotes the weight of $e_{i}$.
Plugging this into the previous relation we get  $q^{(\alpha_{k}, \beta_{j})} (e_{i}, e_{j}) = q^{(\alpha_{k}, \beta_{i})} (e_{i}, e_{j})$.
Since the weights are distinct, this implies that $(e_{i}, e_{j}) = 0$ for $i \neq j$. Finally we can always rescale the weight basis, as in \cref{rmk:rescale-basis}, to set $(e_{i}, e_{i}) = 1$ for all $i$.
\end{proof}

The extension of this Hermitian inner product to the exterior algebra proceeds exactly as for the dual pairing.
As in that case, we introduce some notation.

\begin{notation}
\label{not:herm-prod}
We denote by $(\cdot, \cdot)_k : \Lambda_{q}^{k}(\mathfrak{u}_{+}) \otimes \Lambda_{q}^{k}(\mathfrak{u}_{+}) \to \mathbb{C}$ the Hermitian product such that
$$
(e_{\underline{i}}, e_{\underline{i}})_{k} = 1, \quad
(e_{\underline{i}}, e_{\underline{j}})_{k} = 0, \  \underline{i} \neq \underline{j}.
$$
More generally let $\{\lambda_k^\prime\}_{k = 0}^N$ be some non-zero positive numbers. Then we set
$$
(e_{\underline{i}}, e_{\underline{i}})_{\lambda^{\prime}, k} = \lambda_{k}^{\prime}, \quad
(e_{\underline{i}}, e_{\underline{j}})_{\lambda^{\prime}, k} = 0, \  \underline{i} \neq \underline{j}.
$$
\end{notation}

\begin{remark}
It can be seen that $\Lambda_{q}^{k}(\mathfrak{u}_{\pm})$ are irreducible $U_{q}(\mathfrak{sl}_{N})$-modules, so that the choice of the pairing $\langle \cdot, \cdot \rangle$ and of the Hermitian inner product $(\cdot, \cdot)$ in each degree is unique up to a constant.
Therefore the numbers $\{\lambda_k\}_{k = 0}^N$ and $\{\lambda_k^\prime\}_{k = 0}^N$ parametrize all such choices.
\end{remark}

In the following we will refer to the special choices $\langle \cdot, \cdot\rangle_{k}$ and $(\cdot, \cdot)_{k}$ as the \emph{normalized} pairings and the inner products respectively.
These will make certain computations easier.
On the other hand, the introduction of the rescalings $\{\lambda_k\}_{k = 0}^N$ and $\{\lambda^\prime_k\}_{k = 0}^N$ will play a crucial role in the computation of the square of the Dolbeault–Dirac operator.

\section{The quantum Clifford algebra}
\label{sec:clifford}

In this section we will consider the quantum Clifford algebra with $\mathfrak{u}_{+}$ being the fundamental representation of $\mathfrak{sl}_{N}$, following the general setting of \cite{qflag2}.
We will obtain commutation relations for the generators.
These relations will depend on the choice of the pairings and the inner products, as introduced in the previous section.
At first we will perform the computations in the normalized case. The general case will then be obtained by rescaling.

\subsection{Definitions}

According to \cite[Definition 5.2]{qflag2} the quantum Clifford algebra can be defined as $\mathrm{End}(\Lambda_{q}(\mathfrak{u}_{+}))$ together with a certain factorization given by two maps $\gamma_{+}$ and $\gamma_{-}$.
The map $\gamma_{+}$ is simply left multiplication of the algebra $\Lambda_{q}(\mathfrak{u}_{+})$ on itself, that is
$$\gamma_{+}(x) z = x \wedge z, \quad x, z \in \Lambda_{q}(\mathfrak{u}_{+}).$$
The other map $\gamma_{-}$ is defined as follows. With $\langle \cdot, \cdot \rangle$ denoting the pairing between $\Lambda_{q}(\mathfrak{u}_{-})$ and $\Lambda_{q}(\mathfrak{u}_{+})$, the action $\gamma_{-}$ of $\Lambda_{q}(\mathfrak{u}_{-})$ on $\Lambda_{q}(\mathfrak{u}_{+})$ is given by
$$
\langle w, \gamma_{-}(y) x \rangle = \langle w \wedge y, x\rangle, \quad y, w \in \Lambda_{q}(\mathfrak{u}_{-}), \quad x \in \Lambda_{q}(\mathfrak{u}_{+}).
$$
It can be proven that the map $\Lambda_{q}(\mathfrak{u}_{-}) \otimes \Lambda_{q}(\mathfrak{u}_{+}) \to \mathrm{End}(\Lambda_{q}(\mathfrak{u}_{+}))$ given by $y \otimes x \to \gamma_{-}(y) \gamma_{+}(x)$ is an isomorphism of $U_{q}(\mathfrak{l})$-modules, see \cite[Theorem 5.1]{qflag2}.

\subsection{The operator $\gamma_{-}$}

For the rest of this section we fix the pairing $\langle \cdot, \cdot \rangle_{k}$ defined in \cref{not:dual-pair}, since the relevant computations are easier in this case. Later on we will obtain the relations for the general case by simple rescalings.
We start by computing the explicit action of $\gamma_{-}(f_{a})$ on the basis elements $\{e_{\underline{i}}\}_{\underline{i}}$ defined previously.

\begin{lemma}
\label{lem:cliff1}
Let $e_{\underline{i}} \in \Lambda_{q}^{k}(\mathfrak{u}_{+})$. Then
we have 
\[
\gamma_{-}(f_{a})e_{\underline{i}}=\sum_{r=1}^{k}\delta_{a,i_{r}}(-q)^{r-1}e_{\underline{i}\backslash i_{r}}.
\]
\end{lemma}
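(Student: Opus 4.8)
The plan is to pin down the element $\gamma_{-}(f_a)e_{\underline{i}} \in \Lambda_{q}^{k-1}(\mathfrak{u}_{+})$ by reading off its coordinates in the basis $\{e_{\underline{j}}\}$, where $\underline{j}$ ranges over increasing multi-indices of length $k-1$. Since we are working with the normalized pairing $\langle \cdot, \cdot \rangle_{k}$ of \cref{not:dual-pair}, the families $\{f_{\underline{j}}\}$ and $\{e_{\underline{j}}\}$ are dual bases, so writing $\gamma_{-}(f_a)e_{\underline{i}} = \sum_{\underline{j}} c_{\underline{j}}\, e_{\underline{j}}$ gives $c_{\underline{j}} = \langle f_{\underline{j}}, \gamma_{-}(f_a) e_{\underline{i}} \rangle_{k}$. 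Applying the defining adjunction for $\gamma_{-}$ converts this into a pairing on $\Lambda_{q}^{k}$:
\[
c_{\underline{j}} = \langle f_{\underline{j}} \wedge f_{a}, e_{\underline{i}} \rangle_{k}.
\]
Thus the entire computation reduces to expanding the wedge $f_{\underline{j}} \wedge f_{a}$ in the basis $\{f_{\underline{i}}\}$ of $\Lambda_{q}^{k}(\mathfrak{u}_{-})$ and then pairing against $e_{\underline{i}}$.

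Next I would determine which $\underline{j}$ actually contribute. Because $f_{b} \wedge f_{b} = 0$, the wedge $f_{\underline{j}} \wedge f_{a}$ vanishes whenever $a$ already occurs among the entries of $\underline{j}$; so only $a \notin \underline{j}$ survives, and then $\{a\} \cup \underline{j}$ is a set of $k$ distinct indices. Pairing against $e_{\underline{i}}$ forces this set to equal $\{i_{1}, \dots, i_{k}\}$, so that $a = i_{r}$ for a unique $r$ and $\underline{j} = \underline{i} \setminus i_{r}$. This singles out, for each value $a = i_{r}$, the one multi-index that can contribute, and it is what produces the Kronecker factor $\delta_{a, i_{r}}$ in the statement.

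The only substantive point left is the coefficient. Recall that $f_{\underline{j}} = f_{i_{k}} \wedge \cdots \wedge \widehat{f_{i_{r}}} \wedge \cdots \wedge f_{i_{1}}$ is written in \emph{decreasing} order, so that in $f_{\underline{j}} \wedge f_{i_{r}}$ the generator $f_{i_{r}}$ sits at the far right, behind $f_{i_{r-1}}, \dots, f_{i_{1}}$. Using the relation $f_{i} \wedge f_{j} = -q\, f_{j} \wedge f_{i}$ for $i < j$ from \cref{prop:exterior-dual-rels}, I would commute $f_{i_{r}}$ leftward into its correct slot between $f_{i_{r+1}}$ and $f_{i_{r-1}}$. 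Each of the $r-1$ transpositions moves $f_{i_{r}}$ past some $f_{i_{s}}$ with $i_{s} < i_{r}$, contributing a factor $-q$, so altogether $f_{\underline{j}} \wedge f_{i_{r}} = (-q)^{r-1} f_{\underline{i}}$. Pairing with $e_{\underline{i}}$ via $\langle f_{\underline{i}}, e_{\underline{i}} \rangle_{k} = 1$ gives $c_{\underline{j}} = (-q)^{r-1}$, and summing over all $r$ with $a = i_{r}$ yields the claimed formula.

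The main obstacle I anticipate is getting the reordering factor exactly right: one must simultaneously track the reversed (decreasing) convention built into the definition of $f_{\underline{i}}$ and the precise shape of the anticommutation relation $f_{i} \wedge f_{j} = -q\, f_{j} \wedge f_{i}$, in order to be certain that carrying $f_{i_{r}}$ across the $r-1$ smaller-indexed generators produces precisely $(-q)^{r-1}$, rather than a power involving $q^{-1}$ or a spurious sign. Once this sign-and-$q$ bookkeeping is settled, everything else follows routinely from the duality of the bases and the defining property of $\gamma_{-}$.
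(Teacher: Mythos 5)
Your proposal is correct and follows essentially the same route as the paper's proof: expand $\gamma_{-}(f_a)e_{\underline{i}}$ against the dual basis, use the adjunction $\langle f_{\underline{j}},\gamma_{-}(f_a)e_{\underline{i}}\rangle = \langle f_{\underline{j}}\wedge f_a, e_{\underline{i}}\rangle$, and compute the coefficient $(-q)^{r-1}$ by commuting $f_{i_r}$ past the $r-1$ smaller-indexed generators via $f_i\wedge f_j = -q\,f_j\wedge f_i$. The only blemish is a harmless index slip (the first pairing should carry subscript $k-1$, not $k$); the sign-and-$q$ bookkeeping you were worried about is exactly as you describe.
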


\begin{proof}
Let $f_{\underline{j}} \in \Lambda_{q}^{k - 1} (\mathfrak{u}_{-})$.
By definition of $\gamma_{-}$ we have
\[
\langle f_{\underline{j}},\gamma_{-}(f_{a})e_{\underline{i}}\rangle_{k-1}=\langle f_{\underline{j}}\wedge f_{a},e_{\underline{i}}\rangle_{k}.
\]
First consider the case $a\cap\underline{i} =  \emptyset$. Using the definition of the pairing we find $\langle f_{\underline{j}} \wedge  f_{a}, e_{\underline{i}} \rangle_{k} = 0$, since $e_{\underline{i}}$ does not contain the element $e_{a}$.
On the other hand consider $a \cap \underline{i} \neq \emptyset$, that is $a = i_{r}$ for some $r$. Then the pairing is non-zero if $\underline{j} = \underline{i} \backslash i_{r}$.
Therefore we can write
\[
\langle f_{\underline{j}},\gamma_{-}(f_{a})e_{\underline{i}}\rangle_{k}=\sum_{r=1}^{k}\delta_{a,i_{r}}\delta_{\underline{j},\underline{i}\backslash i_{r}}\langle f_{\underline{i}\backslash i_{r}}\wedge f_{a},e_{\underline{i}}\rangle_{k}.
\]
Recall that the multi-indices $\underline{i}$ are ordered, so that $i_{c} < i_{r}$ for $c < r$.
From \cref{prop:exterior-dual-rels} we have the relations $f_{a} \wedge f_{b} = -q f_{b} \wedge f_{a}$ for $a < b$.
Then we compute
\[
\begin{split}\langle f_{\underline{i}\backslash i_{r}}\wedge f_{a},e_{\underline{i}}\rangle_{k} & =\langle f_{i_{k}}\wedge\cdots\wedge\hat{f}_{i_{r}}\wedge\cdots\wedge f_{i_{1}}\wedge f_{i_{r}},e_{\underline{i}}\rangle_{k}\\
 & =(-q)^{r-1}\langle f_{i_{k}}\wedge\cdots\wedge f_{i_{r}}\wedge\cdots\wedge f_{i_{1}},e_{\underline{i}}\rangle_{k}\\
 & =(-q)^{r-1}\langle f_{\underline{i}},e_{\underline{i}}\rangle_{k}=(-q)^{r-1}.
\end{split}
\]
From this we conclude that
\[
\langle f_{\underline{j}},\gamma_{-}(f_{a})e_{\underline{i}}\rangle_{k}=\sum_{r=1}^{k}\delta_{a,i_{r}}\delta_{\underline{j},\underline{i}\backslash i_{r}}(-q)^{r-1}.
\]
Finally expanding in terms of the the dual bases we find the expression
\[
\gamma_{-}(f_{a}) e_{\underline{i}} = \sum_{j_{1} < \cdots < j_{k}} \langle f_{\underline{j}}, \gamma_{-} (f_{a}) e_{\underline{i}} \rangle_{k} e_{\underline{j}} = \sum_{r = 1}^{k} \delta_{a, i_{r}} (-q)^{r - 1} e_{\underline{i} \backslash i_{r}}.
\qedhere
\]
\end{proof}

\begin{remark}
For $q = 1$ we recover the classical expression
\[
\gamma_{-}(f_{a}) e_{\underline{i}} = \sum_{r = 1}^{k} (-1)^{r - 1} \langle f_{a}, e_{i_{r}}\rangle e_{i_{1}} \wedge \cdots \wedge \hat{e}_{i_{r}} \wedge \cdots \wedge e_{i_{k}},
\]
keeping in mind that for the dual pairing we have $\langle f_{a}, e_{i_{r}} \rangle =\delta_{a, i_{r}}$.
\end{remark}

\subsection{The adjoint of $\gamma_{-}$}

In the following we will need commutation relations between the elements $\gamma_{-}(\cdot)$ and their adjoints.
Similarly to the previous subsection, we fix the Hermitian inner product $(\cdot, \cdot)_{k}$ defined in \cref{not:herm-prod} to simplify the computations.
Recall that with this choice $\{e_{\underline{i}}\}_{\underline{i}}$ is an orthonormal basis.
The adjoint of $\gamma_{-}(f_{a})$ is defined as usual by
\[
(\gamma_{-}(f_{a})^{*} x, z)_{k + 1} = (x, \gamma_{-}(f_{a}) z)_{k}, \quad
x \in \Lambda_{q}^{k}(\mathfrak{u}_{+}), \  z \in \Lambda_{q}^{k + 1}(\mathfrak{u}_{+}).
\]
In the next lemma we show that, for this particular choice, we have $\gamma_{-}(f_{a})^{*} = \gamma_{+}(e_{a})$.
This will no longer be the case when we will move to the more general situation.

\begin{lemma}
\label{lem:cliff2}
We have the equality $\gamma_{-}(f_{a})^{*} = \gamma_{+}(e_{a})$.
\end{lemma}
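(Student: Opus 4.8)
The plan is to verify the defining adjoint relation
\[
(\gamma_{+}(e_{a}) x, z)_{k + 1} = (x, \gamma_{-}(f_{a}) z)_{k}, \quad
x \in \Lambda_{q}^{k}(\mathfrak{u}_{+}), \ z \in \Lambda_{q}^{k + 1}(\mathfrak{u}_{+}),
\]
directly on basis vectors. Since $\{e_{\underline{i}}\}_{\underline{i}}$ is orthonormal for the normalized inner product $(\cdot, \cdot)_{k}$, it suffices to take $x = e_{\underline{i}}$ with $|\underline{i}| = k$ and $z = e_{\underline{j}}$ with $|\underline{j}| = k + 1$, compute both matrix elements, and check that they agree. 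Note that $q \in \mathbb{R}$, so all the coefficients $(-q)^{r-1}$ are real and the Hermitian conjugation plays no role in the bookkeeping.

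For the left-hand side I would expand $\gamma_{+}(e_{a}) e_{\underline{i}} = e_{a} \wedge e_{\underline{i}}$ using the relations of \cref{prop:exterior-rels}. If $a \in \underline{i}$ the product vanishes, since a repeated index is killed by $e_{a} \wedge e_{a} = 0$ together with the skew relations. If $a \notin \underline{i}$, let $s$ be the position that $a$ occupies in the sorted union $\underline{j} = \underline{i} \cup \{a\}$, so that $i_{1} < \cdots < i_{s-1} < a < i_{s} < \cdots < i_{k}$. Rewriting $e_{j} \wedge e_{i} = -q\, e_{i} \wedge e_{j}$ for $i < j$, each of the $s-1$ transpositions moving $e_{a}$ past the smaller generators $e_{i_{1}}, \dots, e_{i_{s-1}}$ contributes a factor $-q$, giving $e_{a} \wedge e_{\underline{i}} = (-q)^{s-1} e_{\underline{j}}$. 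Hence $(\gamma_{+}(e_{a}) e_{\underline{i}}, e_{\underline{j}})_{k+1}$ equals $(-q)^{s-1}$ exactly when $\underline{j} = \underline{i} \cup \{a\}$ with $a$ in position $s$, and vanishes otherwise.

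For the right-hand side I would invoke the formula of \cref{lem:cliff1}, namely $\gamma_{-}(f_{a}) e_{\underline{j}} = \sum_{r=1}^{k+1} \delta_{a, j_{r}} (-q)^{r-1} e_{\underline{j} \backslash j_{r}}$, and pair with $e_{\underline{i}}$. By orthonormality the only surviving term is the one with $j_{r} = a$ and $\underline{j} \backslash j_{r} = \underline{i}$; writing $s$ for the position of $a$ in $\underline{j}$, this yields $(e_{\underline{i}}, \gamma_{-}(f_{a}) e_{\underline{j}})_{k} = (-q)^{s-1}$ precisely when $a \in \underline{j}$ and $\underline{i} = \underline{j} \backslash \{a\}$. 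These two conditions are identical to those on the left, and the two values coincide, which establishes the claimed equality of adjoints.

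The computation is essentially routine once the two sides are set up, so the only real obstacle is the sign and power-of-$q$ bookkeeping: one must check that the reordering exponent obtained by sliding $e_{a}$ into its correct slot on the left matches the index $r-1$ appearing in the $\gamma_{-}$-formula on the right. The key point making these agree is that both are governed by the same combinatorial datum, the position $s$ of $a$ in $\underline{j}$, and that the exterior relations of \cref{prop:exterior-rels} for the $e$'s and those of \cref{prop:exterior-dual-rels} for the $f$'s are arranged (via the reversed ordering in $f_{\underline{i}}$) to produce consistent factors of $-q$.
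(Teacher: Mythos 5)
Your proposal is correct and follows essentially the same route as the paper: both compute the matrix elements $(e_{\underline{i}},\gamma_{-}(f_{a})e_{\underline{j}})_{k}$ via \cref{lem:cliff1} and the reordering coefficient $(-q)^{s-1}$ for $e_{a}\wedge e_{\underline{i}}$ via \cref{prop:exterior-rels}, and observe that both are governed by the position of $a$ in $\underline{i}\cup\{a\}$. The only cosmetic difference is that the paper first assembles $\gamma_{-}(f_{a})^{*}e_{\underline{i}}$ by expanding in the orthonormal basis and then compares operators, whereas you verify the defining adjoint identity entrywise; the computations are identical.
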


\begin{proof}
Take $e_{\underline{i}} \in \Lambda_{q}^{k}(\mathfrak{u}_{+})$ and
$e_{\underline{j}} \in \Lambda_{q}^{k+1}(\mathfrak{u}_{+})$. Since
the basis is orthonormal we have
\[
\gamma_{-}(f_{a})^{*} e_{\underline{i}} =\sum_{j_{1} < \cdots < j_{k + 1}} (\gamma_{-}(f_{a})^{*} e_{\underline{i}}, e_{\underline{j}})_{k + 1} e_{\underline{j}}.
\]
Using the explicit action of $\gamma_{-}(f_{a})$ we have
\[
(e_{\underline{i}}, \gamma_{-}(f_{a}) e_{\underline{j}})_{k} = \sum_{r = 1}^{k + 1}\delta_{a, j_{r}} (-q)^{r - 1}(e_{\underline{i}}, e_{\underline{j} \backslash j_{r}})_{k}.
\]
The inner product $(e_{\underline{i}}, e_{\underline{j} \backslash j_{r}})_{k}$ is non-zero for $\underline{i} = \underline{j}\backslash j_{r}$.
Notice that this implies that $\underline{i}$ does not contain $a$.
Then for $\underline{i} \cap a \neq \emptyset$ we get $\gamma_{-}(f_{a})^{*} e_{\underline{i}} = 0$.
Consider then $\underline{i} \cap a = \emptyset$. In this case we can rewrite the condition $\underline{i} = \underline{j} \backslash j_{r}$
as $\underline{j} = \underline{i} \cup j_{r} = \underline{i} \cup a$. Then we obtain the expression
\[
(\gamma_{-}(f_{a})^{*} e_{\underline{i}}, e_{\underline{j}})_{k + 1} = \delta_{\underline{j}, \underline{i} \cup a} (-q)^{r-1}.
\]
Plugging back in we get the result
\[
\gamma_{-}(f_{a})^{*} e_{\underline{i}} =\sum_{j_{1} < \cdots < j_{k + 1}}\delta_{\underline{j}, \underline{i} \cup a}(-q)^{r - 1} e_{\underline{j}} = (-q)^{r - 1}e_{\underline{i} \cup a}.
\]

On the other hand consider $\gamma_{+}(e_{a}) e_{\underline{i}} = e_{a} \wedge e_{\underline{i}}$. For $\underline{i} \cap a \neq \emptyset$ this is clearly zero. For $\underline{i} \cap a = \emptyset$ instead
there exists an index $r$ such that
\[
i_{1}<\cdots<i_{r-1}<a<i_{r}<\cdots<i_{k}.
\]
From \cref{prop:exterior-rels} we have $e_{a} \wedge e_{b} = -q^{-1} e_{b} \wedge e_{a}$ for $a < b$. Then we can rewrite
\[
\begin{split}
\gamma_{+}(e_{a}) e_{\underline{i}}
& = e_{a} \wedge e_{i_{1}} \wedge \cdots \wedge e_{i_{r - 1}} \wedge e_{i_{r}} \wedge \cdots\wedge e_{i_{k}}\\
& = (-q)^{r - 1} e_{i_{1}} \wedge \cdots \wedge e_{i_{r}} \wedge e_{a} \wedge e_{i_{r + 1}} \cdots \wedge e_{i_{k}}
= (-q)^{r - 1} e_{\underline{i} \cup a}.
\end{split}
\]
Comparing the two expressions we see that $\gamma_{-}(f_{a})^{*} = \gamma_{+}(e_{a})$.
\end{proof}

\subsection{Commutation relations}

In this subsection we will obtain commutation relations for the operators $\gamma_{-}(f_{i})$ and $\gamma_{-}(f_{j})^{*}$.
We adopt the following notations.

\begin{notation}
We write $\intm{i} = \gamma_{-}(f_{i})$ and $\extm{j} = \gamma_{-}(f_{j})^{*}$, where the letters are chosen to remind the reader that they correspond to interior and exterior multiplication, respectively.
\end{notation}

We start with the easier case, that is $i \neq j$.

\begin{proposition}
For $i \neq j$ we have $\extm{i} \intm{j} = - q^{-1} \intm{j} \extm{i}$.
\end{proposition}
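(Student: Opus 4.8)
The plan is to compute both sides of the claimed relation $\extm{i}\intm{j} = -q^{-1}\intm{j}\extm{i}$ by letting them act on an arbitrary basis element $e_{\underline{k}} \in \Lambda_q^k(\mathfrak{u}_+)$ and comparing. Since $\intm{j} = \gamma_-(f_j)$ and $\extm{i} = \gamma_-(f_i)^* = \gamma_+(e_i)$ (by \cref{lem:cliff2}, using the normalized inner product), I have explicit formulas for both operators at my disposal: \cref{lem:cliff1} gives the action of $\intm{j}$, and the proof of \cref{lem:cliff2} gives $\extm{i}e_{\underline{k}} = (-q)^{r-1}e_{\underline{k}\cup i}$ where $r$ is the insertion position of $i$ (when $i \notin \underline{k}$, and $0$ otherwise). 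So this is fundamentally a bookkeeping computation tracking signs and powers of $q$.

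**First I would** dispose of the degenerate cases. Applying $\extm{i}\intm{j}$ to $e_{\underline{k}}$ is nonzero only if $j \in \underline{k}$ (so that $\intm{j}$ does not kill it) and $i \notin \underline{k}$ (so that the subsequent wedge by $e_i$ survives, since after removing $j$ the index $i$ is still absent as $i \neq j$). The other ordering $\intm{j}\extm{i}$ acting on $e_{\underline{k}}$ is nonzero only if $i \notin \underline{k}$ and $j \in \underline{k}\cup i = \underline{k}$, i.e. again $i \notin \underline{k}$ and $j \in \underline{k}$. So both sides vanish outside the common case $i \notin \underline{k}$, $j \in \underline{k}$, and I only need to check the nonzero case.

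**In the main case** I would set up careful notation for positions. Suppose $j = k_s$ sits in position $s$ of the ordered multi-index $\underline{k}$, and let $r$ denote the position at which $i$ would be inserted into $\underline{k}$ (equivalently, $r-1$ is the number of elements of $\underline{k}$ smaller than $i$). For $\extm{i}\intm{j}$, first $\intm{j}$ produces $(-q)^{s-1}e_{\underline{k}\setminus j}$, then $\extm{i}$ inserts $i$ into $\underline{k}\setminus j$, contributing a sign depending on how many elements of $\underline{k}\setminus j$ are below $i$. For $\intm{j}\extm{i}$, first $\extm{i}$ inserts $i$ into $\underline{k}$ with factor $(-q)^{r-1}$, then $\intm{j}$ removes $j$ from $\underline{k}\cup i$ at its new position. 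The heart of the argument is comparing the insertion/removal positions in the two orders: inserting $i$ then removing $j$ versus removing $j$ then inserting $i$ shifts the relevant position indices by exactly one relative to each other, precisely when $i$ and $j$ straddle in a certain way, and this single-position discrepancy is what produces the net factor $-q^{-1}$.

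**The main obstacle** I anticipate is exactly this position-shift comparison: the power of $(-q)$ picked up in each ordering depends on whether $i < j$ or $i > j$, since inserting $i$ before or after the position of $j$ changes which exponent appears. I expect to split into the two cases $i < j$ and $i > j$, verify in each that the exponents of $(-q)$ differ by a controlled amount, and confirm that in both cases the ratio of the two sides is uniformly $-q^{-1}$ (the relation $-q^{-1} = (-q)^{-1}\cdot q^{0}$ reconciling the combinatorial sign with the intended quantum factor). Getting these exponent comparisons to match in both orderings, without an off-by-one error, is the only genuinely delicate point; everything else is a direct substitution of the formulas from \cref{lem:cliff1} and the proof of \cref{lem:cliff2}.
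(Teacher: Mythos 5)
Your proposal follows essentially the same route as the paper's proof: act on a basis vector $e_{\underline{k}}$, reduce to the common nonvanishing case $j\in\underline{k}$, $i\notin\underline{k}$, and compare the exponents of $(-q)$ arising from the insertion and removal positions in the two orderings, splitting into $i<j$ and $i>j$. The position-shift comparison you single out as the delicate point is exactly the paper's concluding step (the position of $i$ is unchanged and that of $j$ shifts by one when $i<j$, and vice versa when $i>j$), and in both cases it yields the uniform factor $-q^{-1}$ as you anticipate.
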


\begin{proof}
First we act with $\extm{i} \intm{j}$ on a basis vector $e_{\underline{k}}$. Suppose that $(\underline{k} \backslash j) \cap i  = \emptyset$ and $\underline{k} \cap j \neq \emptyset$, since otherwise the result is zero.
Let $r$ be the position of $i$ inside $(\underline{k} \backslash j) \cup i$
and $s$ be the position of $j$ inside $\underline{k}$. Using \cref{lem:cliff1} and \cref{lem:cliff2} we compute
\[
\extm{i} \intm{j} e_{\underline{k}}
= (-q)^{s - 1} \extm{i} e_{\underline{k} \backslash j}
= (-q)^{r - 1} (-q)^{s - 1} e_{(\underline{k} \backslash  j) \cup i}.
\]
Similarly we act with $\intm{j} \extm{i}$ on $e_{\underline{k}}$.
In this case let $r^{\prime}$ be the position of $i$ inside $\underline{k} \cup i$ and $s^{\prime}$ be the position of $j$ inside $\underline{k} \cup i$.
Then we have
\[
\intm{j} \extm{i} e_{\underline{k}}
= (-q)^{r^{\prime} - 1} \intm{j} e_{\underline{k} \cup i}
= (-q)^{r^{\prime} - 1} (-q)^{s^{\prime} - 1}e_{(\underline{k} \cup i) \backslash j}.
\]
Since $i \neq j$ we clearly have $(\underline{k} \backslash j) \cup i = (\underline{k} \cup i)\backslash j$. Therefore
\[
\extm{i} \intm{j} e_{\underline{k}}
= (-q)^{r - r^{\prime}} (-q)^{s - s^{\prime}}\intm{j} \extm{i} e_{\underline{k}}.
\]

Now suppose $i < j$. Then the positions of $i$ inside $(\underline{k} \backslash j) \cup i$
and $\underline{k} \cup i$ coincide, that is $r^{\prime} = r$. On the other hand the position of $j$ inside $\underline{k} \cup i$ is shifted
by one with respect to its position inside $\underline{k}$, that is $s^{\prime} = s + 1$.
In this case we have
\[
\extm{i} \intm{j} e_{\underline{k}}
= -q^{-1} \intm{j} \extm{i} e_{\underline{k}},\quad i < j.
\]
Similarly consider $i > j$. Then the position of $i$ inside $(\underline{k} \backslash j) \cup i$
is smaller than that inside $\underline{k} \cup i$, that is $r^{\prime} = r + 1$.
On the other hand we have $s^{\prime} = s$. Then also in this case we get
\[
\extm{i} \intm{j} e_{\underline{k}}
= -q^{-1} \intm{j} \extm{i} e_{\underline{k}}, \quad i > j. \qedhere
\]
\end{proof}

The most interesting relations are obtained in the case $i = j$. These are significantly more complicated than the case $i \neq j$.
It is fortunate that also in this case we obtain quadratic-constant relations, since there is no a priori reason why this should be the case.

\begin{proposition}
\label{prop:cliff-iej}
For $i \leq N$ we have
$$\extm{i} \intm{i} - q(q - q^{-1}) \sum_{j = 1}^{i - 1} \extm{j} \intm{j} + \intm{i} \extm{i} = \mathrm{id}.$$
\end{proposition}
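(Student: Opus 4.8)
The plan is to verify this operator identity by evaluating both sides on the basis $\{e_{\underline{k}}\}$ of $\Lambda_q^k(\mathfrak{u}_+)$, working throughout in the normalized case, where $\extm{j} = \gamma_+(e_j)$ by \cref{lem:cliff2}. The key observation is that each composite $\extm{j} \intm{j}$ (and likewise $\intm{i} \extm{i}$) acts diagonally on this basis, so the entire computation reduces to tracking a single scalar eigenvalue on each $e_{\underline{k}}$, and the proposition becomes a scalar identity.

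First I would compute $(\extm{i} \intm{i} + \intm{i} \extm{i}) e_{\underline{k}}$. Using \cref{lem:cliff1} and \cref{lem:cliff2}: if $i \in \underline{k}$ sits at position $s$, then $\intm{i}$ removes it with a factor $(-q)^{s-1}$ and $\extm{i}$ reinserts it at the same position with another factor $(-q)^{s-1}$, giving $q^{2(s-1)} e_{\underline{k}}$, while $\intm{i} \extm{i} e_{\underline{k}} = 0$ since $\extm{i}$ annihilates anything already containing $e_i$. If instead $i \notin \underline{k}$, the roles are exchanged: $\extm{i} \intm{i} e_{\underline{k}} = 0$ and $\intm{i} \extm{i} e_{\underline{k}} = q^{2(t-1)} e_{\underline{k}}$, where $t$ is the position $i$ would occupy in $\underline{k} \cup i$. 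In both cases the exponent equals $2p$, where $p = |\{ j \in \underline{k} : j < i \}|$ is the number of entries of $\underline{k}$ strictly below $i$, since $s - 1 = p$ in the first case and $t - 1 = p$ in the second. Thus $(\extm{i} \intm{i} + \intm{i} \extm{i}) e_{\underline{k}} = q^{2p} e_{\underline{k}}$.

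Next I would evaluate the correction term $\sum_{j=1}^{i-1} \extm{j} \intm{j}$. Only indices $j \in \underline{k}$ with $j < i$ contribute, since $\intm{j}$ kills $e_{\underline{k}}$ whenever $j \notin \underline{k}$. Writing the entries of $\underline{k}$ below $i$ as $k_1 < \cdots < k_p$, the same position-counting as above shows $\extm{k_a} \intm{k_a} e_{\underline{k}} = q^{2(a-1)} e_{\underline{k}}$, so this sum contributes the geometric series $\sum_{a=1}^{p} q^{2(a-1)} = (q^{2p} - 1)/(q^2 - 1)$.

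Finally I would assemble the pieces. Using the identity $q(q - q^{-1}) = q^2 - 1$, the left-hand side applied to $e_{\underline{k}}$ becomes $q^{2p} - (q^2 - 1) \cdot (q^{2p}-1)/(q^2-1) = q^{2p} - (q^{2p} - 1) = 1$, so both sides agree on every basis vector and hence as operators on $\Lambda_q(\mathfrak{u}_+)$. The delicate point will be the careful bookkeeping of positions and signs in the first step, in particular confirming that removal followed by reinsertion of $e_i$ returns exactly the position $s$, and that the per-term exponents $2(a-1)$ assemble into a clean geometric series. Once this is in place the identity $q(q - q^{-1}) = q^2 - 1$ makes the cancellation immediate, which also explains the otherwise mysterious coefficient appearing in the statement.
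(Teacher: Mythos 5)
Your proof is correct, and it takes a different route from the paper's. The paper proves the identity by induction on $i$: writing $S_i$ for the left-hand side, it establishes the telescoping relation $S_{i} = S_{i-1} - q^{2} \extm{i - 1} \intm{i - 1} - \intm{i - 1} \extm{i - 1} + \extm{i} \intm{i} + \intm{i} \extm{i}$ and then runs a case analysis on whether $i$ and $i-1$ belong to $\underline{k}$. You instead evaluate every term in closed form: each $\extm{j}\intm{j}$ and $\intm{i}\extm{i}$ acts diagonally with eigenvalue $q^{2(\text{position}-1)}$, the anticommutator contributes $q^{2p}$ with $p = |\{j \in \underline{k} : j < i\}|$, and the correction sum is exactly the geometric series $\sum_{a=1}^{p} q^{2(a-1)} = (q^{2p}-1)/(q^2-1)$, which cancels against $q^{2p}-1$ after multiplying by $q(q-q^{-1}) = q^2-1$. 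The underlying computational ingredient — the diagonal action of $\extm{j}\intm{j}$ on $e_{\underline{k}}$ with exponent governed by the position of $j$, read off from \cref{lem:cliff1} and \cref{lem:cliff2} — is the same in both arguments, and your position-counting checks out (reinserting $e_i$ into $e_{\underline{k}\backslash i}$ does land it back at position $s$, since $\underline{k}$ is ordered, and $k_a$ sits at position $a$ because the entries below $i$ are the initial segment of $\underline{k}$). What your version buys is a conceptual explanation of the otherwise mysterious coefficient $q(q-q^{-1})$: it is precisely the denominator $q^2 - 1$ of the geometric series, so the correction term is forced. What the paper's induction buys is that only the increment $S_i - S_{i-1}$ ever needs to be analyzed, which keeps each step local to the indices $i$ and $i-1$; but on balance your direct argument is shorter and more transparent.
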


\begin{proof}
Denote the above expression by $S_{i}$.
We start with $i = 1$, in which case we have $S_{1} = \extm{1} \intm{1} + \intm{1} \extm{1}$.
We act on the basis vectors $e_{\underline{k}}$, where $\underline{k} = (k_{1}, \cdots, k_{n})$. Consider the case $\underline{k} \cap 1 \neq\emptyset$, so that $\extm{1} e_{\underline{k}} = 0$. We must have $k_{1} = 1$, since $\underline{k}$ is ordered. Then we get
$$
S_{1} e_{\underline{k}}
= \extm{1} \intm{1} e_{\underline{k}}
= \extm{1} e_{\underline{k} \backslash 1}
= e_{\underline{k}}.
$$
Similarly consider the case $\underline{k} \cap 1 = \emptyset$, so that $\intm{1} e_{\underline{k}} = 0$. Then we have
$$
S_{1} e_{\underline{k}}
= \intm{1} \extm{1} e_{\underline{k}}
= \intm{1} e_{\underline{k} \cup 1}
= e_{\underline{k}}.
$$

We proceed by induction over $i$. Observe that by splitting the sum inside $S_{i}$ (and by summing and subtracting the term $\intm{i - 1} \extm{i - 1}$) we can write
\[
\begin{split}S_{i}
& = \extm{i - 1} \intm{i - 1} - q (q - q^{-1}) \sum_{j = 1}^{i - 2} \extm{j} \intm{j} + \mathfrak{i}_{i-1}\mathfrak{e}_{i-1}\\
& -q^{2} \extm{i - 1} \intm{i - 1} - \intm{i - 1} \extm{i - 1} + \extm{i} \intm{i} + \intm{i} \extm{i}.
\end{split}
\]
The first line coincides with $S_{i - 1}$, so that we have the identity
$$
S_{i} = S_{i-1} - q^{2} \extm{i - 1} \intm{i - 1} - \intm{i - 1} \extm{i - 1} + \extm{i} \intm{i} + \intm{i} \extm{i}.
$$
By the induction hypothesis we get
\[
S_{i} e_{\underline{k}}
= (1 - q^{2} \extm{i - 1} \intm{i - 1} - \intm{i - 1} \extm{i - 1} + \extm{i} \intm{i} + \intm{i} \extm{i}) e_{\underline{k}}.
\]
As for the case $i = 1$ we distinguish between $\underline{k} \cap i \neq \emptyset$
and $\underline{k} \cap i = \emptyset$.
First consider the case $\underline{k} \cap i\neq \emptyset$ and denote
by $r$ the position of $i$ inside $\underline{k}$. Then using the explicit expressions for $\extm{i}$ and $\intm{i}$ we find $\extm{i} \intm{i} e_{\underline{k}} = (-q)^{r - 1} \extm{i} e_{\underline{k} \backslash i} = q^{2(r - 1)} e_{\underline{k}}$
and $\intm{i} \extm{i} e_{\underline{k}} = 0$. Therefore
$$
S_{i} e_{\underline{k}}
= (1 - q^{2} \extm{i - 1} \intm{i - 1} - \intm{i - 1} \extm{i - 1} + q^{2(r - 1)}) e_{\underline{k}}.
$$
Suppose that $\underline{k} \cap (i - 1) \neq \emptyset$. Then the position of $i - 1$ inside $\underline{k}$ is necessarily $r - 1$, since $\underline{k}$ is ordered. Repeating the argument above we
get $\extm{i - 1} \intm{i - 1} e_{\underline{k}} = q^{2(r - 2)} e_{\underline{k}}$
and $\intm{i - 1} \extm{i - 1} e_{\underline{k}} = 0$. Plugging these expressions in we find
$$
S_{i} e_{\underline{k}}
= (1 - q^{2} q^{2(r - 2)} + q^{2(r - 1)}) e_{\underline{k}}
= e_{\underline{k}}.
$$
Conversely consider the case $\underline{k} \cap (i - 1) = \emptyset$.
Then the position of $i - 1$ inside $\underline{k} \cup(i - 1)$ is necessarily
$r$. Then we get $\extm{i - 1} \intm{i - 1} e_{\underline{k}} = 0$ and $\intm{i - 1} \extm{i - 1} e_{\underline{k}} = (-q)^{r - 1}\intm{i - 1} e_{\underline{k} \cup (i - 1)} = q^{2(r - 1)}e_{\underline{k}}$.
Plugging these expressions in we find
$$
S_{i} e_{\underline{k}}
= (1 - q^{2(r - 1)} + q^{2(r - 1)}) e_{\underline{k}}
= e_{\underline{k}}.
$$

The second case $\underline{j}\cap i=\emptyset$ is similar. This becomes clear if we denote again by $r$ the position of $i$ inside
$\underline{j}\cup i$. With this notation we have again
$$
S_{i} e_{\underline{k}}
= (1 - q^{2} \extm{i - 1} \intm{i - 1} -\intm{i - 1} \extm{i - 1} + q^{2(r - 1)}) e_{\underline{k}}.
$$
If $\underline{k} \cap (i - 1) \neq \emptyset$ then the position of $i - 1$ inside $\underline{k}$ is $r - 1$, since it is the same as the position of $i - 1$ inside $\underline{k}\cup i$. On the other hand if $\underline{k}\cap (i - 1) = \emptyset$ then the position of $i - 1$ inside $\underline{k} \cup (i - 1)$ is $r$.
Therefore by the computations of the previous case we get that $S_{i} e_{\underline{k}}= e_{\underline{k}}$.
\end{proof}

\subsection{General relations}

We will now illustrate how to obtain the commutation relations for the general pairings and inner products introduced in \cref{not:dual-pair} and \cref{not:herm-prod}.
Following the general theory, the operator $\gamma_{i} = \gamma_{-}(f_{i})$ and its adjoint are defined by
$$
\langle w, \gamma_{i} x \rangle_{\lambda, k - 1} = \langle w \wedge f_{i}, x \rangle_{\lambda, k}, \quad
(\gamma_{i}^{*} z, x)_{\lambda^{\prime}, k + 1} = (z, \gamma_{i} x)_{\lambda^{\prime}, k},
$$
The next lemma tells us how these operators are related to $\intm{i}$ and $\extm{i}$.

\begin{lemma}
Acting on elements of degree $k$, we have the identities
$$
\gamma_{i} = \frac{\lambda_{k}}{\lambda_{k - 1}}\mathfrak{i}_{i}, \quad
\gamma_{i}^{*} = \frac{\bar{\lambda}_{k + 1}}{\bar{\lambda}_{k}} \frac{\lambda_{k}^{\prime}}{\lambda_{k + 1}^{\prime}} \mathfrak{e}_{i}.
$$
\end{lemma}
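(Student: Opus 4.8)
The plan is to deduce both identities directly from the defining relations for $\gamma_i$ and $\gamma_i^*$, by comparing the general pairing and inner product with their normalized counterparts. The first observation is that, by \cref{not:dual-pair} and \cref{not:herm-prod}, both general forms are simply overall rescalings of the normalized ones in each degree, namely $\langle \cdot, \cdot \rangle_{\lambda, k} = \lambda_k \langle \cdot, \cdot \rangle_k$ and $(\cdot, \cdot)_{\lambda', k} = \lambda'_k (\cdot, \cdot)_k$, since both sides are diagonal in the bases $\{f_{\underline{i}}\}$ and $\{e_{\underline{i}}\}$ with the prescribed values. This reduces everything to scalar bookkeeping together with the non-degeneracy of the normalized forms.

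For the first identity, I would take $x \in \Lambda_q^k(\mathfrak{u}_{+})$ and an arbitrary $w \in \Lambda_q^{k-1}(\mathfrak{u}_{-})$ and rewrite the defining relation $\langle w, \gamma_i x \rangle_{\lambda, k-1} = \langle w \wedge f_i, x \rangle_{\lambda, k}$ using the rescalings above. The left-hand side becomes $\lambda_{k-1} \langle w, \gamma_i x \rangle_{k-1}$, while the right-hand side becomes $\lambda_k \langle w \wedge f_i, x \rangle_k = \lambda_k \langle w, \intm{i} x \rangle_{k-1}$, the last step being precisely the defining relation of $\intm{i} = \gamma_{-}(f_i)$ for the normalized pairing. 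Since $w$ is arbitrary and $\langle \cdot, \cdot \rangle_{k-1}$ is non-degenerate, this forces $\gamma_i x = (\lambda_k / \lambda_{k-1}) \intm{i} x$ on degree $k$, which is the claim.

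For the second identity, I would start from the adjoint relation $(\gamma_i^* z, x)_{\lambda', k+1} = (z, \gamma_i x)_{\lambda', k}$ with $z \in \Lambda_q^k(\mathfrak{u}_{+})$ and $x \in \Lambda_q^{k+1}(\mathfrak{u}_{+})$, so that $\gamma_i$ now acts on degree $k+1$ and equals $(\lambda_{k+1}/\lambda_k) \intm{i}$ by the first part. Substituting this and passing to the normalized inner products, the right-hand side becomes $\lambda'_k \bigl(z, (\lambda_{k+1}/\lambda_k) \intm{i} x\bigr)_k$; the key point is that the scalar $\lambda_{k+1}/\lambda_k$ sits in the second slot, so antilinearity of the inner product in that argument (consistent with the convention $(av, w) = (v, a^* w)$) pulls it out as its complex conjugate $\bar\lambda_{k+1}/\bar\lambda_k$. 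Using $(z, \intm{i} x)_k = (\extm{i} z, x)_{k+1}$, which is \cref{lem:cliff2}, together with $(\gamma_i^* z, x)_{\lambda', k+1} = \lambda'_{k+1} (\gamma_i^* z, x)_{k+1}$ on the left, and invoking non-degeneracy of the normalized inner product, yields $\gamma_i^* = (\bar\lambda_{k+1}/\bar\lambda_k)(\lambda'_k / \lambda'_{k+1}) \extm{i}$ on degree $k$.

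The step I expect to require the most care is this combined bookkeeping of degrees and antilinearity: the factors $\lambda_k$ enter through the pairing and are carried into the antilinear slot of the inner product, hence appear conjugated, whereas the factors $\lambda'_k$ are overall rescalings of the entire inner product and appear unconjugated (consistent with their being positive reals). Once one is careful that $\gamma_i$ is being applied in degree $k+1$ rather than $k$ when computing $\gamma_i^*$ on degree $k$, both identities follow cleanly from the non-degeneracy of the normalized pairing and inner product.
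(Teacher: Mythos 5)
Your proof is correct and follows essentially the same route as the paper's: rewrite the defining relations of $\gamma_i$ and $\gamma_i^*$ in terms of the normalized pairing and inner product via the scalars $\lambda_k$, $\lambda_k'$, use the normalized relations $\langle w, \intm{i}x\rangle_{k-1} = \langle w\wedge f_i, x\rangle_k$ and $(z,\intm{i}x)_k = (\extm{i}z,x)_{k+1}$, and conclude by non-degeneracy, taking care that $\gamma_i$ acts in degree $k+1$ when computing $\gamma_i^*$ in degree $k$. The only discrepancy is bookkeeping of where the conjugate comes from: you attribute it to antilinearity of the inner product in the second slot, whereas the paper's stated convention is conjugate-linearity in the \emph{first} variable, so there the scalar $\lambda_{k+1}/\lambda_k$ exits the linear second slot unconjugated and is only conjugated at the final step when it is absorbed into the first slot before invoking non-degeneracy; since the $\lambda_k'$ are positive reals, either convention produces the identical formula, so nothing is affected.
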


\begin{proof}
Using $\langle \cdot, \cdot \rangle_{\lambda, k} = \lambda_{k} \langle \cdot, \cdot \rangle_{k}$
in the defining relation of $\gamma_{i}$ we obtain
\[
\lambda_{k - 1} \langle w, \gamma_{i} x\rangle_{k - 1} = \lambda_{k} \langle w \wedge f_{i}, x \rangle_{k}.
\]
Then using $\langle w, \mathfrak{i}_{i} x\rangle_{k - 1} = \langle w \wedge f_{i}, x\rangle_{k}$
we get
\[
\lambda_{k - 1} \langle w, \gamma_{i} x\rangle_{k - 1} = \lambda_{k} \langle w,\mathfrak{i}_{i} x \rangle_{k - 1}.
\]
From the non-degeneracy of the pairings we conclude that $\gamma_{i} = \frac{\lambda_{k}}{\lambda_{k - 1}} \mathfrak{i}_{i}$.

Similarly consider $\gamma_{i}^{*}$ and the relation $(\gamma_{i}^{*} z, x)_{\lambda^{\prime}, k + 1} = (z, \gamma_{i} x)_{\lambda^{\prime}, k}$.
In the right-hand side the operator $\gamma_{i}$ acts on an element of degree $k + 1$ by $\frac{\lambda_{k + 1}}{\lambda_{k}} \mathfrak{i}_{i}$.
Rescaling the inner products using $(\cdot, \cdot)_{\lambda^{\prime}, k} = \lambda^{\prime}_{k} (\cdot, \cdot)_{k}$ we obtain the identity
\[
\lambda_{k + 1}^{\prime}(\gamma_{i}^{*}x, z)_{k + 1} = \frac{\lambda_{k + 1}}{\lambda_{k}}\lambda_{k}^{\prime}(x, \mathfrak{i}_{i} z)_{k}.
\]
Now using the relation $(x, \mathfrak{i}_{i}z)_{k} = (\mathfrak{e}_{i} x, z)_{k + 1}$ we obtain
\[
(\gamma_{i}^{*} x, z)_{k + 1} = \frac{\lambda_{k + 1}}{\lambda_{k}} \frac{\lambda_{k}^{\prime}}{\lambda_{k + 1}^{\prime}} (\mathfrak{e}_{i} x, z)_{k + 1}.
\]
Recall that the Hermitian inner product $(\cdot, \cdot)_{k}$ is conjugate-linear in the first variable.
Then using non-degeneracy again we conclude that $\gamma_{i}^{*} = \frac{\bar{\lambda}_{k + 1}}{\bar{\lambda}_{k}} \frac{\lambda_{k}^{\prime}}{\lambda_{k + 1}^{\prime}} \mathfrak{e}_{i}$.
\end{proof}

The following identities can be immediately obtained from the previous lemma.

\begin{corollary}
\label{cor:cliff-rescaling}
Set $c_{k} = |\lambda_{k}|^{2} / \lambda_{k}^{\prime}$. Then acting on elements of degree $k$ we have
\begin{equation}
\gamma_{i}^{*} \gamma_{j} = \frac{c_{k}}{c_{k - 1}} \extm{i} \intm{j}, \quad
\gamma_{i} \gamma_{j}^{*} = \frac{c_{k + 1}}{c_{k}} \intm{i} \extm{j}.
\end{equation}
\end{corollary}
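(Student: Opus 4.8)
The statement follows by direct substitution of the two identities established in the preceding lemma, so the plan is essentially computational; the one point requiring care is to evaluate each operator at the degree on which it is actually applied, since $\gamma_{i}$ lowers the degree while $\gamma_{i}^{*}$ raises it.

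First I would treat $\gamma_{i}^{*} \gamma_{j}$ acting on $\Lambda_{q}^{k}(\mathfrak{u}_{+})$. Reading the product from right to left, the operator $\gamma_{j}$ acts on degree $k$ and hence equals $\frac{\lambda_{k}}{\lambda_{k-1}} \intm{j}$, producing an element of degree $k-1$. Consequently $\gamma_{i}^{*}$ acts on degree $k-1$, so by the lemma with $k$ replaced by $k-1$ it equals $\frac{\bar{\lambda}_{k}}{\bar{\lambda}_{k-1}} \frac{\lambda_{k-1}^{\prime}}{\lambda_{k}^{\prime}} \extm{i}$. Multiplying the two scalar coefficients and using $\lambda_{k} \bar{\lambda}_{k} = |\lambda_{k}|^{2}$, I expect the prefactor to collapse to $\frac{|\lambda_{k}|^{2}/\lambda_{k}^{\prime}}{|\lambda_{k-1}|^{2}/\lambda_{k-1}^{\prime}} = c_{k}/c_{k-1}$, which gives the first identity. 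The second identity $\gamma_{i} \gamma_{j}^{*}$ on degree $k$ is handled the same way: here $\gamma_{j}^{*}$ acts on degree $k$ as $\frac{\bar{\lambda}_{k+1}}{\bar{\lambda}_{k}} \frac{\lambda_{k}^{\prime}}{\lambda_{k+1}^{\prime}} \extm{j}$, raising the degree to $k+1$, after which $\gamma_{i}$ acts on degree $k+1$ as $\frac{\lambda_{k+1}}{\lambda_{k}} \intm{i}$; combining the scalars and using $\lambda_{k+1} \bar{\lambda}_{k+1} = |\lambda_{k+1}|^{2}$ should yield the factor $\frac{|\lambda_{k+1}|^{2}/\lambda_{k+1}^{\prime}}{|\lambda_{k}|^{2}/\lambda_{k}^{\prime}} = c_{k+1}/c_{k}$.

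There is no genuine obstacle here, as the entire content is the bookkeeping of degree shifts. The only thing to watch is that in each product the outer operator must be evaluated one degree away from $k$ (at $k-1$ in the first case and at $k+1$ in the second), and it is precisely this correct indexing that makes the barred and unbarred $\lambda$'s pair up into the moduli $|\lambda_{k}|^{2}$ while the inner-product normalizations $\lambda_{k}^{\prime}$ telescope, so that both coefficients assemble into the stated ratios of the $c_{k} = |\lambda_{k}|^{2}/\lambda_{k}^{\prime}$.
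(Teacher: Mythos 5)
Your computation is correct and is exactly the argument the paper intends: the corollary is stated as following immediately from the preceding lemma, and your careful tracking of the degree shifts (evaluating the outer operator at $k-1$, respectively $k+1$) is precisely the bookkeeping that makes the coefficients assemble into $c_k/c_{k-1}$ and $c_{k+1}/c_k$. No differences from the paper's approach.
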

These relations will be needed to compute the square of the Dolbeault–Dirac operator.

\section{Quantum root vectors}
\label{sec:root}

In this section we will show some properties satisfied by the quantum root vectors corresponding to the roots of $\mathfrak{u}_{+}$.
First we will obtain commutation relations among them. We will only consider these modulo terms in $U_{q}(\mathfrak{l})$, similarly to our treatment in the classical case.
Then we will obtain explicit expressions for the adjoint action of $U_{q}(\mathfrak{l})$ on them. This will be used to identify the quantum root vectors with the weight basis of $\mathfrak{u}_{+}$ previously introduced.

\subsection{Definitions}

For background material on quantum root vectors we refer the reader to \cite[Section 6.2]{klsc}.
As the name indicates, they provide a quantum counterpart to the usual notion of root vectors for semisimple Lie algebras. They are defined in terms of the Lusztig automorphisms $T_{i}$, which implement the action of the braid group $B_{\mathfrak{g}}$ on $U_{q}(\mathfrak{g})$.
We will only give the formulae which will be relevant for our computations. These are
\begin{gather*}
T_i (E_i) = - F_i K_i, \quad
T_{i}(E_{j}) = E_{j}, \quad a_{ij} = 0,\\
T_{i}(E_{j}) = - E_{i} E_{j} + q^{-1} E_{j} E_{i}, \quad a_{ij} = - 1.
\end{gather*}
Here $a_{ij}$ denote the entries of the Cartan matrix.

Let $w_{0}$ be the longest word of the Weyl group of $\mathfrak{g}$. Let $w_{0} = s_{j_{1}} \cdots s_{j_{k}}$ be a fixed reduced decomposition. Then all the positive roots can be obtained as
$$
\beta_{i} = s_{j_{1}} \cdots s_{j_{i - 1}} (\alpha_{j_i}), \quad i =  1, \cdots, k.
$$
The quantum root vectors are defined similarly as
$$
E_{\beta_{i}} = T_{j_{1}} \cdots T_{j_{i - 1}} (E_{j_i}), \quad i =  1, \cdots, k.
$$
We warn the reader that the quantum root vectors depend on the choice of the reduced decomposition for $w_{0}$ (in the classical case the only ambiguities are signs).

As explained before, in our case we have $\mathfrak{g} = \mathfrak{sl}_{N + 1}$. Excluding the simple root $\alpha_{N}$ we get the Levi factor $\mathfrak{l} = \mathfrak{gl}_{N}$, having semisimple part $\mathfrak{k} = \mathfrak{sl}_{N}$.
For the longest word of $\mathfrak{g} = \mathfrak{sl}_{N + 1}$ we choose the reduced expression
$$
w_{0} = s_{1} (s_{2} s_{1}) \cdots (s_{N-1} \cdots s_{1}) (s_{N} \cdots s_{1}).
$$
Then the longest word of (the semisimple part of) $\mathfrak{l}$ is given by
\[
w_{0,\mathfrak{l}} = s_{1} (s_{2} s_{1}) \cdots (s_{N - 1} \cdots s_{1}).
\]
With these choices we have the factorization $w_{0} = w_{0, \mathfrak{l}}w_{\mathfrak{l}}$, where $w_{\mathfrak{l}} = s_{N} \cdots s_{1}$, which is in accordance with \cite[Convention 3.13]{qflag2}.

We denote the radical roots $\Delta(\mathfrak{u}_{+})$ by $\{\xi_{i}\}_i$. Write $w_{0,\mathfrak{l}} = s_{j_{1}} \cdots s_{j_{m}}$ and $w_{\mathfrak{l}} = s_{j_{m + 1}} \cdots s_{j_{m + n}}$.
Then the radical roots can be obtained \cite[Lemma 3.15]{qflag2} by
$$
\xi_{i} = s_{j_{1}} \cdots s_{j_{m}} s_{j_{m + 1}} \cdots s_{j_{m + i - 1}} (\alpha_{j_{m + i}}), \quad
i = 1, \cdots, n.
$$

The computation of the radical roots is then a completely classical problem. To obtain simple expression for the quantum root vectors, on the other hand, we need to make use of the braid relations satisfied by the automorphisms $T_{i}$.
We do not give the details concerning this computation but simply report the results, which are
$$
\xi_{i} = \sum_{j = i}^{N} \alpha_{j}, \quad E_{\xi_{i}} = T_{i} \cdots T_{N - 1} (E_{N}), \quad
i = 1, \cdots, N.
$$
For $i = N$ we mean that $E_{\xi_{N}} = E_{N}$.
In the following we will use many times the identity proven below. It is a replacement for the usual identity involving iterated commutators.

\begin{lemma}
\label{lem:recursion-roots}
For $i = 1, \cdots, N - 1$ we have
$$E_{\xi_{i}} = - E_{i} E_{\xi_{i + 1}} + q^{-1}E_{\xi_{i + 1}} E_{i}.$$
\end{lemma}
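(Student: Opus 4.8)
The plan is to exploit the recursive definition of the quantum root vectors and reduce everything to the single Lusztig formula $T_i(E_{i+1}) = -E_i E_{i+1} + q^{-1} E_{i+1} E_i$, which is valid since $a_{i,i+1} = -1$. The starting observation is that, directly from $E_{\xi_i} = T_i \cdots T_{N-1}(E_N)$, we have $E_{\xi_i} = T_i(E_{\xi_{i+1}})$. So the lemma is precisely the assertion that $T_i$ sends $E_{\xi_{i+1}}$ to the $q$-commutator $-E_i E_{\xi_{i+1}} + q^{-1} E_{\xi_{i+1}} E_i$; in other words, that $E_{\xi_{i+1}}$ behaves under $T_i$ exactly as the generator $E_{i+1}$ does.

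I would prove this by downward induction on $i$, from $i = N-1$ down to $i = 1$. For the base case $i = N-1$ one has $E_{\xi_N} = E_N$, so $E_{\xi_{N-1}} = T_{N-1}(E_N) = -E_{N-1} E_N + q^{-1} E_N E_{N-1}$ by the quoted formula, which is exactly the claim. For the inductive step, assume the statement at index $i+1$, that is $E_{\xi_{i+1}} = -E_{i+1} E_{\xi_{i+2}} + q^{-1} E_{\xi_{i+2}} E_{i+1}$, and apply the algebra automorphism $T_i$ to both sides. The two inputs needed are the Lusztig formula for $T_i(E_{i+1})$ above, together with $T_i(E_{\xi_{i+2}}) = E_{\xi_{i+2}}$; the latter holds because $E_{\xi_{i+2}}$ lies in the subalgebra generated by $E_{i+2}, \dots, E_N$, each of which satisfies $a_{ij} = 0$ and is therefore fixed by $T_i$, and $T_i$ is multiplicative.

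Substituting these into $T_i(E_{\xi_{i+1}})$ and expanding yields four monomials in $E_i$, $E_{i+1}$ and $E_{\xi_{i+2}}$; comparing with the expansion of the desired right-hand side $-E_i E_{\xi_{i+1}} + q^{-1} E_{\xi_{i+1}} E_i$ (again using the inductive hypothesis for $E_{\xi_{i+1}}$) one checks that the two extreme terms agree outright, while the two cross terms match after invoking $[E_i, E_{\xi_{i+2}}] = 0$. This commutation holds for the same reason as above, since $E_i$ commutes with every $E_j$ for $j \geq i+2$ by the Serre relation for $a_{ij} = 0$. The hard part is really just this bookkeeping: one must move $E_i$ past $E_{\xi_{i+2}}$ in precisely the right places so that $E_{i+1} E_i E_{\xi_{i+2}} + E_{\xi_{i+2}} E_i E_{i+1}$ is rewritten as $E_{i+1} E_{\xi_{i+2}} E_i + E_i E_{\xi_{i+2}} E_{i+1}$, which is exactly what the other side produces. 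Everything else is formal manipulation, so I expect no genuine obstacle beyond keeping the $q$-powers and the orderings straight.
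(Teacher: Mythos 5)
Your proposal is correct and follows essentially the same route as the paper's proof: the same downward induction with base case $i = N-1$, the same use of $T_i(E_{\xi_{i+2}}) = E_{\xi_{i+2}}$ and $[E_i, E_{\xi_{i+2}}] = 0$ (both from $a_{ij}=0$ for $j \geq i+2$), and the same regrouping of the four resulting monomials. No discrepancies.
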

\begin{proof}
The result is true for $i = N - 1$ since
\[
E_{\xi_{N-1}}=T_{N-1}(E_{N})=-E_{N-1}E_{N}+q^{-1}E_{N}E_{N-1}.
\]
Now suppose it holds for $i + 1$. Then we have
\[
E_{\xi_{i}}
= T_{i} T_{i + 1} \cdots T_{N - 1}(E_{N})
= T_{i}(E_{\xi_{i + 1}})
= T_{i}(-E_{i + 1} E_{\xi_{i + 2}} + q^{-1}E_{\xi_{i + 2}} E_{i + 1}).
\]
Since $E_{\xi_{i + 2}}$ contains only $E_{i + 2}, \cdots, E_{N}$ we have
that $T_{i}(E_{\xi_{i + 2}}) = E_{\xi_{i + 2}}$. Then
\[
\begin{split}
E_{\xi_{i}} & = -T_{i}(E_{i + 1}) E_{\xi_{i + 2}} + q^{-1} E_{\xi_{i + 2}} T_{i}(E_{i + 1})\\
 & = -(-E_{i} E_{i + 1} + q^{-1} E_{i + 1}E_{i}) E_{\xi_{i + 2}} + q^{-1}E_{\xi_{i + 2}} (-E_{i} E_{i + 1} + q^{-1} E_{i + 1} E_{i}).
\end{split}
\]
By the previous argument $E_{i}$ commutes with $E_{\xi_{i + 2}}$.
Therefore
\[
\begin{split}
E_{\xi_{i}} & = -E_{i} (-E_{i + 1} E_{\xi_{i + 2}} + q^{-1} E_{\xi_{i + 2}} E_{i + 1}) + q^{-1}(-E_{i + 1} E_{\xi_{i + 2}} + q^{-1}E_{\xi_{i + 2}} E_{i + 1}) E_{i}\\
 & = -E_{i} E_{\xi_{i + 1}} + q^{-1} E_{\xi_{i + 1}} E_{k}. \qedhere
\end{split}
\]
\end{proof}

It is also clear from this relation that the quantum root vector $E_{\xi_{i}}$ has weight $\xi_{i}$.

\subsection{Commutation relations}

We will derive here commutation relations between the quantum root vectors $\{E_{\xi_{i}}\}_{i = 1}^N$ and their $*$-counterparts. We will only be interested in commutation relations modulo terms in $U_{q}(\mathfrak{l})$, for which we adopt the following notation.

\begin{notation}
For $X, Y \in U_{q}(\mathfrak{g})$ we write $X \sim Y$ if we have $X = Y + Z$ for some $Z \in U_{q}(\mathfrak{l})$.
\end{notation}

We start by proving two lemmata.

\begin{lemma}
\label{lem:first-lem-comm}
Let $i < N$. Then we have
\[
E_{\xi_{i}}^{*}E_{i}-qE_{i}E_{\xi_{i}}^{*}=-q^{-1}E_{\xi_{i+1}}^{*}.
\]
\end{lemma}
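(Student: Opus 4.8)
The plan is to compute $E_{\xi_i}^*$ explicitly from the recursion in \cref{lem:recursion-roots} and then reduce the desired relation to a manageable computation involving only the generators $E_i$, $F_i$, $K_i$. First I would apply the $*$-structure of the compact real form to the recursion $E_{\xi_i} = -E_i E_{\xi_{i+1}} + q^{-1} E_{\xi_{i+1}} E_i$. Since $*$ is a conjugate-linear anti-homomorphism and $E_i^* = K_i F_i$, this gives an expression for $E_{\xi_i}^*$ in terms of $E_{\xi_{i+1}}^*$ and the factor $K_i F_i$. The natural strategy is then induction on $i$ (downward from $i = N$), mirroring the structure of \cref{lem:recursion-roots}, where the base case $E_{\xi_N} = E_N$ gives $E_{\xi_N}^* = K_N F_N$.

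The key computational step is to understand how $E_{\xi_i}^*$ commutes with $E_i$ modulo the prefactor $q$. I would substitute the starred recursion into the left-hand side $E_{\xi_i}^* E_i - q E_i E_{\xi_i}^*$ and expand. The crucial input is the commutation relation $E_i F_i - F_i E_i = (K_i - K_i^{-1})/(q - q^{-1})$, together with the $K$-commutation rules $K_i E_j K_i^{-1} = q^{a_{ij}} E_j$; these are what produce the shift from $\xi_i$ to $\xi_{i+1}$ on the right-hand side. I expect that most cross-terms will either cancel against each other or land in $U_q(\mathfrak{l})$ — but here one must be careful, since the statement is an \emph{exact} equality rather than one holding only modulo $U_q(\mathfrak{l})$. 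So the terms involving $F_i$ and $K_i$ (which do lie in $U_q(\mathfrak{l})$) must actually cancel on the nose, not merely be discarded. Tracking the powers of $q$ and $K_i$ carefully through the Serre-type quadratic expressions is what makes the bookkeeping delicate.

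The main obstacle will be handling the interaction between $K_i$ (coming from $E_i^* = K_i F_i$) and the various $E_j$ appearing inside $E_{\xi_{i+1}}^*$, since $K_i$ does not commute with them: each pass of $K_i$ past a generator $E_j$ with $a_{ij} \neq 0$ introduces a power of $q$, and these powers must conspire to give exactly the coefficients $-q^{-1}$ and the clean form on the right. I would organize the computation so that all $K_i$ factors are moved to a uniform position first, collecting the resulting $q$-powers, and then apply the single relevant instance of the $E$–$F$ commutator. An alternative, possibly cleaner route is to prove the relation directly by a short induction that invokes the already-established \cref{lem:recursion-roots} for $E_{\xi_{i+1}}$ in tandem, so that the inductive hypothesis on $E_{\xi_{i+1}}^* E_{i+1} - q E_{i+1} E_{\xi_{i+1}}^*$ feeds into the step for $i$; I would check both approaches and present whichever keeps the $q$-power bookkeeping most transparent.
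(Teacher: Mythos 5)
Your plan is correct and is essentially the paper's argument: both derive the identity in one step from the recursion of \cref{lem:recursion-roots} via a single application of $E_iF_i - F_iE_i = (K_i - K_i^{-1})/(q-q^{-1})$ together with the weight relations $K_iE_i = q^2E_iK_i$ and $E_{\xi_{i+1}}K_i = qK_iE_{\xi_{i+1}}$; the paper merely organizes the bookkeeping by first computing the exact commutator $[F_i, E_{\xi_i}] = -q^{-1}K_i^{-1}E_{\xi_{i+1}}$ and applying $*$ at the end, whereas you apply $*$ to the recursion first and then expand, which works equally well (the $U_q(\mathfrak{l})$-terms do cancel exactly, as you anticipated). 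One small remark: no induction is actually needed, since the identity for each $i$ follows directly from the recursion without invoking the case $i+1$, so the downward induction you propose is vacuous.
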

\begin{proof}
We write $E_{\xi_{i}}^{*} E_{i} = E_{\xi_{i}}^{*} F_{i}^{*} K_{i} = (F_{i} E_{\xi_{i}})^{*} K_{i}$.
Then using \cref{lem:recursion-roots} we compute
\[
\begin{split}
[F_{i}, E_{\xi_{i}}]
& = [F_{i}, -E_{i} E_{\xi_{i + 1}} + q^{-1} E_{\xi_{i+1}} E_{i}]\\
& = \frac{K_{i} - K_{i}^{-1}}{q - q^{-1}}E_{\xi_{i + 1}} - q^{-1} E_{\xi_{i + 1}}\frac{K_{i} - K_{i}^{-1}}{q - q^{-1}}.
\end{split}
\]
Using the relations for the generators we easily get $E_{\xi_{i + 1}} K_{i} = q K_{i} E_{\xi_{i + 1}}$. Hence
\[
[F_{i}, E_{\xi_{i}}]
= \left(\frac{K_{i} - K_{i}^{-1}}{q - q^{-1}} - q^{-1} \frac{q K_{i} - q^{-1} K_{i}^{-1}}{q - q^{-1}}\right) E_{\xi_{i + 1}}
= -q^{-1} K_{i}^{-1} E_{\xi_{i + 1}}.
\]
Plugging this into $E_{\xi_{i}}^{*} E_{i} = (F_{i} E_{\xi_{i}})^{*} K_{i}$ we get
\[
\begin{split}
E_{\xi_{i}}^{*} E_{i}
& =(E_{\xi_{i}} F_{i})^{*} K_{i} + (-q^{-1} K_{i}^{-1} E_{\xi_{i + 1}})^{*} K_{i}\\
& =(K_{i} E_{\xi_{i}} F_{i})^{*} - q^{-1}E_{\xi_{i + 1}}^{*}.
\end{split}
\]
Then using again $K_{i} E_{\xi_{i + 1}} = q^{-1}K_{i} E_{\xi_{i + 1}}$ and $K_{i} E_{i} = q^{2}E_{i} K_{i}$ we get
\[
E_{\xi_{i}}^{*}E_{i}
= q(E_{\xi_{i}} E_{i}^{*})^{*} - q^{-1} E_{\xi_{i + 1}}^{*}
= q E_{i} E_{\xi_{i}}^{*} - q^{-1} E_{\xi_{i + 1}}^{*}.
\qedhere
\]
\end{proof}

\begin{lemma}
\label{lem:comm-i-ip1}
Let $i < N$. Then we have $E_{\xi_{i}}^{*} E_{\xi_{i + 1}} \sim q E_{\xi_{i + 1}} E_{\xi_{i}}^{*}$.
\end{lemma}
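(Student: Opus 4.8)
The plan is to rewrite the $q$-commutator $E_{\xi_i}^{*} E_{\xi_{i+1}} - q E_{\xi_{i+1}} E_{\xi_i}^{*}$ as the commutator of \emph{two} elements of $U_q(\mathfrak{l})$, which then lies in $U_q(\mathfrak{l})$ for free. The two inputs I would use are the adjoint of the recursion in \cref{lem:recursion-roots}, namely
\[
E_{\xi_i}^{*} = - E_{\xi_{i+1}}^{*} E_i^{*} + q^{-1} E_i^{*} E_{\xi_{i+1}}^{*},
\]
valid since $q \in \mathbb{R}$ and $*$ is an antilinear anti-automorphism, together with the $q$-commutation relation $E_i^{*} E_{\xi_{i+1}} = q^{-1} E_{\xi_{i+1}} E_i^{*}$. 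The latter holds because $E_i^{*} = K_i F_i$, because $F_i$ commutes with $E_{\xi_{i+1}}$ (the root vector $E_{\xi_{i+1}}$ involves only $E_{i+1}, \dots, E_N$), and because $E_{\xi_{i+1}} K_i = q K_i E_{\xi_{i+1}}$, a relation already established inside the proof of \cref{lem:first-lem-comm}. Crucially, $E_i^{*} \in U_q(\mathfrak{l})$ precisely because $i < N$, so $i \in S$.

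First I would substitute the expression for $E_{\xi_i}^{*}$ into both $E_{\xi_i}^{*} E_{\xi_{i+1}}$ and $E_{\xi_{i+1}} E_{\xi_i}^{*}$, abbreviating $C = E_{\xi_{i+1}}^{*}$, $B = E_{\xi_{i+1}}$ and $L = E_i^{*}$. Moving every occurrence of $L$ to the outside with $L B = q^{-1} B L$, a short computation collapses the four resulting terms and yields
\[
E_{\xi_i}^{*} E_{\xi_{i+1}} - q E_{\xi_{i+1}} E_{\xi_i}^{*} = \bigl[\, E_i^{*},\ q^{-1} E_{\xi_{i+1}}^{*} E_{\xi_{i+1}} - q E_{\xi_{i+1}} E_{\xi_{i+1}}^{*} \,\bigr].
\]
The whole point of this rearrangement is that the first slot of the commutator is the Levi element $E_i^{*}$.

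It then remains to show that the second slot, $\Phi_{i+1} := q^{-1} E_{\xi_{i+1}}^{*} E_{\xi_{i+1}} - q E_{\xi_{i+1}} E_{\xi_{i+1}}^{*}$, lies in $U_q(\mathfrak{l})$; granting this, the commutator of two elements of the subalgebra $U_q(\mathfrak{l})$ is again in $U_q(\mathfrak{l})$ and the lemma follows. This is exactly the quantum counterpart of the classical inclusion $[\mathfrak{u}_{-}, \mathfrak{u}_{+}] \subset \mathfrak{l}$ applied to the weight-zero combination of $E_{\xi_{i+1}}$ and $E_{\xi_{i+1}}^{*}$. I would prove $\Phi_j \in U_q(\mathfrak{l})$ for all $j$ by downward induction on $j$: the base case $j = N$ is immediate, since $E_{\xi_N}^{*} = K_N F_N$ and a direct application of $E_N F_N - F_N E_N = (K_N - K_N^{-1})/(q - q^{-1})$ makes the two copies of $K_N E_N F_N$ cancel, leaving $\Phi_N = -q^{-1}(K_N^{2} - 1)/(q - q^{-1})$, an element of the Cartan part of $U_q(\mathfrak{l})$. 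More conceptually, one can identify $E_{\xi_j}^{*}$ with $K_{\xi_j} F_{\xi_j}$ up to a scalar and invoke the quantum $\mathfrak{sl}_2$ relation $E_{\xi_j} F_{\xi_j} - F_{\xi_j} E_{\xi_j} = (K_{\xi_j} - K_{\xi_j}^{-1})/(q - q^{-1})$ for the root $\xi_j$, using $(\xi_j, \xi_j) = 2$; this collapses $\Phi_j$ to the same Cartan element.

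I expect the main obstacle to be precisely this last step, the inclusion $\Phi_{i+1} \in U_q(\mathfrak{l})$. The reduction to $[E_i^{*}, \Phi_{i+1}]$ is purely formal and uses only the adjoint recursion and one $q$-commutation, but controlling $\Phi_{i+1}$ genuinely requires understanding how the radical root vectors interact with their adjoints. The clean cancellation at $j = N$ suggests the $\mathfrak{sl}_2$-route is the efficient one; however, making the identification $E_{\xi_j}^{*} = K_{\xi_j} F_{\xi_j}$ precise, and verifying the root $\mathfrak{sl}_2$ relation for roots that are not simple, is where the real work concentrates and is the step I would treat most carefully.
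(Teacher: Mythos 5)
Your formal reduction is correct and is essentially a repackaging of the paper's own manipulation: substituting the adjoint recursion $E_{\xi_i}^{*} = -E_{\xi_{i+1}}^{*}E_i^{*} + q^{-1}E_i^{*}E_{\xi_{i+1}}^{*}$ and using $E_i^{*}E_{\xi_{i+1}} = q^{-1}E_{\xi_{i+1}}E_i^{*}$ does yield
\[
E_{\xi_i}^{*}E_{\xi_{i+1}} - q\,E_{\xi_{i+1}}E_{\xi_i}^{*} = \bigl[\,E_i^{*},\ \Phi_{i+1}\,\bigr],
\qquad
\Phi_{i+1} := q^{-1}E_{\xi_{i+1}}^{*}E_{\xi_{i+1}} - q\,E_{\xi_{i+1}}E_{\xi_{i+1}}^{*}.
\]
The gap is the claim that $\Phi_{i+1} \in U_q(\mathfrak{l})$. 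This holds for $i+1 = N$ (your computation of $\Phi_N$ as a Cartan element is right, which is why your base case $i = N-1$ goes through), but it is \emph{false} for $i+1 < N$: the classical inclusion $[\mathfrak{u}_-,\mathfrak{u}_+]\subset\mathfrak{l}$ does not survive quantization in this naive form. Indeed the paper's relation \eqref{eq:comm-ii} gives $\Phi_{i+1} \sim q^{-2}[E_{\xi_{i+2}}^{*},E_{\xi_{i+2}}]$, and already $[E_{\xi_N}^{*},E_{\xi_N}] = K_N(F_NE_N - q^{-2}E_NF_N)$ contains the term $(1-q^{-2})K_NE_NF_N \notin U_q(\mathfrak{l})$; more generally the proposition following \cref{lem:comm-i-ip1} shows $\Phi_j \sim (q-q^{-1})\sum_{k>j}q^{3(j-k)}E_{\xi_k}^{*}E_{\xi_k}$, which lies outside the Levi. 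Your proposed shortcut via $E_{\xi_j}^{*} = c\,K_{\xi_j}F_{\xi_j}$ and the root $\mathfrak{sl}_2$ relation also fails: for $\xi_{N-1}=\alpha_{N-1}+\alpha_N$ one computes $E_{\xi_{N-1}}^{*} = q^{-1}K_{\xi_{N-1}}(-F_NF_{N-1}+q^{-1}F_{N-1}F_N)$, which is not proportional to $K_{\xi_{N-1}}T_{N-1}(F_N) = K_{\xi_{N-1}}(-F_NF_{N-1}+qF_{N-1}F_N)$, so the $\mathfrak{sl}_2$ collapse is not available.

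The repair is exactly what the paper does, and it shows why the statement needs a genuine induction on itself. One first proves \cref{lem:first-lem-comm}, then combines it with the induction hypothesis for the pair $(i+1,i+2)$ to establish the weaker but correct control $\Phi_{i+1} \equiv q^{-2}[E_{\xi_{i+2}}^{*},E_{\xi_{i+2}}] \bmod U_q(\mathfrak{l})$, i.e.\ relation \eqref{eq:comm-ii}. The saving grace in your commutator formula is then not that $\Phi_{i+1}$ lies in the Levi, but that $E_i^{*}=K_iF_i$ commutes with both $E_{\xi_{i+2}}$ and $E_{\xi_{i+2}}^{*}$ (since $(\alpha_i,\xi_{i+2})=0$ and $F_i$ commutes with $E_{i+2},\dots,E_N$), so the non-Levi part of $\Phi_{i+1}$ is killed inside $[E_i^{*},\,\cdot\,]$ and only $[E_i^{*},W]$ with $W\in U_q(\mathfrak{l})$ survives. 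So your decomposition is a clean way to organize the proof, but as written the key inclusion it rests on is wrong, and filling it requires both the auxiliary \cref{lem:first-lem-comm} and strengthening the inductive statement.
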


\begin{proof}
Consider first the case $i = N - 1$. We have
\[
E_{\xi_{N - 1}}^{*} E_{\xi_{N}}
= (-E_{N}^{*} E_{N - 1}^{*} + q^{-1} E_{N - 1}^{*} E_{N}^{*}) E_{N}.
\]
First of all notice that we have
\[
E_{N - 1}^{*} E_{N} = K_{N - 1} F_{N - 1} E_{N}= q^{-1} E_{N} K_{N - 1} F_{N - 1}
= q^{-1} E_{N} E_{N - 1}^{*}.
\]
Secondly we need the identity
\[
\begin{split}
E_{N}^{*} E_{N}
& = K_{N} F_{N} E_{N}
= K_{N} E_{N} F_{N} - K_{N} \frac{K_{N} - K_{N}^{-1}}{q - q^{-1}}\\
& \sim q^{2} E_{N} K_{N} F_{N}
= q^{2} E_{N} E_{N}^{*}.
\end{split}
\]
Using these relations we immediately obtain
\[
E_{\xi_{N - 1}}^{*} E_{\xi_{N}}
\sim q E_{N} (-E_{N}^{*} E_{N - 1}^{*} + q^{-1} E_{N - 1}^{*} E_{N}^{*})
= q E_{\xi_{N}} E_{\xi_{N - 1}}^{*}.
\]

Now we proceed by induction. We use the identity $E_{\xi_{i + 1}}^{*} E_{i + 1} = q E_{i + 1}E_{\xi_{i + 1}}^{*} - q^{-1} E_{\xi_{i + 2}}^{*}$ from \cref{lem:first-lem-comm} and the induction hypothesis $E_{\xi_{i + 1}}^{*} E_{\xi_{i + 2}} \sim q E_{\xi_{i + 2}} E_{\xi_{i + 1}}^{*}$. We obtain 
\[
\begin{split}
E_{\xi_{i + 1}}^{*} E_{\xi_{i + 1}}
& = E_{\xi_{i + 1}}^{*} (-E_{i + 1} E_{\xi_{i + 2}} + q^{-1} E_{\xi_{i + 2}} E_{i + 1})\\
& \sim q^{2} (-E_{i + 1} E_{\xi_{i + 2}} + q^{-1} E_{\xi_{i + 2}} E_{i + 1}) E_{\xi_{i + 1}}^{*}\\
& -q^{-1} (-E_{\xi_{i + 2}}^{*} E_{\xi_{i + 2}} + E_{\xi_{i + 2}} E_{\xi_{i + 2}}^{*}).
\end{split}
\]
This can be recast in the form
\begin{equation}
\label{eq:comm-ii}
E_{\xi_{i + 1}}^{*} E_{\xi_{i + 1}} \sim q^{2} E_{\xi_{i + 1}} E_{\xi_{i + 1}}^{*} + q^{-1}[E_{\xi_{i + 2}}^{*}, E_{\xi_{i + 2}}].
\end{equation}
We will use this result to rewrite the expression
\[
E_{\xi_{i}}^{*} E_{\xi_{i + 1}}
= (-E_{\xi_{i + 1}}^{*} E_{i}^{*} + q^{-1} E_{i}^{*} E_{\xi_{i + 1}}^{*}) E_{\xi_{i + 1}}.
\]
The relation $E_{i}^{*} E_{\xi_{i + 1}} = q^{-1} E_{\xi_{i + 1}} E_{i}^{*}$ is straightforward to check, using the relations for the generators. Using it together with equation \eqref{eq:comm-ii} we obtain
\[
\begin{split}
E_{\xi_{i}}^{*} E_{\xi_{i + 1}}
& \sim q E_{\xi_{i + 1}} (- E_{\xi_{i + 1}}^{*} E_{i}^{*} + q^{-1} E_{i}^{*} E_{\xi_{i + 1}}^{*})\\
& - q^{-2} [E_{\xi_{i + 2}}^{*}, E_{\xi_{i + 2}}] E_{i}^{*} + q^{-2} E_{i}^{*}[E_{\xi_{i + 2}}^{*}, E_{\xi_{i + 2}}].
\end{split}
\]
Finally observe that $E_{i}^{*} = K_{i} F_{i}$ commutes with $E_{\xi_{i + 2}}$ and $E_{\xi_{i + 2}}^{*}$. Therefore the last two terms cancel out and we conclude that
\[
E_{\xi_{i}}^{*} E_{\xi_{i + 1}}
\sim q E_{\xi_{i + 1}} (- E_{\xi_{i + 1}}^{*} E_{i}^{*} + q^{-1} E_{i}^{*} E_{\xi_{i + 1}}^{*})
= q E_{\xi_{i + 1}} E_{\xi_{i}}^{*}. \qedhere
\]
\end{proof}

At this point we can easily obtain the commutation relations for $i \neq j$.

\begin{proposition}
Let $i \neq j$. Then we have $E_{\xi_{i}}^{*} E_{\xi_{j}} \sim q E_{\xi_{j}} E_{\xi_{i}}^{*}$.\end{proposition}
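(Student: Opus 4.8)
The plan is to first reduce to the case $i < j$ and then to argue by induction on the gap $d = j - i$. For the reduction I would use that $\sim$ is stable under the $*$-involution: the subalgebra $\Uql$ is generated by $K_i^{\pm 1}$ and the $E_j, F_j$ with $j \in S$, and since $*$ sends these to $K_i$, $K_j F_j$, $E_j K_j^{-1}$, all of which again lie in $\Uql$, the algebra $\Uql$ is $*$-closed. Hence $X \sim Y$ implies $X^* \sim Y^*$, and because $q$ is real the identity for a pair $(i,j)$ with $i > j$ is obtained by applying $*$ to the identity for $(j,i)$, which has $j < i$. It therefore suffices to treat $i < j$.

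For $i < j$ I would induct on $d = j - i$, the base case $d = 1$ being exactly \cref{lem:comm-i-ip1}. For the inductive step with $d \geq 2$ (so $j \geq i + 2$), I would substitute the $*$-adjoint of the recursion in \cref{lem:recursion-roots}, namely $E_{\xi_i}^* = -E_{\xi_{i+1}}^* E_i^* + q^{-1} E_i^* E_{\xi_{i+1}}^*$, into $E_{\xi_i}^* E_{\xi_j}$. The two ingredients I need are: first, that $E_i^*$ commutes with $E_{\xi_j}$ when $j \geq i + 2$; and second, the induction hypothesis $E_{\xi_{i+1}}^* E_{\xi_j} \sim q E_{\xi_j} E_{\xi_{i+1}}^*$, whose gap is $d - 1 \geq 1$.

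For the first ingredient I would note that $E_{\xi_j}$ is a polynomial in $E_j, \dots, E_N$ only (as in the proof of \cref{lem:recursion-roots}), so $F_i$ commutes with it whenever $i \leq j - 2$, while $K_i E_{\xi_j} K_i^{-1} = q^{(\alpha_i, \xi_j)} E_{\xi_j}$ with $(\alpha_i, \xi_j) = \sum_{k = j}^N (\alpha_i, \alpha_k) = 0$ since $k - i \geq 2$ throughout; hence $E_i^* = K_i F_i$ commutes with $E_{\xi_j}$. Writing the induction hypothesis as $E_{\xi_{i+1}}^* E_{\xi_j} = q E_{\xi_j} E_{\xi_{i+1}}^* + Z$ with $Z \in \Uql$ and pushing $E_i^*$ through $E_{\xi_j}$, the expression should collapse to $q E_{\xi_j}(-E_{\xi_{i+1}}^* E_i^* + q^{-1} E_i^* E_{\xi_{i+1}}^*) - Z E_i^* + q^{-1} E_i^* Z = q E_{\xi_j} E_{\xi_i}^* - Z E_i^* + q^{-1} E_i^* Z$, where I have used the adjoint recursion a second time to recognize the first summand.

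The main obstacle, exactly as in \cref{lem:comm-i-ip1}, is that $\sim$ is not an ideal relation, so one may not freely multiply $\sim$-identities by arbitrary elements, and the residual terms $-Z E_i^* + q^{-1} E_i^* Z$ must be controlled by hand. Here the decisive point is that $i \leq N - 2 < N$ forces $i \in S$, so that $E_i^* = K_i F_i \in \Uql$; since $\Uql$ is a subalgebra and $Z \in \Uql$, both $Z E_i^*$ and $E_i^* Z$ lie in $\Uql$, and the residual terms therefore vanish modulo $\Uql$. This gives $E_{\xi_i}^* E_{\xi_j} \sim q E_{\xi_j} E_{\xi_i}^*$ and closes the induction.
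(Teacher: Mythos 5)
Your proof is correct and follows essentially the same route as the paper: reduce to $i<j$ by applying $*$ (using that $\Uql$ is $*$-closed and $q$ is real), then induct using the adjoint of the recursion in \cref{lem:recursion-roots} together with the fact that $E_i^* = K_iF_i$ commutes with $E_{\xi_j}$ when $i < j-1$. The only difference is that you make explicit why the residual $\Uql$-terms can be safely multiplied by $E_i^*$ (namely $E_i^*\in\Uql$ since $i<N$), a point the paper leaves implicit.
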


\begin{proof}
First we consider the case $i < j$. For $i = j - 1$ the statement follows from \cref{lem:comm-i-ip1}. We proceed by induction, that is we assume that $E_{\xi_{i + 1}}^{*} E_{\xi_{j}} \sim q E_{\xi_{j}} E_{\xi_{i + 1}}^{*}$ for $ i < j - 1$. Using the usual recursion relation for $E_{\xi_{i}}$ we get 
\[
E_{\xi_{i}}^{*} E_{\xi_{j}}
= (-E_{\xi_{i + 1}}^{*} E_{i}^{*} + q^{-1} E_{i}^{*} E_{\xi_{i + 1}}^{*}) E_{\xi_{j}}.
\]
We have that $[E_{i}^{*}, E_{\xi_{j}}] = 0$, since $E_{\xi_{j}}$ contains the generators $E_{j}, \cdots, E_{N}$ and $i < j - 1$. Using this fact together with $E_{\xi_{i + 1}}^{*} E_{\xi_{j}} \sim q E_{\xi_{j}} E_{\xi_{i + 1}}^{*}$ we obtain
\[
E_{\xi_{i}}^{*} E_{\xi_{j}}
\sim q E_{\xi_{j}} ( - E_{\xi_{i + 1}}^{*} E_{i}^{*} + q^{-1} E_{i}^{*} E_{\xi_{i + 1}}^{*}) = q E_{\xi_{j}} E_{\xi_{i}}^{*}.
\]

Now we consider the case $i > j$. First observe that the operation $*$ leaves $U_{q}(\mathfrak{l})$ invariant, since it is a Hopf $*$-subalgebra of $U_{q}(\mathfrak{g})$.
This implies that if $A \sim B$ then we also have $A^{*} \sim B^{*}$.
Then applying $*$ to the relation $E_{\xi_{a}}^{*} E_{\xi_{b}} \sim q E_{\xi_{b}} E_{\xi_{a}}^{*}$ with $a < b$ we get $E_{\xi_{b}}^{*} E_{\xi_{a}} \sim q E_{\xi_{a}} E_{\xi_{b}}^{*}$.
The result follows upon setting $i = b$ and $j = a$.
\end{proof}

In the following we will need commutation relations for the elements $S^{-1}(E_{\xi_{i}})$. It is convenient to introduce a special notation for them.

\begin{notation}
For $i = 1, \cdots, N$ we set $\mathcal{E}_{i} = S^{-1}(E_{\xi_{i}})$.
\end{notation}

We can now easily get commutation relations between $\mathcal{E}_{i}$ and $\mathcal{E}_{j}^{*}$ with $i \neq j$ from the previous lemma, using various properties of the antipode.

\begin{corollary}
\label{cor:comm-inj}
Let $i\neq j$. Then we have $\mathcal{E}_{i} \mathcal{E}_{j}^{*} \sim q^{-1} \mathcal{E}_{j}^{*} \mathcal{E}_{i}$.
\end{corollary}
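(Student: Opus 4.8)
The plan is to deduce the relation directly from the previous proposition, $E_{\xi_i}^{*} E_{\xi_j} \sim q E_{\xi_j} E_{\xi_i}^{*}$ for $i \neq j$, by transporting it through the antipode. The two basic inputs I would use are: (i) $S^{-1}$ is an algebra anti-automorphism preserving $\Uql$, so it preserves $\sim$ (if $A \sim B$ then $S^{-1}(A) \sim S^{-1}(B)$, since $S^{-1}(Z) \in \Uql$ whenever $Z \in \Uql$); and (ii) the compatibility between the antipode and the compact real form. Because $\Uqg$ is a Hopf $*$-algebra we have $(* \circ S)^{2} = \mathrm{id}$, which unwinds to the two identities $S(x^{*}) = (S^{-1}(x))^{*}$ and $S^{-1}(x^{*}) = (S(x))^{*}$; both are readily checked on the generators from the explicit formulas for $S$ and $*$ in \cref{sec:notation}.

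First I would apply $S^{-1}$ to the proposition. Anti-multiplicativity turns $E_{\xi_i}^{*} E_{\xi_j} \sim q E_{\xi_j} E_{\xi_i}^{*}$ into
\[
S^{-1}(E_{\xi_j})\, S^{-1}(E_{\xi_i}^{*}) \sim q\, S^{-1}(E_{\xi_i}^{*})\, S^{-1}(E_{\xi_j}),
\]
where the outer factor is already $\mathcal{E}_j$ by definition. It then remains to rewrite $S^{-1}(E_{\xi_i}^{*})$ in terms of $\mathcal{E}_i^{*} = (S^{-1}(E_{\xi_i}))^{*}$. Using the identity $S^{-1}(x^{*}) = (S(x))^{*}$ gives $S^{-1}(E_{\xi_i}^{*}) = (S(E_{\xi_i}))^{*}$, so the task reduces to comparing $S(E_{\xi_i})$ with $S^{-1}(E_{\xi_i})$. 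Here I would invoke that $S^{2}$ is implemented by conjugation with the grouplike $K_{2\rho}$, hence acts as a scalar on the weight vector $E_{\xi_i}$; consequently $S(E_{\xi_i}) = c_i\, S^{-1}(E_{\xi_i}) = c_i\, \mathcal{E}_i$, with $c_i$ a power of $q$ fixed by the weight $\xi_i$ (real, as $q \in \mathbb{R}$). Thus $S^{-1}(E_{\xi_i}^{*}) = c_i\, \mathcal{E}_i^{*}$, and substituting this into the displayed relation the scalar $c_i$ cancels between the two sides; relabelling $i \leftrightarrow j$ then yields the commutation relation between $\mathcal{E}_i$ and $\mathcal{E}_j^{*}$.

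The routine part is the bookkeeping; the genuinely delicate point is tracking where the adjoint lands after the order reversal induced by $S^{-1}$, and verifying that the weight-dependent proportionality constant relating $S$ and $S^{-1}$ truly cancels rather than feeding into the final power of $q$. An essentially equivalent route is to apply $S$ instead of $S^{-1}$, using the companion identity $S(x^{*}) = (S^{-1}(x))^{*}$ together with $S(E_{\xi_j}) \propto \mathcal{E}_j$; this produces the same relation and serves as a consistency check on the coefficient. I expect the only obstacle to be precisely this antipode--$*$ bookkeeping, since no structural input beyond the previous proposition is needed.
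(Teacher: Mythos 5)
Your argument is correct and coincides with the paper's proof: apply $S^{-1}$ to the preceding proposition, use anti-multiplicativity, convert $S^{-1}(E_{\xi_i}^{*})$ into $(S(E_{\xi_i}))^{*}$ via $S^{-1}\circ * = *\circ S$, and cancel the real scalar relating $S(E_{\xi_i})$ to $S^{-1}(E_{\xi_i})$ (which the paper computes explicitly as $q^{-2(N-i+1)}$, or equivalently via $S^{2}=\mathrm{Ad}(K_{2\rho})$ acting on the weight vector). The one bookkeeping point you flag is real but harmless: this route lands on $\mathcal{E}_{i}^{*}\mathcal{E}_{j}\sim q^{-1}\mathcal{E}_{j}\mathcal{E}_{i}^{*}$, which is exactly the form invoked later in \cref{lem:square-d}, so no further relabelling is needed (and relabelling does not literally reproduce the displayed statement, whose $q$-power is off by a sign in the exponent).
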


\begin{proof}
First of all observe that $S$ and $S^{-1}$ leave $U_{q}(\mathfrak{l})$ invariant. Then if $A\sim B$ we also have $S^{-1}(A)\sim S^{-1}(B)$.
Therefore we obtain
\[
S^{-1}(E_{\xi_{j}})S^{-1}(E_{\xi_{i}}^{*})\sim qS^{-1}(E_{\xi_{i}}^{*})S^{-1}(E_{\xi_{j}}).
\]
Using the general relation $S^{-1} \circ * = * \circ S$ we rewrite it as
\[
S(E_{\xi_{i}})^{*}S^{-1}(E_{\xi_{j}})\sim q^{-1}S^{-1}(E_{\xi_{j}})S(E_{\xi_{i}})^{*}.
\]
From the definition of the antipode we get $S(E_{i}) = q^{-2} S^{-1}(E_{i})$ for all $i$.
Since $E_{\xi_{i}}$ contains the generators $E_{i}, \cdots, E_{N}$ we see that $S(E_{\xi_{i}}) = q^{-2(N - i + 1)}S^{-1}(E_{\xi_{i}})$. Alternatively one can use the general fact that $S^{2}(E_{\xi_{i}}) = q^{(2\rho,\xi_{i})}E_{\xi_{i}}$, where $\rho$ is the half-sum of the positive roots.
In any case, using one of these identities and simplifying we get
\[
S^{-1}(E_{\xi_{i}})^{*}S^{-1}(E_{\xi_{j}})\sim q^{-1}S^{-1}(E_{\xi_{j}})S^{-1}(E_{\xi_{i}})^{*}. \qedhere
\]
\end{proof}

Next we will consider the case $i = j$, where the relations turn out to be more complicated. Nevertheless, we will see that this is exactly what we need to obtain a simple formula for the square of the Dolbeault–Dirac operator.
The next result should be compared with the case $i = j$ of the relations in the quantum Clifford algebra, see \cref{prop:cliff-iej}.

\begin{proposition}
Let $i \leq N$. Then we have
\[
E_{\xi_{i}} E_{\xi_{i}}^{*} - q^{-2} E_{\xi_{i}}^{*} E_{\xi_{i}} \sim -q^{-1}(q - q^{-1}) \sum_{k = i + 1}^{N} q^{3(i - k)} E_{\xi_{k}}^{*} E_{\xi_{k}}.
\]
\end{proposition}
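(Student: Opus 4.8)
The plan is to prove the identity by downward induction on $i$, using as the main engine the relation \eqref{eq:comm-ii} already established inside the proof of \cref{lem:comm-i-ip1}. To keep the bookkeeping transparent I would write $n_j = E_{\xi_j}^* E_{\xi_j}$ and $m_j = E_{\xi_j} E_{\xi_j}^*$, so that the asserted formula reads $m_i - q^{-2} n_i \sim -q^{-1}(q-q^{-1}) \sum_{k=i+1}^N q^{3(i-k)} n_k$, with the convention that an empty sum is zero.

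For the base case $i = N$ I would compute directly, since \eqref{eq:comm-ii} is only available for indices below $N$. Here $E_{\xi_N} = E_N$ and $E_{\xi_N}^* = K_N F_N$, so using $E_N K_N = q^{-2} K_N E_N$ together with $F_N E_N = E_N F_N - \frac{K_N - K_N^{-1}}{q - q^{-1}}$ one obtains
\[
m_N - q^{-2} n_N = q^{-2} K_N E_N F_N - q^{-2} \left( K_N E_N F_N - K_N \frac{K_N - K_N^{-1}}{q - q^{-1}} \right) = q^{-2} K_N \frac{K_N - K_N^{-1}}{q - q^{-1}}.
\]
This lies in the subalgebra generated by $K_N^{\pm 1}$, hence in $U_q(\mathfrak{l})$, so $m_N - q^{-2} n_N \sim 0$, which is exactly the claim for $i = N$.

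For the inductive step, assume the formula for $i + 1$ and take $i \leq N - 1$. Applying \eqref{eq:comm-ii} with $i$ replaced by $i - 1$ gives $n_i \sim q^2 m_i + q^{-1}(n_{i+1} - m_{i+1})$, which I would immediately rearrange into the recursive form
\[
m_i - q^{-2} n_i \sim -q^{-3}(n_{i+1} - m_{i+1}).
\]
The next step is to rewrite the right-hand side using the induction hypothesis. Writing $n_{i+1} - m_{i+1} = (1 - q^{-2}) n_{i+1} - (m_{i+1} - q^{-2} n_{i+1})$ and substituting the hypothesis for $m_{i+1} - q^{-2} n_{i+1}$, the crucial identity $1 - q^{-2} = q^{-1}(q - q^{-1})$ lets the leading term fuse with the sum, yielding
\[
n_{i+1} - m_{i+1} \sim q^{-1}(q - q^{-1}) \sum_{k = i+1}^N q^{3(i + 1 - k)} n_k.
\]
Multiplying by $-q^{-3}$ and absorbing it via $q^{-3} q^{3(i+1-k)} = q^{3(i-k)}$ then reproduces precisely the claimed expression, closing the induction.

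All individual computations are routine once \eqref{eq:comm-ii} is granted, so the main obstacle is purely organisational: choosing the downward induction, keeping track of the exponents of $q$ in the telescoping sum, and noticing that $1 - q^{-2} = q^{-1}(q - q^{-1})$ is exactly what makes the recursion close into the stated geometric coefficients $q^{3(i-k)}$. A secondary point requiring care is that \eqref{eq:comm-ii} is valid only for indices strictly below $N$, which is why the base case $i = N$ must be handled by the separate direct calculation above.
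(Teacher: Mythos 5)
Your proof is correct and follows essentially the same route as the paper: downward induction from $i=N$, with the recursion coming from \eqref{eq:comm-ii} and the identity $1-q^{-2}=q^{-1}(q-q^{-1})$ closing the telescoping sum. The only cosmetic difference is that you verify the base case $E_{\xi_N}E_{\xi_N}^* - q^{-2}E_{\xi_N}^*E_{\xi_N}\sim 0$ by a direct computation with $K_N F_N E_N$, whereas the paper cites the identity $E_N^*E_N\sim q^2 E_N E_N^*$ already established in the proof of \cref{lem:comm-i-ip1}; both are fine.
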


\begin{proof}
First of all observe that for $i = N$ this is $E_{\xi_{N}} E_{\xi_{N}}^{*} - q^{-2} E_{\xi_{N}}^{*} E_{\xi_{N}} \sim 0$, which we have proven in the course of \cref{lem:comm-i-ip1}. Also recall the relation \eqref{eq:comm-ii}, which reads
\[
E_{\xi_{i}}^{*} E_{\xi_{i}} - q^{2} E_{\xi_{i}}E_{\xi_{i}}^{*} \sim q^{-1} [E_{\xi_{i + 1}}^{*}, E_{\xi_{i + 1}}].
\]
We can rewrite it in the following form
\[
E_{\xi_{i}} E_{\xi_{i}}^{*} - q^{-2} E_{\xi_{i}}^{*} E_{\xi_{i}}
\sim q^{-3} (E_{\xi_{i + 1}} E_{\xi_{i + 1}}^{*} - q^{-2} E_{\xi_{i + 1}}^{*} E_{\xi_{i + 1}}) - q^{-4} (q - q^{-1}) E_{\xi_{i + 1}}^{*} E_{\xi_{i + 1}}.
\]
Now we proceed by induction. We plug the formula for $E_{\xi_{i + 1}} E_{\xi_{i + 1}}^{*} - q^{-2} E_{\xi_{i + 1}}^{*} E_{\xi_{i + 1}}$ into the previous identity. Then upon relabeling the sum we get
\[
\begin{split}
E_{\xi_{i}} E_{\xi_{i}}^{*} - q^{-2} E_{\xi_{i}}^{*} E_{\xi_{i}}
& = -q^{-4} (q - q^{-1}) \sum_{k = i + 2}^{N} q^{3(i + 1 - k)} E_{\xi_{k}}^{*} E_{\xi_{k}}-q^{-4} (q - q^{-1}) E_{\xi_{i + 1}}^{*} E_{\xi_{i + 1}}\\
& = - q^{-1}(q - q^{-1}) \sum_{k = i + 1}^{N}q^{3(i - k)} E_{\xi_{k}}^{*} E_{\xi_{k}}. \qedhere
\end{split}
\]
\end{proof}

\begin{corollary}
\label{cor:comm-iej}
Let $i \leq N$. Then we have
\[
\mathcal{E}_{i}^{*} \mathcal{E}_{i} - q^{-2} \mathcal{E}_{i} \mathcal{E}_{i}^{*}
\sim - q^{-1} (q - q^{-1}) \sum_{k = i + 1}^{N} q^{i - k} \mathcal{E}_{k} \mathcal{E}_{k}^{*}.
\]
\end{corollary}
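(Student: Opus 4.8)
The plan is to deduce this corollary from the preceding proposition by applying the antipode $S^{-1}$, exactly in the spirit of the proof of \cref{cor:comm-inj}. Since $S^{-1}$ leaves $U_{q}(\mathfrak{l})$ invariant, it preserves the relation $\sim$: if $A \sim B$ then $S^{-1}(A) \sim S^{-1}(B)$. I would therefore start from the relation of the previous proposition,
\[
E_{\xi_{i}} E_{\xi_{i}}^{*} - q^{-2} E_{\xi_{i}}^{*} E_{\xi_{i}} \sim -q^{-1}(q - q^{-1}) \sum_{k = i + 1}^{N} q^{3(i - k)} E_{\xi_{k}}^{*} E_{\xi_{k}},
\]
and apply $S^{-1}$ to both sides, tracking how every monomial transforms.

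The key computation is the image of a monomial of the form $E_{\xi_{a}}^{*} E_{\xi_{b}}$. Recalling that $S^{-1}$ is an algebra anti-homomorphism and that $S^{-1} \circ * = * \circ S$, I would write $S^{-1}(E_{\xi_{a}}^{*} E_{\xi_{b}}) = S^{-1}(E_{\xi_{b}}) S^{-1}(E_{\xi_{a}}^{*}) = \mathcal{E}_{b}\, (S(E_{\xi_{a}}))^{*}$. Using the scaling $S(E_{\xi_{a}}) = q^{-2(N - a + 1)} \mathcal{E}_{a}$ (equivalently $S^{2}(E_{\xi_{a}}) = q^{(2\rho, \xi_{a})} E_{\xi_{a}}$), already invoked in \cref{cor:comm-inj}, together with the reality of $q$, this becomes $q^{-2(N - a + 1)} \mathcal{E}_{b} \mathcal{E}_{a}^{*}$. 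Applying this to the two terms on the left-hand side (with $a = b = i$) yields the common prefactor $q^{-2(N - i + 1)}$ multiplying $\mathcal{E}_{i}^{*} \mathcal{E}_{i} - q^{-2} \mathcal{E}_{i} \mathcal{E}_{i}^{*}$, while the $k$-th summand on the right produces $q^{-2(N - k + 1)} \mathcal{E}_{k} \mathcal{E}_{k}^{*}$.

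What remains is purely exponent bookkeeping, which I expect to be the only delicate point. After dividing through by the left-hand prefactor $q^{-2(N - i + 1)}$, the exponent of $q$ accompanying $\mathcal{E}_{k} \mathcal{E}_{k}^{*}$ in the $k$-th term becomes
\[
3(i - k) - 2(N - k + 1) + 2(N - i + 1) = i - k,
\]
so that the explicit dependence on $N$ cancels and the surviving factor is exactly $q^{i - k}$. This reproduces the claimed identity
\[
\mathcal{E}_{i}^{*} \mathcal{E}_{i} - q^{-2} \mathcal{E}_{i} \mathcal{E}_{i}^{*} \sim - q^{-1} (q - q^{-1}) \sum_{k = i + 1}^{N} q^{i - k} \mathcal{E}_{k} \mathcal{E}_{k}^{*}.
\]
The main obstacle is thus not conceptual but combinatorial: one must correctly combine the three sources of powers of $q$ — the $q^{3(i - k)}$ already present, the degree-dependent antipode scalars $q^{-2(N - \bullet + 1)}$ arising on the left and inside the sum, and the overall renormalization — and verify that the $N$-dependent pieces cancel precisely to leave $q^{i - k}$.
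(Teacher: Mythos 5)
Your proposal is correct and follows essentially the same route as the paper: apply $S^{-1}$ to the preceding proposition, use $S^{-1} \circ * = * \circ S$ together with the scaling $S(E_{\xi_a}) = q^{-2(N-a+1)} S^{-1}(E_{\xi_a})$, and verify that the $N$-dependent powers of $q$ cancel to leave $q^{i-k}$. The exponent bookkeeping $3(i-k) - 2(N-k+1) + 2(N-i+1) = i-k$ is exactly the simplification the paper performs.
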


\begin{proof}
Applying $S^{-1}$ and using $S^{-1} \circ * = * \circ S$ in the previous lemma we get
\[
S(E_{\xi_{i}})^{*} S^{-1}(E_{\xi_{i}}) - q^{-2} S^{-1} (E_{\xi_{i}}) S(E_{\xi_{i}})^{*}
\sim - q^{-1}(q - q^{-1}) \sum_{k = i + 1}^{N} q^{3(i - k)} S^{-1}(E_{\xi_{k}}) S(E_{\xi_{k}})^{*}.
\]
Now recall the relation $S(E_{\xi_{i}}) = q^{-2(N - i + 1)}S^{-1}(E_{\xi_{i}})$, derived during the proof of \cref{cor:comm-inj}. Plugging this in and simplifying we obtain the claimed result.
\end{proof}

\subsection{Adjoint action}
By \cite[Theorem 5.6]{zwi} the algebra generated by the quantum root vectors $\{E_{\xi_{i}}\}_{i = 1}^N$, called the \emph{twisted quantum Schubert cell}, can be identified as a graded $U_{q}(\mathfrak{l})$-module with the quantum symmetric algebra $S_{q}(\mathfrak{u}_{+})$.
We have previously introduced an orthonormal basis $\{e_{i}\}_{i = 1}^N$ with respect a Hermitian inner product on $\mathfrak{u}_{+}$.
In this subsection we will give the relation between these two bases.
In order to do that we need to compute the adjoint action of $U_{q}(\mathfrak{l})$ on the quantum root vectors.
Recall that the (left) adjoint action is defined by $X \triangleright Y = X_{(1)} Y S(X_{(2)})$, where we use the standard Sweedler notation.

\begin{lemma}
\label{lem:adj-act}
Let $i \leq N$ and $j < N$. Then we have
\[
E_{j} \triangleright E_{\xi_{i}} = -\delta_{j,i - 1} E_{\xi_{i - 1}}, \quad
F_{j} \triangleright E_{\xi_{i}} = -\delta_{j,i} E_{\xi_{i + 1}}.
\]
\end{lemma}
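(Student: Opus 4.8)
The plan is to first turn the abstract adjoint action into concrete operators on $U_q(\mathfrak{g})$ using the Hopf structure fixed in \cref{sec:notation}. From $\Delta(E_j) = E_j \otimes 1 + K_j \otimes E_j$ and $S(E_j) = -K_j^{-1} E_j$ one gets $E_j \triangleright Y = E_j Y - K_j Y K_j^{-1} E_j$, while from $\Delta(F_j) = F_j \otimes K_j^{-1} + 1 \otimes F_j$ and $S(F_j) = -F_j K_j$ one gets $F_j \triangleright Y = [F_j, Y] K_j$. Since $E_{\xi_i}$ has weight $\xi_i = \sum_{k=i}^N \alpha_k$, I would use $K_j E_{\xi_i} K_j^{-1} = q^{(\alpha_j, \xi_i)} E_{\xi_i}$ to write $E_j \triangleright E_{\xi_i} = E_j E_{\xi_i} - q^{(\alpha_j, \xi_i)} E_{\xi_i} E_j$, recording the exponent $(\alpha_j, \xi_i) = \sum_{k=i}^N a_{jk}$ read off from the Cartan matrix of $\mathfrak{sl}_{N+1}$.

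For the vanishing of most components I would avoid a case-by-case commutator analysis by exploiting the module structure recalled at the start of this subsection: by \cite[Theorem 5.6]{zwi} the span $\mathrm{span}\{E_{\xi_k}\}_{k=1}^N$ is the degree-one part of the twisted quantum Schubert cell, hence a $U_q(\mathfrak{l})$-submodule under the adjoint action. Therefore $E_j \triangleright E_{\xi_i}$ and $F_j \triangleright E_{\xi_i}$, for $j < N$, again lie in $\mathrm{span}\{E_{\xi_k}\}$. Since these root vectors have the distinct weights $\xi_k$, and $E_j$ (resp.\ $F_j$) shifts the weight by $+\alpha_j$ (resp.\ $-\alpha_j$), the result can only be a multiple of the unique $E_{\xi_k}$ whose weight is $\xi_i \pm \alpha_j$. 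A direct inspection, using $\xi_{i-1} - \xi_i = \alpha_{i-1}$ and $\xi_{i+1} - \xi_i = -\alpha_i$ together with the fact that no other difference $\xi_k - \xi_i$ is $\pm$ a single simple root, shows that $\xi_i + \alpha_j$ is a weight $\xi_k$ only for $j = i-1$ and $\xi_i - \alpha_j$ only for $j = i$; in every other case there is no admissible weight and the action must vanish. This disposes of all the Kronecker deltas at once, including the delicate situations $E_i \triangleright E_{\xi_i}$ and $F_j \triangleright E_{\xi_i}$ with $i < j$, where $E_{\xi_i}$ genuinely contains the generator being commuted and a naive commutator argument would not obviously vanish.

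It then remains to compute the two nonzero coefficients, and here the recursion relation of \cref{lem:recursion-roots} is the decisive tool. For $j = i-1$ I would use $(\alpha_{i-1}, \xi_i) = -1$ to obtain $E_{i-1} \triangleright E_{\xi_i} = E_{i-1} E_{\xi_i} - q^{-1} E_{\xi_i} E_{i-1}$, which is exactly $-E_{\xi_{i-1}}$ by \cref{lem:recursion-roots} applied at index $i-1$. For $j = i$ I would reuse the commutator $[F_i, E_{\xi_i}] = -q^{-1} K_i^{-1} E_{\xi_{i+1}}$ already established inside the proof of \cref{lem:first-lem-comm}; then $F_i \triangleright E_{\xi_i} = [F_i, E_{\xi_i}] K_i = -q^{-1} K_i^{-1} E_{\xi_{i+1}} K_i = -E_{\xi_{i+1}}$, using $(\alpha_i, \xi_{i+1}) = -1$ to push $K_i$ through $E_{\xi_{i+1}}$. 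The main obstacle, should one instead prefer a self-contained proof that does not invoke the module identification, is precisely the two vanishing statements $E_i \triangleright E_{\xi_i} = 0$ and $F_j \triangleright E_{\xi_i} = 0$ for $i < j$; these can still be reached by substituting \cref{lem:recursion-roots} and reducing, through an induction that isolates the relevant rank-two configuration, to the quantum Serre relation for $E_i, E_{i+1}$ (respectively to the cancellation $[F_j, E_{\xi_{j-1}}] = 0$, where $E_{j-1}$ commutes with $E_{\xi_{j+1}}$). The weight argument above is what makes these cancellations transparent and lets one bypass the bookkeeping entirely.
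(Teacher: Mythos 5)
Your proposal is correct and follows essentially the same route as the paper: both arguments invoke the $U_q(\mathfrak{l})$-invariance of $\mathrm{span}\{E_{\xi_k}\}$ from \cite[Theorem 5.6]{zwi}, use the weight/Cartan-action argument to show the only admissible target is $\xi_i+\alpha_{i-1}=\xi_{i-1}$ (resp.\ $\xi_i-\alpha_i=\xi_{i+1}$), and then extract the coefficient $-1$ via \cref{lem:recursion-roots}. Your explicit treatment of the $F_j$ case through the commutator $[F_i,E_{\xi_i}]=-q^{-1}K_i^{-1}E_{\xi_{i+1}}$ merely fills in what the paper leaves as ``a similar argument,'' and it checks out.
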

\begin{proof}
It follows from general arguments that the subspace generated by $\{ E_{\xi_{i}} \}_{i = 1}^N$ is invariant under the adjoint action of $U_{q}(\mathfrak{l})$, see \cite[Theorem 5.6]{zwi}. Therefore
$$
E_{j} \triangleright E_{\xi_{i}} = \sum_{k = 1}^{N} c_{i j}^{k} E_{\xi_{k}},
\quad c_{i j}^{k} \in \mathbb{C}.
$$
Acting with an element $K_{\lambda}$ and using the fact that $K_{\lambda} \triangleright E_{\xi_{k}} = q^{(\lambda, \xi_{k})} E_{\xi_{k}}$ we get
$$
K_{\lambda} \triangleright (E_{j} \triangleright E_{\xi_{i}}) = \sum_{k = 1}^{N} c_{i j}^{k} q^{(\lambda, \xi_{k})} E_{\xi_{k}}.
$$
On the other hand we have $E_{j} \triangleright E_{\xi_{i}} = E_{j} E_{\xi_{i}} - K_{j} E_{\xi_{i}} K_{j}^{-1} E_{j}$. Acting with $K_{\lambda}$ on the right-hand side of this expression and using $K_{\lambda} \triangleright (X Y) = (K_{\lambda} \triangleright X) (K_{\lambda} \triangleright Y)$ we get
$$
K_{\lambda} \triangleright (E_{j} \triangleright E_{\xi_{i}})
= q^{(\lambda, \xi_{i})} q^{(\lambda, \alpha_{j})} E_{j} \triangleright E_{\xi_{i}}
$$
Therefore, using the linear independence of the quantum root vectors, we see that the term $E_{\xi_{k}}$ appears in the sum inside $E_{j} \triangleright E_{\xi_{i}}$ if and only if $\xi_{k} = \xi_{i} + \alpha_{j}$.

In our case the radical roots can be written in terms of the simple roots as $\xi_{i} = \sum_{a = i}^{N} \alpha_{a}$. Therefore we get an equality only in the case $j = k = i - 1$. Then we obtain
$$
E_{i - 1} \triangleright E_{\xi_{i}} = c_{i, i - 1}^{i - 1} E_{\xi_{i - 1}}, \quad
E_{j} \triangleright E_{\xi_{i}} = 0, \quad j \neq i - 1.
$$
The missing coefficient can be easily found as follows
\[
\begin{split}
E_{i - 1} \triangleright E_{\xi_{i}}
& = E_{i - 1} E_{\xi_{i}} - K_{i - 1} E_{\xi_{i}} K_{i - 1}^{-1} E_{i - 1}\\
& = E_{i - 1} E_{\xi_{i}} - q^{-1} E_{\xi_{i}}E_{i - 1}
= - E_{\xi_{i - 1}}.
\end{split}
\]

By a similar argument one finds the result also for $F_{j} \triangleright E_{\xi_{i}}$.
\end{proof}

For the next proposition we use the notation $V = \mathrm{span} \{ E_{\xi_{1}}, \cdots, E_{\xi_{N}} \}$.

\begin{proposition}
\label{prop:orthonorm-basis}
Let $\{ e_{i} \}_{i = 1}^N$ be the weight basis of $\mathfrak{u}_{+}$ introduced previously.
Then we can identify this basis with the rescaled quantum root vectors $\{ \tilde{e}_{i} = q^{-i/2} E_{\xi_{i}} \}_{i = 1}^N$ of $V$.
\end{proposition}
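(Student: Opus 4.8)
The plan is to exploit the fact, recorded above via \cite{zwi}, that $V = \mathrm{span}\{E_{\xi_1}, \cdots, E_{\xi_N}\}$ equipped with the adjoint action of $U_q(\mathfrak{l})$ is an irreducible module isomorphic to $\mathfrak{u}_+$, i.e. to the fundamental representation of $U_q(\mathfrak{sl}_N)$ whose raising operators are the $E_j$ with $j < N$. By \cref{lem:adj-act} we have $E_j \triangleright E_{\xi_1} = -\delta_{j,0} E_{\xi_0} = 0$ for all such $j$, so $E_{\xi_1}$ is annihilated by all raising operators and hence is, up to a scalar, the highest weight vector; this matches the convention that $e_1$ is the highest weight vector. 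By Schur's lemma the isomorphism $V \cong \mathfrak{u}_+$ is unique up to a scalar and sends weight vectors to weight vectors, so necessarily $E_{\xi_i} = c_i e_i$ for some constants $c_i$. The statement thus reduces to determining the ratios $c_{i+1}/c_i$, which I would do by comparing norms.

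First I would record that the $E_{\xi_i}$ are pairwise orthogonal. This follows exactly as in the lemma establishing $(e_i, e_j) = \delta_{ij}$: since $K_\lambda^* = K_\lambda$ and $K_\lambda \triangleright E_{\xi_k} = q^{(\lambda, \xi_k)} E_{\xi_k}$, the compatibility $(x, a \triangleright z) = (a^* \triangleright x, z)$ forces $(E_{\xi_i}, E_{\xi_j}) = 0$ whenever $\xi_i \neq \xi_j$, which holds for $i \neq j$.

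The heart of the argument is the norm recursion. Using $E_{\xi_{i+1}} = -F_i \triangleright E_{\xi_i}$ from \cref{lem:adj-act}, together with the compatibility applied with $a = F_i$ and $F_i^* = E_i K_i^{-1}$, I obtain
\[
(E_{\xi_{i+1}}, E_{\xi_{i+1}}) = (F_i \triangleright E_{\xi_i}, F_i \triangleright E_{\xi_i}) = \bigl((E_i K_i^{-1} F_i) \triangleright E_{\xi_i}, E_{\xi_i}\bigr).
\]
Evaluating the right-hand side step by step with \cref{lem:adj-act} and the weight pairing $(\alpha_i, \xi_{i+1}) = -1$ gives $(E_i K_i^{-1} F_i) \triangleright E_{\xi_i} = q E_{\xi_i}$, so that, $q$ being real and $(\cdot, \cdot)$ conjugate-linear in the first slot,
\[
(E_{\xi_{i+1}}, E_{\xi_{i+1}}) = q\,(E_{\xi_i}, E_{\xi_i}), \qquad \text{hence} \qquad (E_{\xi_i}, E_{\xi_i}) = q^{\,i-1}(E_{\xi_1}, E_{\xi_1}).
\]

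Finally I would conclude. Setting $\tilde{e}_i = q^{-i/2} E_{\xi_i}$ gives $(\tilde{e}_i, \tilde{e}_i) = q^{-i}(E_{\xi_i}, E_{\xi_i}) = q^{-1}(E_{\xi_1}, E_{\xi_1})$, a value independent of $i$, while the $\tilde{e}_i$ stay pairwise orthogonal and ordered by weight with $\tilde{e}_1$ highest. Since the invariant Hermitian inner product on the irreducible module $V \cong \mathfrak{u}_+$ is unique up to a positive scalar, I may normalize it so that this common value equals $1$; then $\{\tilde{e}_i\}$ is an orthonormal weight basis with highest weight vector $\tilde{e}_1$, which is precisely the characterization of $\{e_i\}$, yielding the identification $e_i = \tilde{e}_i$. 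I expect the main obstacle to be the bookkeeping in the norm recursion: carrying the adjoint action of $E_i K_i^{-1} F_i$ correctly through \cref{lem:adj-act}, getting the sign and the $q$-power right, and handling the conjugate-linearity convention together with the reality of $q$. Everything else is forced by irreducibility and the uniqueness of the invariant inner product.
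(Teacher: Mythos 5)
Your argument is correct and is essentially the paper's proof run in the opposite direction: where the paper declares $(\tilde e_i,\tilde e_j)=\delta_{ij}$ and then verifies compatibility with the $*$-structure using \cref{lem:adj-act}, you assume a compatible invariant inner product exists on the irreducible module $V$ and use the same adjoint-action formulas to derive the norm recursion $(E_{\xi_{i+1}},E_{\xi_{i+1}})=q\,(E_{\xi_i},E_{\xi_i})$, which forces the rescaling $q^{-i/2}$. Your key computation $(E_iK_i^{-1}F_i)\triangleright E_{\xi_i}=qE_{\xi_i}$ (via $(\alpha_i,\xi_{i+1})=-1$) checks out, so the two arguments are equivalent.
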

\begin{proof}
We will define a Hermitian inner product $(\cdot, \cdot): V \otimes V \to \mathbb{C}$ such that $(v, X \triangleright w) = (X^{*} \triangleright v, w)$ for all $X \in U_{q}(\mathfrak{l})$ and $v, w \in V$.
To this end, we rescale the quantum root vectors as $\tilde{e}_{i} = q^{-i/2} E_{\xi_{i}} \in U_{q}(\mathfrak{g})$.
By the previous lemma we have
$$
E_{j} \triangleright \tilde{e}_{i} = - \delta_{j, i - 1} q^{-1/2} \tilde{e}_{i - 1}, \quad
F_{j} \triangleright \tilde{e}_{i} = - \delta_{j,i} q^{1/2} \tilde{e}_{i + 1}.
$$
Now define $(\tilde{e}_{i}, \tilde{e}_{j}) = \delta_{ij}$. We will show that it satisfies the claimed property.

It is clear that $(\tilde{e}_{i}, K_{\lambda} \triangleright \tilde{e}_{j}) = (K_{\lambda}^{*} \triangleright \tilde{e}_{i}, \tilde{e}_{j})$, since $\{ \tilde{e}_{i} \}$ is a weight basis and $K_{\lambda}^{*} = K_{\lambda}$.
Now consider the action of the generators $E_{j}$. The only non-zero inner product is $(\tilde{e}_{i - 1}, E_{i - 1} \triangleright \tilde{e}_{i}) = - q^{-1/2}$.
On the other hand, using $E_{j}^{*} = K_{j} F_{j}$, we easily compute
\[
\begin{split}
(E_{i - 1}^{*} \triangleright \tilde{e}_{i - 1}, \tilde{e}_{i}) 
& =(K_{i - 1} F_{i - 1} \triangleright \tilde{e}_{i - 1}, e_{i})
= - q^{1/2} (K_{i - 1} \triangleright \tilde{e}_{i}, e_{i})\\
& = - q^{1/2} q^{-1} (\tilde{e}_{i}, \tilde{e}_{i}) = - q^{-1/2}.
\end{split}
\]
Therefore the claim is true for $E_{j}$. A similar computation show that this is the case also for the generators $F_{j}$. Hence $(\cdot, \cdot)$ satisfies the claimed property.

Now recall that we have set $(e_{i}, e_{j}) = \delta_{ij}$ for the basis $\{ e_{i} \}_{i = 1}^N$ of $\mathfrak{u}_{+}$. Since $\{ e_{i} \}_{i = 1}^N$ and $\{ \tilde{e}_{i} \}_{i = 1}^N$ are weight bases, the claimed identification follows.
\end{proof}

\section{Dolbeault–Dirac operator}
\label{sec:dolbeault}

In this section we will compute the square of the Dolbeault–Dirac operator, up to terms which live in the quantized Levi factor $\Uql$.
It will take a particularly simple form only for some choices corresponding to the dual pairings and Hermitian inner products. We will briefly discuss the meaning of these choices.
As a byproduct of this formula, we will prove that the Dolbeault–Dirac operator has compact resolvent, similarly to \cite{dd-proj}.

\subsection{Computation of $D^{2}$}

First we recall its definition, as in \cite[Definition 5.7]{qflag2}.
Fix a weight basis $\{x_{i}\}_{i = 1}^N$ of the $U_{q}(\mathfrak{l})$-module $\mathfrak{u}_{+}$, in such a way that $x_{i}$ corresponds to $E_{\xi_{i}}$ under the previously mentioned isomorphism, see \cite[Convention 4.1]{qflag2}.
Let $\{y_{i}\}_{i = 1}^N$ be the dual basis of $\mathfrak{u}_{-}$ with respect to the pairing.
Also recall that we write $\mathcal{E}_{i} = S^{-1}(E_{\xi_{i}})$.

\begin{definition}
With the notation as above, we set
$$
\eth = \sum_{i = 1}^{N} \mathcal{E}_{i} \otimes \gamma_{-}(y_{i}).
$$
Then the \emph{Dolbeault–Dirac operator} is defined as $D = \eth + \eth^{*}$.
\end{definition}

To ease the notation we abbreviate $\gamma_{-}(y_{i}) = \gamma_{i}$ in the next lemma.

\begin{lemma}
\label{lem:square-d}
We have $D^{2} = D^{2}_{D} + D^{2}_{O}$, where
\[
\begin{split}
D_{D}^{2} & \sim \sum_{i = 1}^{N} \mathcal{E}_{i} \mathcal{E}_{i}^{*} \otimes \left(\gamma_{i} \gamma_{i}^{*} + q^{-2} \gamma_{i}^{*} \gamma_{i} - q^{-1} (q - q^{-1}) \sum_{j = 1}^{i - 1} q^{j - i} \gamma_{j}^{*} \gamma_{j}\right), \\
D_{O}^{2} & \sim \sum_{i \neq j} \mathcal{E}_{i} \mathcal{E}_{j}^{*} \otimes (\gamma_{i} \gamma_{j}^{*} + q^{-1} \gamma_{j}^{*} \gamma_{i}).
\end{split}
\]
\end{lemma}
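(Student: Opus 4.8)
The plan is to expand $D^{2} = (\eth + \eth^{*})^{2}$ and to show that, modulo $\Uql \otimes \mathrm{Cl}_{q}$, only the mixed terms survive. Writing $\eth^{*} = \sum_{i} \mathcal{E}_{i}^{*} \otimes \gamma_{i}^{*}$, the four summands are $\eth^{2}$, $(\eth^{*})^{2}$, $\eth \eth^{*}$ and $\eth^{*} \eth$. The first step is to argue $\eth^{2} \sim 0$. Since $\gamma_{i} \gamma_{j} = \gamma_{-}(y_{i} \wedge y_{j})$ factors through the exterior algebra $\Lambda_{q}(\mathfrak{u}_{-})$, while the products $\mathcal{E}_{i} \mathcal{E}_{j}$ reduce, modulo $\Uql$, to the relations of the twisted quantum Schubert cell (isomorphic to $S_{q}(\mathfrak{u}_{+})$ by \cite[Theorem 5.6]{zwi}), the expression $\eth^{2} = \sum_{i,j} \mathcal{E}_{i} \mathcal{E}_{j} \otimes \gamma_{i} \gamma_{j}$ contracts a symmetric structure against an antisymmetric one. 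This is the quantum Koszul identity attached to the quadratic duality $\Lambda_{q}(\mathfrak{u}_{-}) = S_{q}(\mathfrak{u}_{+})^{!}$; it is the algebraic counterpart of the classical $\eth^{2} = 0$ and may alternatively be quoted from \cite{qflag2}. The vanishing $(\eth^{*})^{2} \sim 0$ then follows by applying the involution $*$, which preserves $\Uql$. After this reduction, $D^{2} \sim \eth \eth^{*} + \eth^{*} \eth = \sum_{i,j} \mathcal{E}_{i} \mathcal{E}_{j}^{*} \otimes \gamma_{i} \gamma_{j}^{*} + \sum_{i,j} \mathcal{E}_{i}^{*} \mathcal{E}_{j} \otimes \gamma_{i}^{*} \gamma_{j}$.

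Next I would split each sum into its $i = j$ part (giving $D_{D}^{2}$) and its $i \neq j$ part (giving $D_{O}^{2}$), and normal-order every monomial into the form $\mathcal{E}_{a} \mathcal{E}_{b}^{*}$. The first sum is already normal-ordered; only the factor $\mathcal{E}_{i}^{*} \mathcal{E}_{j}$ in the second sum needs to be moved. For the off-diagonal contribution I would use \cref{cor:comm-inj} in the form $\mathcal{E}_{i}^{*} \mathcal{E}_{j} \sim q^{-1} \mathcal{E}_{j} \mathcal{E}_{i}^{*}$ for $i \neq j$, so that $\sum_{i \neq j} \mathcal{E}_{i}^{*} \mathcal{E}_{j} \otimes \gamma_{i}^{*} \gamma_{j} \sim q^{-1} \sum_{i \neq j} \mathcal{E}_{j} \mathcal{E}_{i}^{*} \otimes \gamma_{i}^{*} \gamma_{j}$. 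Relabelling $i \leftrightarrow j$ and adding the off-diagonal part of $\eth \eth^{*}$ gives exactly $D_{O}^{2} \sim \sum_{i \neq j} \mathcal{E}_{i} \mathcal{E}_{j}^{*} \otimes (\gamma_{i} \gamma_{j}^{*} + q^{-1} \gamma_{j}^{*} \gamma_{i})$.

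For the diagonal contribution I would use \cref{cor:comm-iej} rearranged as $\mathcal{E}_{i}^{*} \mathcal{E}_{i} \sim q^{-2} \mathcal{E}_{i} \mathcal{E}_{i}^{*} - q^{-1} (q - q^{-1}) \sum_{k = i+1}^{N} q^{i - k} \mathcal{E}_{k} \mathcal{E}_{k}^{*}$, substitute it into $\sum_{i} \mathcal{E}_{i}^{*} \mathcal{E}_{i} \otimes \gamma_{i}^{*} \gamma_{i}$, and add $\sum_{i} \mathcal{E}_{i} \mathcal{E}_{i}^{*} \otimes \gamma_{i} \gamma_{i}^{*}$. This produces the terms $\gamma_{i} \gamma_{i}^{*} + q^{-2} \gamma_{i}^{*} \gamma_{i}$ attached to $\mathcal{E}_{i} \mathcal{E}_{i}^{*}$, together with the double sum $-q^{-1}(q - q^{-1}) \sum_{i} \sum_{k > i} q^{i - k} \mathcal{E}_{k} \mathcal{E}_{k}^{*} \otimes \gamma_{i}^{*} \gamma_{i}$. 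The decisive bookkeeping step is to interchange the summation order so that $\mathcal{E}_{k} \mathcal{E}_{k}^{*}$ becomes the outer index; renaming $k \to i$ and the inner variable to $j$ turns this into $-q^{-1}(q - q^{-1}) \sum_{i} \mathcal{E}_{i} \mathcal{E}_{i}^{*} \otimes \sum_{j = 1}^{i - 1} q^{j - i} \gamma_{j}^{*} \gamma_{j}$, which is precisely the remaining term of $D_{D}^{2}$.

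The commutations and the off-diagonal relabelling are mechanical, so I expect two genuine obstacles. The first is making $\eth^{2} \sim 0$ rigorous at the level of $\Uqg \otimes \mathrm{Cl}_{q}$: because the symmetric relations among the $\mathcal{E}_{i}$ hold only modulo $\Uql$, the Koszul cancellation must be tracked up to such terms, and the ordering conventions of $S_{q}(\mathfrak{u}_{+})$ must be matched with the exterior relations of $\gamma_{-}$ from \cref{prop:exterior-dual-rels} with the correct powers of $q$. The second is the reindexing in the diagonal term: one must check that the geometric weights $q^{i - k}$ delivered by \cref{cor:comm-iej} convert exactly into the weights $q^{j - i}$ appearing in the target after the summation order is swapped, since it is this computation that fixes the final shape of $D_{D}^{2}$.
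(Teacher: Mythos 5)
Your proposal is correct and follows essentially the same route as the paper: the paper likewise disposes of $\eth^{2}$ by quoting \cite[Theorem 1.1]{qflag2} (which gives $\eth^{2}=0$ exactly in $U_{q}(\mathfrak{g})\otimes\mathrm{Cl}_{q}$, so the Koszul-duality bookkeeping you flag as an obstacle never actually arises), then splits into diagonal and off-diagonal parts and applies \cref{cor:comm-inj} and \cref{cor:comm-iej} exactly as you do. The reindexing of the weights $q^{i-k}$ into $q^{j-i}$ that you single out is performed in the paper precisely via the interchange $\sum_{i=1}^{N}\sum_{j=i+1}^{N}f(i,j)=\sum_{i=1}^{N}\sum_{j=1}^{i-1}f(j,i)$ that you describe.
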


\begin{proof}
We know from \cite[Theorem 1.1]{qflag2} that $\eth^{2} = 0$. Therefore we get
$$
D^{2} = \sum_{i, j = 1}^{N} \mathcal{E}_{i} \mathcal{E}_{i}^{*} \otimes \gamma_{i} \gamma_{j}^{*} + \sum_{i, j = 1}^{N} \mathcal{E}_{i}^{*} \mathcal{E}_{i} \otimes \gamma_{i}^{*} \gamma_{j}.
$$
We divide $D^{2}$ into the two terms $D^{2}_{D}$ and $D^{2}_{O}$, depending on whether $i = j$ or $i \neq j$ (the subscripts denote respectively diagonal and off-diagonal).
We will rewrite these terms using the commutation relations obtained previously.
We start with $D_{O}^{2}$. From \cref{cor:comm-inj} we have the relation $\mathcal{E}_{i}^{*} \mathcal{E}_{j} \sim q^{-1} \mathcal{E}_{j} \mathcal{E}_{i}^{*}$ for $i \neq j$. Plugging this in and relabeling we obtain
\[
D_{O}^{2} \sim \sum_{i \neq j} \mathcal{E}_{i} \mathcal{E}_{j}^{*} \otimes (\gamma_{i} \gamma_{j}^{*} + q^{-1} \gamma_{j}^{*} \gamma_{i}).
\]
The term $D_{D}^{2}$ is more complicated. From \cref{cor:comm-iej} we have
\[
\mathcal{E}_{i}^{*} \mathcal{E}_{i} \sim q^{-2} \mathcal{E}_{i} \mathcal{E}_{i}^{*} -q^{-1} (q - q^{-1}) \sum_{j = i + 1}^{N} q^{i - j}\mathcal{E}_{j} \mathcal{E}_{j}^{*}.
\]
Plugging this in we get
\[
D^{2}_{D} \sim \sum_{i = 1}^{N} \mathcal{E}_{i} \mathcal{E}_{i}^{*} \otimes (\gamma_{i} \gamma_{i}^{*} + q^{-2} \gamma_{i}^{*} \gamma_{i}) - q^{-1} (q - q^{-1})\sum_{i = 1}^{N} \sum_{j = i + 1}^{N} q^{i - j} \mathcal{E}_{j} \mathcal{E}_{j}^{*} \otimes \gamma_{i}^{*} \gamma_{i}.
\]
We can invert the two sums in the last term using the general relation
\[
\sum_{i=1}^{N}\sum_{j=i+1}^{N}f(i,j)=\sum_{i=1}^{N}\sum_{j=1}^{i-1}f(j,i).
\]
Therefore we obtain the expression
\[
D^{2}_{D} \sim \sum_{i = 1}^{N} \mathcal{E}_{i} \mathcal{E}_{i}^{*} \otimes \left(\gamma_{i} \gamma_{i}^{*} + q^{-2} \gamma_{i}^{*} \gamma_{i} - q^{-1} (q - q^{-1}) \sum_{j = 1}^{i - 1} q^{j - i} \gamma_{j}^{*} \gamma_{j}\right). \qedhere
\]
\end{proof}

To proceed we will need the commutation relations in the quantum Clifford algebra. These depend on the choice of the pairings and the Hermitian inner products, that is on the numbers $\{\lambda_{k}\}_{k = 0}^N$ and $\{\lambda^{\prime}_{k}\}_{k = 0}^N$ introduced in \cref{not:dual-pair} and \cref{not:herm-prod}.
More precisely they will appear only through ratios of the parameters $c_{k} = |\lambda_{k}|^{2} / \lambda_{k}^{\prime}$, as defined in \cref{cor:cliff-rescaling}.
The reason for this is that $D^{2}$ contains only certain quadratic elements in the generators.

Before getting into the computation we observe a few things. Recall that $\{ x_{i} \}_{i = 1}^N$ denotes the basis of $\mathfrak{u}_{+}$ which we identify with the quantum root vectors $\{ E_{\xi_{i}} \}_{i = 1}^N$. Denote by $\{ y_{i} \}_{i = 1}^N$ the dual basis of $\mathfrak{u}_{-}$ with respect to the dual pairing. These appear in the definition of the Dolbeault–Dirac operator.
In \cref{prop:orthonorm-basis} we have seen that the orthonormal basis $\{ e_{i} \}_{i = 1}^N$ can be related to the quantum root vectors by $e_{i} = q^{-i/2} E_{\xi_{i}}$. Then, denoting by $\{ f_{i} \}_{i = 1}^N$ the dual basis, we find $f_{i} = q^{i/2} y_{i}$. Hence we have the relations $\gamma_{-}(y_{i}) = q^{-i/2} \gamma_{-}(f_{i})$.

In the following we will use the short-hand notation $\gamma_{i} = \gamma_{-}(f_{i})$.

\begin{lemma}
We have $D_{O}^{2} \sim 0$ if and only if $\frac{c_{k + 1}}{c_k} = \frac{c_k}{c_{k - 1}} q^{-2}$ holds for $1 \leq k \leq N - 1$.
\end{lemma}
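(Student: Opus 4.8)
The plan is to reduce $D_O^2$ to a single scalar multiple of a fixed Clifford operator in each exterior degree, and then to argue that this scalar must vanish. I would start from the expression for $D_O^2$ in \cref{lem:square-d}, where the Clifford factors are written in terms of $\gamma_-(y_i)$. Using the relation $\gamma_-(y_i) = q^{-i/2} \gamma_i$ established just above the statement (with the short-hand $\gamma_i = \gamma_-(f_i)$), together with $\gamma_-(y_i)^* = q^{-i/2} \gamma_i^*$ since $q^{-i/2}$ is real, this becomes
$$D_O^2 \sim \sum_{i \neq j} q^{-(i+j)/2}\, \mathcal{E}_i \mathcal{E}_j^* \otimes \left( \gamma_i \gamma_j^* + q^{-1} \gamma_j^* \gamma_i \right),$$
so that only the scalar factors $q^{-(i+j)/2}$ are introduced by the change of basis.

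Next I would work in a fixed degree $k$ and invoke \cref{cor:cliff-rescaling}, which gives $\gamma_i \gamma_j^* = \frac{c_{k+1}}{c_k} \intm{i} \extm{j}$ and $\gamma_j^* \gamma_i = \frac{c_k}{c_{k-1}} \extm{j} \intm{i}$ on $\Lambda_q^k(\mathfrak{u}_+)$. The commutation relation for $i \neq j$ (the proposition preceding \cref{prop:cliff-iej}), in the form $\extm{j} \intm{i} = -q^{-1} \intm{i} \extm{j}$, lets me collect both terms onto the single operator $\intm{i} \extm{j}$, producing
$$\gamma_i \gamma_j^* + q^{-1} \gamma_j^* \gamma_i = \left( \frac{c_{k+1}}{c_k} - q^{-2} \frac{c_k}{c_{k-1}} \right) \intm{i} \extm{j} \quad \text{on } \Lambda_q^k(\mathfrak{u}_+).$$
The crucial feature is that the resulting scalar depends only on $k$, not on $(i,j)$, so on each degree $D_O^2$ reduces to $\left( \frac{c_{k+1}}{c_k} - q^{-2} \frac{c_k}{c_{k-1}} \right) \Theta_k$, where $\Theta_k = \sum_{i \neq j} q^{-(i+j)/2} \mathcal{E}_i \mathcal{E}_j^* \otimes \intm{i} \extm{j}$.

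From here the ``if'' direction is immediate: the stated relations make every scalar vanish, so $D_O^2 \sim 0$. For the ``only if'' direction I would use that $\mathrm{End}(\Lambda_q(\mathfrak{u}_+))$ is graded by degree and that $\Uql$ preserves this grading, so $D_O^2 \sim 0$ forces each degree-$k$ component into $\Uql \otimes \mathrm{End}$. I would then observe that $\intm{i} \extm{j}$ is a nonzero ``replace $i$ by $j$'' operator on $\Lambda_q^k(\mathfrak{u}_+)$ precisely for $1 \leq k \leq N-1$: for $i \neq j$ it vanishes identically in degrees $0$ and $N$, which is exactly why only these values of $k$ enter the condition, and for distinct pairs $(i,j)$ these operators are linearly independent. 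Comparing coefficients of one such operator then shows that $\left( \frac{c_{k+1}}{c_k} - q^{-2} \frac{c_k}{c_{k-1}} \right) \mathcal{E}_i \mathcal{E}_j^*$ must lie in $\Uql$.

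The main obstacle is precisely this last step: concluding that it is the scalar, and not the coefficient, that vanishes, which amounts to showing $\mathcal{E}_i \mathcal{E}_j^* \notin \Uql$ for $i \neq j$. A weight argument alone will not suffice, since $\mathcal{E}_i \mathcal{E}_j^*$ is homogeneous of weight $\xi_i - \xi_j$, whose $\alpha_N$-coefficient is zero. Instead I would appeal to the triangular (PBW) decomposition: $\mathcal{E}_i = S^{-1}(E_{\xi_i})$ has positive part proportional to the radical-root vector $E_{\xi_i}$, while $\mathcal{E}_j^* = S^{-1}(E_{\xi_j}^*)$ has negative part proportional to $F_{\xi_j}$, both genuinely involving the excluded simple root $\alpha_N$; reordering $\mathcal{E}_i \mathcal{E}_j^*$ into PBW form leaves a nonzero monomial built from radical-root generators, which lies outside $\Uql$. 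Carrying this out cleanly, while controlling the lower-order correction terms produced in the reordering, is the delicate part of the argument.
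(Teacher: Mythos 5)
Your reduction of $D_O^2$ is identical to the paper's: the same substitution $\gamma_-(y_i)=q^{-i/2}\gamma_i$, the same use of \cref{cor:cliff-rescaling} degree by degree, and the same collection of both terms onto $\intm{i}\extm{j}$ via $\extm{j}\intm{i}=-q^{-1}\intm{i}\extm{j}$, yielding the scalar $\frac{c_{k+1}}{c_k}-q^{-2}\frac{c_k}{c_{k-1}}$ in each degree $1\leq k\leq N-1$ and automatic vanishing in degrees $0$ and $N$. Where you differ is that you go on to address the ``only if'' direction, which the paper's proof in fact does not: it stops at ``this term cancels when the claimed condition holds.'' Your outline for the converse is sound. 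The linear independence of the operators $\intm{i}\extm{j}$ on $\Lambda_q^k(\mathfrak{u}_+)$ for $1\leq k\leq N-1$ follows because distinct ordered pairs $(i,j)$ produce matrices with disjoint supports (the pair is recovered from the symmetric difference of the source and target multi-indices), and you are right that the delicate point is $\mathcal{E}_i\mathcal{E}_j^*\notin\Uql$, for which a weight argument fails since $\xi_i-\xi_j$ has vanishing $\alpha_N$-coefficient. The triangular-decomposition argument you sketch does close this: $\mathcal{E}_i\in U^{\geq 0}$ with positive part of weight $\xi_i$ and $\mathcal{E}_j^*\in U^{\leq 0}$, so under the isomorphism $U^-\otimes U^0\otimes U^+\cong U_q(\mathfrak{g})$ the component of $\mathcal{E}_i\mathcal{E}_j^*$ with $U^+$-part of weight $\xi_i$ is exactly the reordered leading term and is nonzero (commuting $E$'s past $F$'s only produces terms of strictly smaller height on both sides), whereas every PBW component of an element of $\Uql$ has positive part of weight supported on $S=\Pi\setminus\{\alpha_N\}$. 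So your argument is complete in outline and supplies a converse that the paper leaves implicit.
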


\begin{proof}
We have previously shown that
$$
D_{O}^{2} \sim \sum_{i \neq j} \mathcal{E}_{i} \mathcal{E}_{j}^{*} \otimes (\gamma_{-}(y_{i}) \gamma_{-}(y_{j})^{*} + q^{-1} \gamma_{-}(y_{j})^{*} \gamma_{-}(y_{i})).
$$
First we replace $\gamma_{-}(y_{i})$ with $q^{-i/2} \gamma_{i}$, so that
$$
D_{O}^{2} \sim \sum_{i \neq j} q^{-(i + j)/2} \mathcal{E}_{i} \mathcal{E}_{j}^{*} \otimes (\gamma_{i} \gamma_{j}^{*} + q^{-1} \gamma_{j}^{*} \gamma_{i}).
$$
Now we will concentrate on the second factor, that is $\gamma_i \gamma_j^* + q^{-1} \gamma_j^* \gamma_i$. Since $i \neq j$ it is clear that this is zero when acting on elements of degree $0$ or $N$. We consider its action on elements of degree $1 \leq k \leq N - 1$. Using the identities in \cref{cor:cliff-rescaling} we get
$$
\gamma_{i} \gamma_{j}^{*} + q^{-1} \gamma_{j}^{*} \gamma_{i}
= \frac{c_{k + 1}}{c_{k}} \intm{i} \extm{j} + \frac{c_{k}}{c_{k - 1}} q^{-1} \extm{j} \intm{i}.
$$
Recall that we have the identity $\extm{i} \intm{j} = - q^{-1} \intm{j} \extm{i}$ for $i \neq j$. Plugging this in we get
$$
\gamma_{i} \gamma_{j}^{*} + q^{-1} \gamma_{j}^{*} \gamma_{i}
= \left( \frac{c_{k + 1}}{c_{k}} - \frac{c_{k}}{c_{k - 1}} q^{-2} \right) \intm{i} \extm{j}.
$$
Therefore we see that this term cancels when the claimed condition holds.
\end{proof}

Now we will show that, under the same assumption on the coefficients $\{c_{k}\}_{k = 0}^N$, also the diagonal part of $D^{2}$ simplifies.
During the computation we will introduce a certain element $C \in U_q(\mathfrak{sl}_{N + 1})$.
This is related to a central element $\mathcal{C}_q \in U_q(\mathfrak{sl}_{N + 1})$ which reduces to the quadratic Casimir in the classical limit, as we will show in \cref{sec:casimir}.
Therefore the next result can be seen as a generalization of the classical formula for $D^{2}$.

\begin{theorem}
\label{thm:dsquare}
Suppose that the condition $\frac{c_{k + 1}}{c_{k}} = \frac{c_{k}}{c_{k - 1}} q^{-2}$ holds for $1 \leq k \leq N - 1$. Then we have $D^{2} \sim C \otimes T$, where $C = \sum_{i = 1}^{N} q^{-i} \mathcal{E}_{i} \mathcal{E}_{i}^{*} \in U_q(\mathfrak{sl}_{N + 1})$ and $T \in \mathrm{End}(\Lambdaq)$ is defined by
\[
T \restriction_{\Lambdak} = \begin{cases}
\frac{c_1}{c_0} q^{-2k} & k \leq 2, \\
q^{-2k} & k > 2.
\end{cases}
\]
\end{theorem}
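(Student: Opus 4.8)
\section*{Proof proposal}

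The plan is to start from the decomposition $D^{2} = D^{2}_{D} + D^{2}_{O}$ of \cref{lem:square-d} and exploit the hypothesis directly. Since the assumed relation $\frac{c_{k+1}}{c_{k}} = \frac{c_{k}}{c_{k-1}} q^{-2}$ is exactly the condition appearing in the preceding lemma, we immediately obtain $D^{2}_{O} \sim 0$, so that $D^{2} \sim D^{2}_{D}$ and only the diagonal part must be analysed. First I would rewrite $D^{2}_{D}$ using the relation $\gamma_{-}(y_{i}) = q^{-i/2}\gamma_{-}(f_{i})$ coming from \cref{prop:orthonorm-basis}. Since $q^{-i/2}$ is real, each quadratic monomial $\gamma_{-}(y_{i})\gamma_{-}(y_{i})^{*}$ and $\gamma_{-}(y_{i})^{*}\gamma_{-}(y_{i})$ acquires a factor $q^{-i}$, and using $q^{j-i}q^{-j} = q^{-i}$ the terms $q^{j-i}\gamma_{-}(y_{j})^{*}\gamma_{-}(y_{j})$ acquire the same factor $q^{-i}$. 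Writing $\gamma_{i} = \gamma_{-}(f_{i})$, this pulls $q^{-i}$ out of the $i$-th summand, giving
\[
D^{2}_{D} \sim \sum_{i=1}^{N} q^{-i}\mathcal{E}_{i}\mathcal{E}_{i}^{*} \otimes \Big(\gamma_{i}\gamma_{i}^{*} + q^{-2}\gamma_{i}^{*}\gamma_{i} - q^{-1}(q-q^{-1})\sum_{j=1}^{i-1}\gamma_{j}^{*}\gamma_{j}\Big),
\]
so that the element $C = \sum_{i} q^{-i}\mathcal{E}_{i}\mathcal{E}_{i}^{*}$ already appears in the first tensor factor.

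The key step is then to show that the Clifford factor is \emph{independent of $i$} on each graded component $\Lambdak$, which is precisely what allows $D^{2}$ to collapse into a single tensor product $C \otimes T$. To this end I would restrict to degree $k$ and convert $\gamma_{i}\gamma_{i}^{*}$, $\gamma_{i}^{*}\gamma_{i}$ and $\gamma_{j}^{*}\gamma_{j}$ into the operators $\intm{}$ and $\extm{}$ via \cref{cor:cliff-rescaling}, picking up the coefficient $\frac{c_{k+1}}{c_{k}}$ for the first and $\frac{c_{k}}{c_{k-1}}$ for the others. The hypothesis $\frac{c_{k+1}}{c_{k}} = q^{-2}\frac{c_{k}}{c_{k-1}}$ makes all three coefficients proportional, so I can factor out the common scalar $q^{-2}\frac{c_{k}}{c_{k-1}}$ and be left with $\intm{i}\extm{i} + \extm{i}\intm{i} - q(q-q^{-1})\sum_{j=1}^{i-1}\extm{j}\intm{j}$. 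This is exactly the left-hand side of \cref{prop:cliff-iej}, hence equals $\mathrm{id}$. Therefore the Clifford factor acts on $\Lambdak$ as the scalar $q^{-2}\frac{c_{k}}{c_{k-1}}$, with no dependence on $i$.

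Having separated the two tensor slots, it remains to evaluate the scalar. Setting $d_{k} = c_{k}/c_{k-1}$, the hypothesis reads $d_{k+1} = q^{-2}d_{k}$, so $d_{k} = q^{-2(k-1)}\,(c_{1}/c_{0})$ and the scalar on $\Lambdak$ becomes $q^{-2}d_{k} = q^{-2k}(c_{1}/c_{0})$. Reading off these values across all degrees determines the operator $T$, and assembling everything gives $D^{2} \sim \sum_{i} q^{-i}\mathcal{E}_{i}\mathcal{E}_{i}^{*} \otimes T = C \otimes T$, as claimed.

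The main obstacle, and the conceptual heart of the argument, is the passage to a single tensor product: a priori the diagonal summands couple the $\Uqg$-factor $\mathcal{E}_{i}\mathcal{E}_{i}^{*}$ to an $i$-dependent Clifford operator, and there is no reason for the two to decouple. It is only the precise matching of the rescaling coefficients $\frac{c_{k+1}}{c_{k}}$ and $\frac{c_{k}}{c_{k-1}}$, enforced by the hypothesis, that turns the Clifford factor into the $i$-independent expression governed by \cref{prop:cliff-iej}. I would also treat the boundary degrees $k=0$ and $k=N$ with some care, since there one of the terms $\intm{i}\extm{i}$ or $\extm{i}\intm{i}$ annihilates the space: in those cases the (undefined) coefficients $c_{-1}$ or $c_{N+1}$ never enter, and \cref{prop:cliff-iej} still applies, so that the scalar defining $T$ on each $\Lambdak$ is obtained consistently.
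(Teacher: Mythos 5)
Your proposal follows essentially the same route as the paper: the decomposition from \cref{lem:square-d}, the vanishing of $D_O^2$ under the hypothesis, the substitution $\gamma_-(y_i) = q^{-i/2}\gamma_-(f_i)$ to pull out $q^{-i}$, the conversion to $\intm{}$, $\extm{}$ via \cref{cor:cliff-rescaling}, and the collapse of the $i$-dependence through \cref{prop:cliff-iej}. (The paper isolates the coefficient of $\intm{i}\extm{i}$ and shows it vanishes, while you factor out the common scalar $q^{-2}c_k/c_{k-1}$; these are the same computation.) Your handling of the boundary degrees $k=0$ and $k=N$ is also consistent with the paper's.

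The one point where you diverge is the final evaluation of the scalar, and there your answer is actually the correct one while the printed statement is not. Solving $d_{k+1}=q^{-2}d_k$ with $d_k=c_k/c_{k-1}$ gives $d_k=\frac{c_1}{c_0}q^{-2(k-1)}$ for all $1\le k\le N$, hence $T\restriction_{\Lambdak}=q^{-2}d_k=\frac{c_1}{c_0}q^{-2k}$ \emph{uniformly in $k$} (including $k=0$, where the scalar is $d_1=\frac{c_1}{c_0}$). The paper instead claims $c_k=\frac{c_1^2}{c_0}q^{-k(k-1)}$ for $2\le k\le N$, whereas the recursion $c_{k+1}=\frac{c_k^2}{c_{k-1}}q^{-2}$ actually gives $c_k=c_0\left(\frac{c_1}{c_0}\right)^k q^{-k(k-1)}$; one checks already that $c_3=\frac{c_1^3}{c_0^2}q^{-6}\ne\frac{c_1^2}{c_0}q^{-6}$ unless $c_0=c_1$. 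Consequently the factor $\frac{c_1}{c_0}$ in $\frac{c_k}{c_{k-1}}$ does not cancel for $k>2$, and the case distinction at $k=2$ in the statement of \cref{thm:dsquare} should not be there: the correct formula is $T\restriction_{\Lambdak}=\frac{c_1}{c_0}q^{-2k}$ for all $k$. This discrepancy is harmless for \cref{thm:dirac-spinor}, whose conclusion is drawn under the additional assumption $c_0=c_1$, in which case the two formulas agree; but your derivation, not the printed one, is the one to keep.
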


\begin{proof}
We have just shown that $D^{2}_{O} \sim 0$.
Plugging $\gamma_{-}(y_{i}) = q^{-i/2} \gamma_{i}$ into \cref{lem:square-d} we obtain
\[
D^{2}_{D} \sim \sum_{i = 1}^{N} q^{-i} \mathcal{E}_{i} \mathcal{E}_{i}^{*} \otimes \left(\gamma_{i} \gamma_{i}^{*} + q^{-2} \gamma_{i}^{*} \gamma_{i} - q^{-1} (q - q^{-1}) \sum_{j = 1}^{i - 1} \gamma_{j}^{*} \gamma_{j}\right).
\]
We concentrate on the terms appearing in the second factor, which we denote by $A_{i}$. We act with these operators  on elements of degree $k$. Using the identities in \cref{cor:cliff-rescaling} we find
$$
A_{i} = \frac{c_{k + 1}}{c_{k}} \intm{i} \extm{i} + \frac{c_{k}}{c_{k - 1}} q^{-2} \left( \extm{i} \intm{i} - q (q - q^{-1}) \sum_{j = 1}^{i - 1} \extm{j} \intm{j} \right).
$$
In degree $k = 0$ we have $\intm{i} \extm{i} = \mathrm{id}$ and $\extm{i} \intm{i} = 0$, hence $A_i$ acts as multiplication by $\frac{c_1}{c_0}$.
Now consider the case $k > 0$ and recall that from \cref{prop:cliff-iej} we have the relation
$$
\extm{i} \intm{i} - q(q - q^{-1}) \sum_{j = 1}^{i - 1} \extm{j} \intm{j} + \intm{i} \extm{i} = \mathrm{id}.
$$
Plugging this into the previous equation we get
$$
A_{i} = \left( \frac{c_{k + 1}}{c_{k}} - \frac{c_{k}}{c_{k - 1}} q^{-2} \right) \intm{i} \extm{i} + \frac{c_{k}}{c_{k - 1}} q^{-2} \mathrm{id}.
$$
The first term is zero for $1 \leq k \leq N - 1$ under our assumption on the coefficients $\{c_{k}\}_{k = 0}^N$, hence $A_i$ acts as $\frac{c_k}{c_{k - 1}} q^{-2}$.
Similarly in degree $k = N$ we have $\intm{i} \extm{i} = 0$ and $A_i$ acts as $\frac{c_N}{c_{N - 1}} q^{-2}$.
To summarize these results let us define the operator $T: \Lambdak \to \Lambdak$ by
\[
T \restriction_{\Lambdak} = \begin{cases}
\frac{c_1}{c_0} & k = 0, \\
\frac{c_k}{c_{k - 1}} q^{- 2} & k > 0.
\end{cases}
\]
Armed with this definition we can write
\[
D^{2} \sim \sum_{i = 1}^{N} q^{-i} \mathcal{E}_{i} \mathcal{E}_{i}^{*} \otimes T = C \otimes T.
\]
Finally the expression for $T$ can be simplified.
Consider again the condition on the coefficients $\{c_k\}_{k = 0}^N$, which we rewrite as $c_{k + 1} = \frac{c_k^2}{c_{k - 1}} q^{-2}$ for $1 \leq k \leq N - 1$.
From this we easily find that $c_k = \frac{c_1^2}{c_0} q^{-k(k - 1)}$ for $2 \leq k \leq N$, while $c_0$ and $c_1$ are left as free parameters.
Using this result we compute the ratios $\frac{c_2}{c_1} = \frac{c_1}{c_0} q^{-2}$ and $\frac{c_k}{c_{k - 1}} = q^{2 - 2k}$ for $k > 2$.
Plugging this back into the expression for $T$ we obtain the form given in the claim.
\end{proof}

Therefore, under the condition $\frac{c_{k + 1}}{c_{k}} = \frac{c_{k}}{c_{k - 1}} q^{-2}$, we obtain a simple form for the square of the abstract Dolbeault–Dirac operator. Below we make some comments on this condition and the role of the operator $T$ appearing in the statement of the theorem.

\begin{remark}
Consider the classical limit of the previous condition, that is $q \to 1$. This gives $\frac{c_{k + 1}}{c_{k}} = \frac{c_{k}}{c_{k - 1}}$.
Plugging this into \cref{cor:cliff-rescaling} we obtain the identity $\gamma_{i}^{*} \gamma_{j} + \gamma_{i} \gamma_{j}^{*} = \frac{c_{k}}{c_{k - 1}} (\extm{i} \intm{j} + \intm{i} \extm{j})$, when acting in degree $k$.
Using the relations $\extm{i} \intm{j} + \intm{i} \extm{j} = \delta_{ij}$, we find $\gamma_{i}^{*} \gamma_{j} + \gamma_{i} \gamma_{j}^{*} = \delta_{ij} T$.
Therefore the operator $T$ provides a rescaling of these relations in low degrees.
We are free to set $c_{k} = 1$ for all $k$ to recover the usual Clifford algebra relations.
Nevertheless, the condition $\frac{c_{k + 1}}{c_{k}} = \frac{c_{k}}{c_{k - 1}}$ is all we need to get cancellations in the expression for $D^{2}$.
\end{remark}

\begin{remark}
Clearly in the quantum case it is not possible to satisfy $\frac{c_{k + 1}}{c_{k}} = \frac{c_{k}}{c_{k - 1}} q^{-2}$ by setting $c_{k} = 1$ for all $k$. Therefore the classical normalization does not produce a simple result for $D^{2}$.
Observe that the factor $q^{- k (k - 1)}$, which appears in the formula for $c_k$, is essentially the same factor coming from the computation of the dual pairings of \cref{sec:pairing}.
\end{remark}

\subsection{Action on the spinor bundle}

The abstract Dolbeault–Dirac operator $D$ can be made to act on the Hilbert space of square-integrable spinors.
We will follow \cite[Section 6]{qflag} and refer to this paper for the unexplained definitions.
Denote by $\mathbb{C}_q[G]$ the quantized coordinate ring corresponding to $G$. Corresponding to the generalized flag manifold $G / P$ we also have the ring $\mathbb{C}_q[G / P] \subset \mathbb{C}_q[G]$, namely elements which are invariant under the action of the quantized Levi factor $\Uql$.
The corresponding \emph{spinor bundle} is then defined by
\[
\mathcal{S} = \{a \otimes v \in \mathbb{C}_q[G] \otimes \Lambdaq : X \triangleright a \otimes v = a \otimes S(X) \triangleright v, \forall X \in \Uql \}.
\]
More precisely, in the classical situation this linear space corresponds to the space of sections of the spinor bundle.
It can be completed to a Hilbert space using the Haar state on $\mathbb{C}_q[G]$ and the Hermitian inner product on the exterior algebra $\Lambdaq$.
It follows from its definition that the Dolbeault–Dirac operator $D$ is a well-defined operator on this space.

In this setting we obtain a further simplification in our formula for $D^2$.
We will make use of the central element $\mathcal{C}_q \in U_q(\mathfrak{sl}_{N + 1})$, which will be discussed in \cref{sec:casimir}.

\begin{theorem}
\label{thm:dirac-spinor}
We have $D^2 \sim \mathcal{C}_q \otimes \tilde{T}$ as operators on the spinor bundle $\mathcal{S}$, where
\[
\tilde{T} \restriction_{\Lambda_q^k(\mathfrak{u}_{+})} = \begin{cases}
\frac{c_1}{c_0} & k \leq 2, \\
1 & k > 2.
\end{cases}
\]
In particular for $c_0 = c_1$ we have $D^2 \sim \mathcal{C}_q \otimes 1$.
\end{theorem}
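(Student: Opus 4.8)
The plan is to upgrade the abstract formula $D^{2}\sim C\otimes T$ of \cref{thm:dsquare}, with $C=\sum_{i=1}^{N}q^{-i}\mathcal{E}_{i}\mathcal{E}_{i}^{*}$, by exploiting the equivariance built into the spinor bundle $\mathcal{S}$: the degree-dependent part of $T$ will be transported from the $\Lambdaq$-leg onto the $\mathbb{C}_q[G]$-leg, where it converts $C$ into a central element. The starting point is the factorisation $T\restriction_{\Lambdak}=q^{-2k}\,\tilde T\restriction_{\Lambdak}$, which is immediate upon comparing the two piecewise definitions. Hence it suffices to absorb the operator acting as $q^{-2k}$ on $\Lambdak$ into the first tensor factor.

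First I would realise this degree operator as the action of a group-like element. A basis vector $e_{\underline i}\in\Lambdak$ has $\Uql$-weight $\sum_{r=1}^{k}\xi_{i_{r}}$, and since each radical root $\xi_i=\sum_{a=i}^{N}\alpha_a$ contains $\alpha_N$ with multiplicity one, pairing with $\omega_N$ gives $(\omega_N,\xi_i)=1$ for every $i$. Therefore $\kappa=K_{-2\omega_N}$ acts on $\Lambdak$ exactly as multiplication by $q^{-2k}$, so that $1\otimes T=(1\otimes\kappa)(1\otimes\tilde T)$. Since $-2\omega_N$ is orthogonal to the Levi roots $\alpha_1,\dots,\alpha_{N-1}$, the element $\kappa$ is central in $\Uql$; as a consequence $\kappa\otimes1$, $1\otimes\kappa$, and the degree-scalar operator $1\otimes\tilde T$ all preserve $\mathcal{S}$.

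The key step is the transfer across the two legs. Applying the defining relation $X\triangleright a\otimes v=a\otimes S(X)\triangleright v$ of $\mathcal S$ with $X=\kappa$ and using $S(\kappa)=\kappa^{-1}=K_{2\omega_N}$ yields the operator identity $1\otimes\kappa=\kappa^{-1}\otimes1$ on $\mathcal{S}$. Writing $D^2\sim(C\otimes1)(1\otimes\kappa)(1\otimes\tilde T)$, letting the rightmost factor act first (it maps $\mathcal S$ to $\mathcal S$), replacing $1\otimes\kappa$ by $\kappa^{-1}\otimes1$ on the resulting spinor, and using that the action of $\Uqg$ on $\mathbb{C}_q[G]$ composes as an algebra action, I obtain
\[
D^2\sim (C\kappa^{-1})\otimes\tilde T=\Big(\sum_{i=1}^{N}q^{-i}\mathcal{E}_i\mathcal{E}_i^{*}\,K_{2\omega_N}\Big)\otimes\tilde T
\]
as operators on $\mathcal S$. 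It then remains to identify $C K_{2\omega_N}$, modulo $\Uql$, with the central element $\mathcal{C}_q$, which is exactly what \cref{sec:casimir} is designed to provide. The final assertion for $c_0=c_1$ is immediate, since then $\tilde T\restriction_{\Lambdak}=1$ for all $k$.

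The main obstacle I anticipate is this last identification $C K_{2\omega_N}\sim\mathcal{C}_q$: although $C$ itself is not central, the claim is that a specific $K$-twist of it agrees with a genuine quantum Casimir up to $\Uql$-terms, and establishing this needs the explicit form of $\mathcal{C}_q$ together with the verification that all the discrepancy terms land in $\Uql$. A secondary, more technical point is that $2\omega_N$ need not lie in the root lattice of $\mathfrak{sl}_{N+1}$, so $K_{2\omega_N}$ is not literally a generator of $\Uqg$; this is harmless, since only its well-defined action on the integrable modules occurring in $\mathbb{C}_q[G]$ and $\Lambdaq$ is used, but one should record that the $\Uql$-equivariance defining $\mathcal S$ extends compatibly to this group-like element.
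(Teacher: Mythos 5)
Your argument is correct and is essentially the paper's own proof read in the opposite direction: the paper substitutes $C \sim \mathcal{C}_q K_{\omega_N}^{-2}$ first and then uses the equivariance of $\mathcal{S}$ to move $K_{\omega_N}^{-2}$ onto the exterior-algebra leg, where it acts as $q^{2k}$ and converts $T$ into $\tilde{T}$, whereas you factor out the degree operator $q^{-2k}=K_{\omega_N}^{-2}$ from $T$, move it onto the coordinate-ring leg, and then invoke the appendix identity $CK_{\omega_N}^{2}\sim\mathcal{C}_q$. The ingredients (the formula $D^2\sim C\otimes T$, the equivariance relation for group-like elements, the fact that $K_{\omega_N}$ acts as $q^{k}$ on $\Lambda^k_q(\mathfrak{u}_+)$ since $(\omega_N,\xi_i)=1$, and the appendix computation of $\mathcal{C}_q$) are identical, so this is the same proof.
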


\begin{proof}
From \cref{thm:dsquare} we have that $D^2 \sim C \otimes T$. Moreover it is shown in \cref{sec:casimir} that $C \sim \mathcal{C}_q K_{\omega_N}^{-2}$, where $\mathcal{C}_q \in U_q(\mathfrak{sl}_{N + 1})$ is a central element that reduces to the quadratic Casimir in the classical limit.
Here $K_{\omega_N}$ is a central element in $\Uql$, corresponding to the fundamental weight $\omega_N$ (recall that for projective spaces we remove the root $\alpha_N$).

Now for any section $a \otimes v \in \mathcal{S}$ we have $K_\lambda \triangleright a \otimes v = a \otimes K_\lambda^{-1} \triangleright v$, for any Cartan element $K_\lambda$.
Therefore we obtain the relation $D^2 \sim \mathcal{C}_q \otimes K_{\omega_N}^2 T$ as operators on the Hilbert space $\mathcal{S}$.
Next we look at the action of $\tilde{T} = K_{\omega_N}^2 T$ on $\Lambdak$.
Since $K_{\omega_N}$ is a central element in $\Uql$ it acts as a multiple of the identity on $\Uql$-modules.
This number is $q$ on $\mathfrak{u}_+$, see \cite[Section 3.1]{qflag2}, and hence $q^k$ on $\Lambdak$. Observe that for $k = 0$ we have the trivial $\Uql$-module.
Plugging into the expression of $T$ given in \cref{thm:dsquare} we obtain
\[
K_{\omega_N}^2 T \restriction_{\Lambda_q^k(\mathfrak{u}_{+})} = \begin{cases}
\frac{c_1}{c_0} & k \leq 2, \\
1 & k > 2.
\end{cases}
\]
Therefore if we set $c_0 = c_1$ we have that $\tilde{T} = K_{\omega_N}^2 T$ acts as the identity operator.
\end{proof}

\begin{remark}
The condition $c_0 = c_1$ appearing in the above result is a very natural one. In particular it can be obtained by setting $\lambda_0 = \lambda_0^\prime = 1$ and $\lambda_1 = \lambda_1^\prime = 1$.
The conditions $\lambda_0 = \lambda_0^\prime = 1$ in degree zero mean that $\langle 1, 1 \rangle = (1, 1) = 1$, which are standard normalization conditions.
On the other hand the condition $\lambda_1 = 1$ means that $\{f_i\}_{i = 1}^N$ and $\{e_i\}_{i = 1}^N$ are dual bases with respect to the dual pairing, while the condition $\lambda_1^\prime = 1$ means that $\{e_i\}_{i = 1}^N$ is an orthonormal basis with respect to the Hermitian inner product.
\end{remark}

\subsection{Compact resolvent}

It is an important requirement in the theory of spectral triples \cite{con-book} that $D$ should have compact resolvent.
For quantum projective spaces this was proven in \cite{dd-proj}.
We can easily give a new proof using the results obtained here.

\begin{theorem}
The Dolbeault–Dirac operator $D$ has compact resolvent.
\end{theorem}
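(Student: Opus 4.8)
The plan is to read off everything from the formula of \cref{thm:dirac-spinor}. Taking the natural normalization $c_0 = c_1$ we have $D^2 \sim \mathcal{C}_q \otimes 1$ on the spinor bundle $\mathcal{S}$, so that $D^2 = \mathcal{C}_q \otimes 1 + K$ with $K \in \Uql \otimes \mathrm{End}(\Lambdaq)$. Since $D = \eth + \eth^*$ is symmetric, having compact resolvent amounts to showing that its self-adjoint closure has purely discrete spectrum accumulating only at $\pm\infty$, with finite multiplicities; equivalently, that the eigenvalues of $D^2$ tend to $+\infty$ and have finite multiplicities. I would obtain this by combining the central character of $\mathcal{C}_q$ with a uniform bound on $K$.

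First I would decompose $\mathcal{S}$. Writing the Peter--Weyl decomposition $\mathbb{C}_q[G] \cong \bigoplus_\Lambda V(\Lambda) \otimes V(\Lambda)^*$ and imposing the $\Uql$-equivariance in the definition of $\mathcal{S}$, one gets $\mathcal{S} = \bigoplus_\Lambda \mathcal{S}_\Lambda$, where $\mathcal{S}_\Lambda$ is $V(\Lambda)$ tensored with the space of $\Uql$-equivariant vectors in $V(\Lambda)^* \otimes \Lambdaq$. Each $\mathcal{S}_\Lambda$ is finite-dimensional, the sum runs over those dominant $\Lambda$ for which this space is non-zero, and since $D$ is expressed through one regular action and commutes with the other regular action of $U_q(\mathfrak{g})$ on $\mathbb{C}_q[G]$, it preserves every block $\mathcal{S}_\Lambda$. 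In particular $D$ is a Hermitian operator on each finite-dimensional block, hence essentially self-adjoint on their algebraic direct sum, and its spectrum is the union of the block spectra.

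Next I would control $D^2$ on each block. As $\mathcal{C}_q$ is central it acts on the summand indexed by $\Lambda$ as a scalar $c_\Lambda$, the value on $\mathcal{C}_q$ of the infinitesimal character of $V(\Lambda)$; by \cref{sec:casimir} this is a $q$-analogue of the quadratic Casimir eigenvalue, so the $c_\Lambda$ are bounded below and only finitely many of them lie below any given bound. The correction $K$, although only defined modulo $\Uql$, acts on $\mathcal{S}$ as a \emph{fixed, uniformly bounded} operator: writing $K = \sum_k Z_k \otimes W_k$ with $Z_k \in \Uql$ and using the defining relation $X \triangleright a \otimes v = a \otimes S(X) \triangleright v$, for any $s \in \mathcal{S}$ one has $(Z_k \otimes 1)s = (1 \otimes S(Z_k))s$, whence $Ks = \bigl(1 \otimes \sum_k W_k\, S(Z_k)\bigr)s$. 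Thus $K$ restricts to $1 \otimes W$ on $\mathcal{S}$ for the single operator $W = \sum_k W_k S(Z_k) \in \mathrm{End}(\Lambdaq)$, which is bounded with norm independent of $\Lambda$. Consequently the spectrum of $D^2$ on $\mathcal{S}_\Lambda$ is contained in $[c_\Lambda - \|W\|, c_\Lambda + \|W\|]$.

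Putting these together, any eigenvalue $\mu$ of $D$ with $|\mu| \le R$ satisfies $c_\Lambda \le R^2 + \|W\|$ on the block where its eigenvector lives, so only finitely many blocks contribute and each is finite-dimensional; hence $D$ has only finitely many eigenvalues (with multiplicity) in any bounded interval, which is precisely the compact resolvent property. The step I expect to require the most care is the passage from the purely algebraic relation $D^2 \sim \mathcal{C}_q \otimes 1$, valid only modulo $\Uql$, to an honest statement about the spectrum on the Hilbert space: this rests on the observation above that the $\Uql$-ambiguity collapses, via the equivariance defining $\mathcal{S}$, onto the fixed finite-dimensional exterior algebra $\Lambdaq$ and is therefore a uniformly bounded perturbation, together with the input from \cref{sec:casimir} that the central eigenvalues $c_\Lambda$ actually grow without bound.
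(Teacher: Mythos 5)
Your proposal is correct and follows essentially the same route as the paper: both reduce to the formula $D^2 \sim \mathcal{C}_q \otimes 1$ of \cref{thm:dirac-spinor}, observe that the $\Uql$-ambiguity acts as a bounded operator because the equivariance defining $\mathcal{S}$ pushes it onto the finite-dimensional space $\Lambdaq$, and conclude from the unbounded growth of the central eigenvalues of $\mathcal{C}_q$. The paper simply invokes the abstract fact that a bounded perturbation of a self-adjoint operator with compact resolvent again has compact resolvent, where you make the same mechanism explicit via the Peter--Weyl block decomposition; the substance is identical.
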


\begin{proof}
First of all recall the following simple fact: if $S$ is a self-adjoint operator with compact resolvent then the same is true for $S + B$, where $B$ is bounded operator.
In our setting this is useful because, thanks to the equivariance condition defining the spinor bundle $\mathcal{S}$, any element of $U_{q}(\mathfrak{l}) \otimes \mathrm{Cl}_{q}$ acts as a bounded operator, since $\Lambdaq$ is finite-dimensional.

Now we make use of the result that $D^2 \sim \mathcal{C}_q \otimes 1$ from \cref{thm:dirac-spinor}. We could as well consider $D^2 \sim \mathcal{C}_q \otimes \tilde{T}$, but it makes no real difference here.
To prove that $D$ has compact resolvent it is enough to show that $\mathcal{C}_q \otimes 1$ has compact resolvent, since $D^2$ and $\mathcal{C}_q \otimes 1$ differ by a bounded operator.
We have that $\mathcal{C}_q$ is central, hence it acts as a multiple of the identity in each irreducible representation.
These values are given in \cite[Proposition 3.3]{dd-proj}.
Since these values grow exponentially we conclude that $\mathcal{C}_q \otimes 1$ has compact resolvent.
\end{proof}

\begin{remark}
By analyzing the growth of the eigenvalues of $\mathcal{C}_q$ it can be seen that the spectral dimension of the corresponding spectral triple is zero, see \cite[Theorem 6.2]{dd-proj}.
On the other hand it is shown in \cite{mat14a} that, by including the modular operator corresponding to the Haar state, one recovers the classical dimension.
See also \cite{mat14b} for a discussion of the more general case of quantized irreducible flag manifolds.
\end{remark}

\appendix

\section{The Casimir element}
\label{sec:casimir}

In this appendix we give some details regarding the element $C = \sum_{i = 1}^{N} q^{-i}\mathcal{E}_{i} \mathcal{E}_{i}^{*}$, which appears in the computation of the square of the Dolbeault–Dirac operator.
It is easy to show, given the previous results, that it commutes with all elements of $U_{q}(\mathfrak{l})$.
On the other hand obtaining a central element in $U_{q}(\mathfrak{g})$ of a similar form requires more work.
For this we will refer to the detailed computation which is given in \cite{dd-proj}.

\begin{proposition}
The element $C = \sum_{i = 1}^N q^{-i} \mathcal{E}_i \mathcal{E}_i^*$ commutes with all the elements of $U_q(\mathfrak{l})$.
\end{proposition}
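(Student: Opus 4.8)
The plan is to prove that $C$ lies in the centralizer of $\Uql$ by computing its commutators with a set of algebra generators, namely $K_i^{\pm 1}$ and $E_j, F_j$ with $j \in S = \{1, \dots, N-1\}$. The key device is to rephrase these commutators in terms of the left adjoint action $X \triangleright Y = X_{(1)} Y S(X_{(2)})$, so that the Leibniz (module-algebra) rule $X \triangleright (YZ) = (X_{(1)} \triangleright Y)(X_{(2)} \triangleright Z)$ becomes available. First I would record the weights: since $\mathcal{E}_i = S^{-1}(E_{\xi_i})$ has weight $-\xi_i$, its adjoint $\mathcal{E}_i^*$ has weight $+\xi_i$, so every summand $\mathcal{E}_i \mathcal{E}_i^*$, and hence $C$, has weight $0$. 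This at once gives $K_i C K_i^{-1} = C$, disposing of the Cartan generators.

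Because $C$ has weight zero, a direct expansion of the coproducts gives $E_j \triangleright C = [E_j, C]$ and $F_j \triangleright C = [F_j, C]\, K_j$. Thus the entire statement reduces to showing $E_j \triangleright C = 0$ and $F_j \triangleright C = 0$ for $j \in S$. For $E_j$ the Leibniz rule yields
\[
E_j \triangleright C = \sum_{i=1}^N q^{-i}\Big( (E_j \triangleright \mathcal{E}_i)\,\mathcal{E}_i^* + (K_j \mathcal{E}_i K_j^{-1})\,(E_j \triangleright \mathcal{E}_i^*) \Big),
\]
so I need the adjoint action of $E_j$ on the elements $\mathcal{E}_i$ and $\mathcal{E}_i^*$. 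These I would deduce from \cref{lem:adj-act} together with the compatibility of the adjoint action with $S^{-1}$ and with $*$ (via $S^{-1}\circ * = *\circ S$ and $S^2(E_{\xi_i}) = q^{(2\rho,\xi_i)}E_{\xi_i}$, exactly as in the proof of \cref{cor:comm-inj}). A weight count shows that $E_j \triangleright \mathcal{E}_i$ is a multiple of $\mathcal{E}_{i+1}$ only when $j=i$, while $E_j \triangleright \mathcal{E}_i^*$ is a multiple of $\mathcal{E}_{i-1}^*$ only when $j=i-1$; all remaining contributions vanish. Substituting into the sum therefore leaves exactly two terms, both proportional to $\mathcal{E}_{j+1}\mathcal{E}_j^*$, and after inserting $K_j \mathcal{E}_{j+1} K_j^{-1} = q\,\mathcal{E}_{j+1}$ and the normalizations $q^{-j}$, $q^{-(j+1)}$ one checks that the two coefficients cancel. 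The computation for $F_j$ is entirely parallel, the two surviving terms now being proportional to $\mathcal{E}_j \mathcal{E}_{j+1}^*$.

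Conceptually this cancellation is not a coincidence. Under the identification of $V = \mathrm{span}\{E_{\xi_i}\}$ with $\mathfrak{u}_+$, the rescaled vectors $\tilde{e}_i = q^{-i/2} E_{\xi_i}$ form an orthonormal basis with respect to which the adjoint action of $\Uql$ is a $*$-representation, as established in \cref{prop:orthonorm-basis}. A short calculation gives $C = \sum_i S^{-1}(\tilde{e}_i)\, S^{-1}(\tilde{e}_i)^*$, which is the image under multiplication of the canonical $\Uql$-invariant copairing $\sum_i \tilde{e}_i \otimes \tilde{e}_i^*$ of $V \otimes \bar V$; unitarity of the representation is precisely what forces the structure constants above to match. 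This viewpoint both motivates the weighting $q^{-i}$ and predicts the exact cancellation.

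I expect the main obstacle to be the exactness of the bookkeeping rather than any conceptual difficulty: one must track the antipode factors $q^{-2(N-i+1)}$, the weight shifts $q^{\pm(\alpha_j,\xi_k)}$, and the normalizations $q^{-i}$ with enough care that the two surviving terms (for each of $E_j$ and $F_j$) cancel \emph{on the nose}, not merely modulo $\Uql$. This is the delicate point, since all the related identities available so far — \cref{cor:comm-inj} and \cref{cor:comm-iej} — hold only up to the relation $\sim$, whereas here the vanishing of $E_j \triangleright C$ and $F_j \triangleright C$ must be an exact identity in $\Uqg$.
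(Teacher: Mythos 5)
Your overall strategy --- characterize centrality via the vanishing of the adjoint action, expand with the Leibniz rule, and let the sum telescope --- is exactly the paper's, but you apply it directly to the factors $\mathcal{E}_i$, and this creates a genuine gap. The step ``deduce the adjoint action of $E_j$ on $\mathcal{E}_i$ and $\mathcal{E}_i^*$ from \cref{lem:adj-act} together with the compatibility of the adjoint action with $S^{-1}$'' does not go through: the left adjoint action is not compatible with $S^{-1}$ in the required sense. One has
\[
S^{-1}(X \triangleright Y) = X_{(2)}\, S^{-1}(Y)\, S^{-1}(X_{(1)}),
\]
which is a \emph{right} adjoint-type action on $S^{-1}(Y)$, not the left adjoint action of $X$ on $S^{-1}(Y)$. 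Consequently $\mathrm{span}\{\mathcal{E}_1, \dots, \mathcal{E}_N\}$ is \emph{not} invariant under the left adjoint action of $U_q(\mathfrak{l})$, and your weight count cannot conclude that $E_j \triangleright \mathcal{E}_i$ is a multiple of some $\mathcal{E}_k$: weight spaces of $U_q(\mathfrak{g})$ are infinite-dimensional, so matching weights prove nothing without invariance of the span. The failure is concrete already for $N = 2$: there $\mathcal{E}_2 = -E_2 K_2^{-1}$ and $\mathcal{E}_1 = (E_1 E_2 - q E_2 E_1) K_1^{-1} K_2^{-1}$, while a direct computation gives $E_1 \triangleright \mathcal{E}_2 = -(E_1 E_2 - E_2 E_1) K_2^{-1}$, which carries the Cartan factor $K_2^{-1}$ rather than $K_1^{-1} K_2^{-1}$ and hence lies outside $\mathrm{span}\{\mathcal{E}_1, \mathcal{E}_2\}$. (There is also a minor slip: $S^{-1}$ preserves weights, so $\mathcal{E}_i$ has weight $+\xi_i$, not $-\xi_i$; this does not affect the weight-zero claim for $C$, but it swaps the indices in your predicted surviving terms.)

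The repair is the detour the paper takes. Run your argument on $\tilde{C} = \sum_{i=1}^N q^{-3i} E_{\xi_i}^* E_{\xi_i}$ instead: its factors are the honest quantum root vectors, so \cref{lem:adj-act} applies verbatim (together with $X \triangleright Y^* = (S(X)^* \triangleright Y)^*$ for the starred factors), the two surviving terms are both proportional to $E_{\xi_{j+1}}^* E_{\xi_j}$, and the weights $q^{-3i}$ make them cancel exactly; self-adjointness $\tilde{C}^* = \tilde{C}$ then handles the generators $F_j$ for free. Finally, since $S^{-1}$ is an anti-automorphism mapping $U_q(\mathfrak{l})$ onto itself, it carries the centralizer of $U_q(\mathfrak{l})$ into itself, and the identity $S(E_{\xi_i}) = q^{-2(N-i+1)} S^{-1}(E_{\xi_i})$ from the proof of \cref{cor:comm-inj} shows that $S^{-1}(\tilde{C})$ equals $C$ up to an overall constant. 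Your closing ``copairing'' heuristic suffers from the same defect: multiplication is indeed equivariant for the adjoint action, but the invariance of $\sum_i q^{-i} \mathcal{E}_i \otimes \mathcal{E}_i^*$ under the left adjoint action is precisely what is not available for the $\mathcal{E}_i$ (it is available for the $E_{\xi_i}$).
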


\begin{proof}
Consider the element $\tilde{C} = \sum_{i = 1}^N q^{-3i} E_{\xi_i}^* E_{\xi_i}$. It clearly commutes with the generators $K_i$ for $i = 1, \cdots, N$.
Now we check that it commutes with $E_j$ for $j = 1,\cdots, N - 1$.
Recall that for a Hopf algebra the condition of $X$ being central is equivalent to $X \triangleright Y = \varepsilon(X) Y$
for all $Y$. Using $X \triangleright (Y Z) = (X_{(1)} \triangleright Y) (X_{(2)}\triangleright Z)$ we get
\[
E_{j} \triangleright (E_{\xi_{i}}^{*} E_{\xi_{i}}) = (E_{j} \triangleright E_{\xi_{i}}^{*}) E_{\xi_{i}} + (K_{j} \triangleright E_{\xi_{i}}^{*}) (E_{j} \triangleright E_{\xi_{i}}).
\]
Using the relation $X \triangleright Y^* = (S(X)^* \triangleright Y)^*$ we can rewrite this expression as
\[
\begin{split}
E_{j} \triangleright (E_{\xi_{i}}^{*} E_{\xi_{i}})
& = -(K_{j} F_{j} K_{j}^{-1} \triangleright E_{\xi_{i}})^{*} E_{\xi_{i}} + (K_{j}^{-1}\triangleright E_{\xi_{i}})^{*} (E_{j} \triangleright E_{\xi_{i}})\\
& = -q^{-2} (F_{j} \triangleright E_{\xi_{i}})^{*} E_{\xi_{i}} + (K_{j}^{-1} \triangleright E_{\xi_{i}})^{*} (E_{j}\triangleright E_{\xi_{i}}).
\end{split}
\]
From \cref{lem:adj-act} we have $E_{j} \triangleright E_{\xi_{i}} = -\delta_{j, i - 1} E_{\xi_{i - 1}}$
and $F_{j} \triangleright E_{\xi_{i}} = -\delta_{j, i} E_{\xi_{i + 1}}$. Then
\[
E_{j} \triangleright (E_{\xi_{i}}^{*} E_{\xi_{i}}) = \delta_{j, i} q^{-2} E_{\xi_{i + 1}}^{*} E_{\xi_{i}} - \delta_{j,i - 1} (K_{j}^{-1} \triangleright E_{\xi_{i}})^{*} E_{\xi_{i-1}}.
\]
We have the identity $K_{i - 1}^{-1} \triangleright E_{\xi_{i}} = q E_{\xi_{i}}$, since this is true $i = N$ and for $i < N$ we have $E_{\xi_{i}} = -E_{i} E_{\xi_{i + 1}} + q^{-1} E_{\xi_{i + 1}}E_{i}$ from \cref{lem:recursion-roots}.
Therefore we obtain
\[
E_{j} \triangleright (E_{\xi_{i}}^{*} E_{\xi_{i}}) = \delta_{j, i} q^{-2} E_{\xi_{i + 1}}^{*} E_{\xi_{i}} - \delta_{j,i - 1} q E_{\xi_{i}}^{*} E_{\xi_{i - 1}}.
\]
Finally we use this result to compute
\[
E_{j} \triangleright \tilde{C}
= \sum_{i = 1}^{N} q^{-3i} E_{j} \triangleright (E_{\xi_{i}}^{*} E_{\xi_{i}})
= q^{-3j} q^{-2} E_{\xi_{j + 1}}^{*} E_{\xi_{j}} - q^{-3(j + 1)} q E_{\xi_{j + 1}}^{*} E_{\xi_{j}} = 0.
\]
The previous computation also implies that $\tilde{C}$ commutes with $F_{j}$ for $j = 1,\cdots, N - 1$. Indeed using the fact that $\tilde{C}^{*} = \tilde{C}$ we have
\[
0 = [E_{j}, \tilde{C}]^{*} = [\tilde{C}^{*}, E_{j}^{*}] = [\tilde{C}, K_{j} F_{j}] = K_{j} [\tilde{C}, F_{j}],
\]
where in the last step we have used the fact that $K_{j}$ commutes with $\tilde{C}$.

Finally to obtain $C$ consider $S^{-1}(\tilde{C})$. This element still commutes with $U_{q}(\mathfrak{l})$. We have
\[
S^{-1}(\tilde{C}) = \sum_{i = 1}^{N} q^{-3i} S^{-1}(E_{\xi_{i}}) S^{-1}(E_{\xi_{i}}^{*}) = \sum_{i = 1}^{N} q^{-3i} S^{-1}(E_{\xi_{i}}) S(E_{\xi_{i}})^{*}.
\]
Now we use the relation $S(E_{\xi_{i}}) = q^{-2(N + 1 - i)} S^{-1}(E_{\xi_i})$ obtained previously.
Then we get $S^{-1}(\tilde{C}) = q^{-2(N + 1)} \sum_{i = 1}^{N} q^{-i} \mathcal{E}_{i} \mathcal{E}_{i}^{*}$, which is equal to $C$ up to an overall constant.
\end{proof}

More generally the previous proof shows that an element of the form $\sum_{i = 1}^N q^{-i}K_{\omega_N}^m \mathcal{E}_i \mathcal{E}_i^* K_{\omega_N}^n$ commutes with all the elements in $U_q(\mathfrak{l})$, since $K_{\omega_N}$ is central in $U_q(\mathfrak{l})$.
Here $\omega_N$ is one of the fundamental weights, which in the simply-laced case are defined by $(\omega_i, \alpha_j) = \delta_{i j}$.

Our goal is to obtain a central element in $U_q(\mathfrak{g})$.
Unfortunately it does not seem possible to proceed as above, since the adjoint action only takes a simple form for elements of $U_q(\mathfrak{l})$.
For this reason we will use some of the results which appear in \cite{dd-proj}.
We should point out that their conventions for $U_q(\mathfrak{g})$ are different from ours. We can obtain their relations in terms of our generators after making the replacements
\[
K_i \to K_i^{-1/2}, \quad E_i \to K_i^{1/2} F_i, \quad F_i \to E_i K_i^{-1/2}.
\]

We will consider the Casimir operator defined in the cited paper, which we denote by $\mathcal{C}_q$. The most useful expression for our purposes is given in \cite[Lemma 6.5]{dd-proj}, where we have $\mathcal{C}_{q} \sim \sum_{i = 1}^N q^{-2i} X_i X_i^*$, up to a rescaling and neglecting terms in $U_{q}(\mathfrak{l})$ as usual.
The elements $\{X_i\}_{i = 1}^N$ are defined in \cite[Lemma 3.13]{dd-proj} and $\ell = N$ in our notation.
It is easy to show by induction that 
$X_i = - q^{-(N - i + 2)/2} K_{\omega_N} S^{-1}(E_{\xi_i})$.
We should also point out that we have $\hat{K} = K_{\omega_N}^{-1}$ in the notation of the cited paper.
Therefore we obtain the identities $X_i = q^{i / 2} K_{\omega_N} \mathcal{E}_i$, up to an overall constant.
From this it follows that
\[
\mathcal{C}_{q} \sim \sum_{i = 1}^N q^{-i} K_{\omega_N} \mathcal{E}_i \mathcal{E}_i^* K_{\omega_N}
= \sum_{i = 1}^N q^{-i} \mathcal{E}_i \mathcal{E}_i^* K_{\omega_N}^2.
\]
The action of the central element $\mathcal{C}_q$ is computed in \cite[Proposition 3.3]{dd-proj}.
Moreover it is shown in the appendix of the cited paper that $\mathcal{C}_q$ reduces to the quadratic Casimir in the classical limit $q \to 1$.
Finally we would like to point that the Dolbeault–Dirac operator in \cite{dd-proj} is defined in terms of the elements $\hat{K} X_i$, hence its definition involves the quantum root vectors, even though this notion is never explicitly introduced in the paper.

\vspace{3mm}

{\footnotesize
\emph{Acknowledgements}. I would like to thank Ulrich Krähmer, Sergey Neshveyev and Lars Tuset for the discussions that led to this paper.
Similarly I thank Réamonn Ó Buachalla for his comments.
Finally I should thank Robert Yuncken for discussions which led to the correction of a mistake in the first version.

The first version of this paper was written during my stay at the University of Oslo, where I was supported by the European Research Council under the European Union's Seventh Framework Programme (FP/2007-2013) / ERC Grant Agreement no. 307663 (P.I.: S. Neshveyev).
}

\bigskip

\end{document}